\documentclass[12pt,reqno]{amsart}
\usepackage[margin=1in]{geometry}
\usepackage{amsmath,amssymb,amsthm,graphicx,amsxtra, setspace}
\usepackage[utf8]{inputenc}
\usepackage{mathrsfs}
\usepackage{hyperref}
\usepackage{xcolor}
\usepackage{upgreek}
\usepackage{mathtools}
\usepackage{xcolor}
\usepackage{cite}
\allowdisplaybreaks

\usepackage[pagewise]{lineno}

\newtheorem{theorem}{Theorem}[section]

\newtheorem{lemma}[theorem]{Lemma}

\newtheorem{remark}[theorem]{Remark}

\let\originalleft\left
\let\originalright\right
\renewcommand{\left}{\mathopen{}\mathclose\bgroup\originalleft}
\renewcommand{\right}{\aftergroup\egroup\originalright}


\renewcommand{\d}{\/\mathrm{d}\/}

\def\w{\textbf{W}^{\varepsilon}_{{\theta}^{\varepsilon}}}

\def\L{\mathbb{L}}

\def\C{\mathrm{C}}
\def\f{\boldsymbol{f}}

\def\X{\mathbb{X}}

\def\v{\boldsymbol{v}}

\def\w{\boldsymbol{w}}

\def\u{\mathrm{U}}

\def\u{\boldsymbol{u}}
\def\H{\mathbb{H}}

\newcommand{\R}{\mathbb{R}}

\renewcommand{\d}{\/\mathrm{d}\/}

\newcommand{\Addresses}{{
		\footnote{
			\noindent \textsuperscript{1,2,3}Department of Mathematics, Indian Institute of Technology Roorkee-IIT Roorkee,
			Haridwar Highway, Roorkee, Uttarakhand 247667, INDIA.\par\nopagebreak

				\noindent 	\textit{e-mail:} \texttt{Pardeep Kumar: pkumar3@ma.iitr.ac.in.}

			\noindent 	\textit{e-mail:} \texttt{Kush Kinra: kkinra@ma.iitr.ac.in.}

			\textit{e-mail:} \texttt{Manil T. Mohan: maniltmohan@ma.iitr.ac.in, maniltmohan@gmail.com.}
			
			\noindent \textsuperscript{*}Corresponding author.
			
			\textit{Key words:} Kelvin-Voigt fluids equation, inverse problem, memory kernel, integral over determination condition, contraction mapping principle. 	
			
			Mathematics Subject Classification (2020): Primary 35R30; Secondary 45K05, 45Q05, 76D03.

}}}

\begin{document}
	
	\title[On the three dimensional Kelvin-Voigt Fluids]{A local in time existence and uniqueness result of an inverse problem for the Kelvin-Voigt fluids
		\Addresses}
	
	\author[P. Kumar, K. Kinra and M. T. Mohan]
	{Pardeep Kumar\textsuperscript{1}, Kush Kinra\textsuperscript{2} and Manil T. Mohan\textsuperscript{3*}}

	\maketitle
	
	\begin{abstract}
		In this paper, we consider an inverse problem for three dimensional viscoelastic fluid flow equations, which arises from the motion of Kelvin-Voigt fluids in bounded domains (a hyperbolic type problem). This inverse problem aims to reconstruct the velocity  and kernel of the memory term simultaneously, from the measurement described as the integral over determination condition. By using the contraction mapping principle in an appropriate space, a local in time existence and uniqueness result for the inverse problem of Kelvin-Voigt fluids are obtained. Furthermore, using similar arguments, a global in time existence and uniqueness results for an inverse problem of Oseen type equations are also achieved.
	\end{abstract}

	\section{Introduction}  \label{sec1}\setcounter{equation}{0}
	First we shall discuss the fundamental facts that we come across while dealing with inverse problems. In the literature for the same, it can be noticed that the  global in time results are hard to comprehend for inverse problems. Not only claiming the uniqueness and stability results, but also establishing the existence of a solution is also a challenging problem.	The general strategy is to obtain global in time existence and uniqueness results	is to find a-priori estimates in appropriate function spaces for the unknowns of the problem with proper assumptions on the data.

	\subsection{The physical model and direct problem} 
	We, in this paper, shall be studying an inverse problem for Kelvin-Voigt fluid flow equations with memory term. The original direct problem model is the so-called viscoelastic fluid flow equations arising from the Kelvin-Voigt model for the non-Newtonian fluid flows, which can be described by the following system of integrodifferential equations (cf. \cite{Mohan,Oskolkov,OS})
	\begin{align}\label{1.1}
	\partial_{t}{\u}-\mu_{1} \partial_{t} \Delta \u-\mu_{0} \Delta \u +(\u\cdot \nabla)\u -\int_{0}^{t}k(t-s)\Delta \u(s) \d s+\nabla p=\f, \  \mbox{ in } \ \mathcal{O}  \times (0,T), 
	\end{align}
	and  the incompressibility condition
	\begin{align}\label{1.2}
	\nabla \cdot \u=0, \   \mbox{ in } \ \mathcal{O}  \times (0,T), 
	\end{align}
	where
	\begin{align*}
	\mu_{1}=\frac{2 \kappa_{2}}{\lambda} \  \mbox{and} \  \mu_{0}=\frac {2}{\lambda}\bigg(\kappa_{1}-\frac{\kappa_{2}}{\lambda}\bigg), 
	\end{align*}
	and all quantities are positive. The quantities $ \lambda$ and $\kappa_{j}, \ j \in \{1,2\}$, denote the relaxation time and retardation time, respectively (cf. \cite{Mohan} for more details). The system \eqref{1.1}-\eqref{1.2} is supplemented with the following initial and boundary conditions:
	\begin{align}
	\u&=\u_{0},  \   \mbox{ in } \ \mathcal{O} \times \{0\},\label{1.3}\\
	\u&=\mathbf{0},   \  \ \mbox{ on } \ \partial \mathcal{O} \times [0,T). \label{1.4}
	\end{align}
	Here, $\mathcal{O}$ is a bounded domain in $\mathbb{R}^{3}$ with smooth boundary $\partial \mathcal{O}$, $T>0$ is a constant, $\u\in\mathbb{R}^3$ denotes the velocity vector, $p\in\mathbb{R}$ is the  pressure of the fluid and $\f$ is the external forcing.  The integral term of \eqref{1.1} with kernel $k$ is the memory or hereditary term, which describes the viscoelastic property of the non-Newtonian fluids. One can consider these equations as 3D Navier-Stokes-Voight equations or Oskolkov’s equations (cf. \cite{VKK}) with a memory term $-\int_{0}^{t}k(t-s)\Delta \u(s) \d s$. The presence of the memory term and $-\mu_{1} \partial_{t} \Delta \u$ makes the system \eqref{1.1}-\eqref{1.4}, a hyperbolic type (cf. \cite{Mohan}). In the literature, some authors have referred the system \eqref{1.1}-\eqref{1.4} as Kelvin-Voigt fluid flow equations with ``fading memory" to differentiate it with Navier-Stokes-Voight equations (\cite{Mohan,Mohan1,Oskolkov}, etc).  
	
	When kernel $k$ is given (for example $k(t)=\gamma e^{-\delta t}$, where $\gamma=\frac{2}{\lambda}\left(\nu-\frac{\kappa_1}{\lambda}+\frac{\kappa_2}{\lambda^2}\right)>0$ and $\delta=\frac{1}{\lambda}>0$, $\nu$ is the viscosity co-efficient), the global solvability results  for the system \eqref{1.1}-\eqref{1.4} in bounded domains are available in the literature, see \cite{Mohan,Oskolkov,OS,OS1,ZT}, etc and the references therein. Recently,  the author in \cite{Mohan} established the existence and uniqueness of weak as well as strong solutions for the kernel $k(t)=\gamma e^{-\delta t}$, using a local monotonicity property of the linear and non-linear operators and a localized version of the Minty Browder technique in bounded and unbounded domains like Poincar\'e domains.  By using an $m$-accretive quantization of the linear and nonlinear operators, the author established  the existence and uniqueness of strong solutions  also.  The asymptotic behavior of Kelvin-Voigt fluid flow equations is studied in \cite{Mohan1} by establishing the existence of a finite Hausdorff and fractal dimensional global attractor. 
	
	\subsection{The inverse problem}	In practical phenomena, the kernel of memory term in \eqref{1.1}  represents the physical property of non-Newtonian fluids, which is difficult to determine in advance. By taking additional restriction on the velocity field $\u$, we can reconstruct $k$, provided the velocity is taken on a suitable subset of the domain $\mathcal{O}$. We assume that such an additional restriction on $\u$ can be represented in integral form, called integral over determination condition, by
	\begin{align}\label{1.5}
	\int_{\mathcal{O}}(I-\mu_{1} \Delta)\varphi(x) \cdot \u(x,t) \d x =r(t), \ \ t \in [0,T],
	\end{align}
	where $r(t)$ is the measurement data representing the average velocity on the domain $\mathcal{O}$ and $\varphi$ is a given function representing the type of device used to measure the velocity. Throughout this paper, external force $\f$ is assumed to be zero.	The precise formulation of our inverse problem is as follows:
	
	\begin{itemize}
		\item  Determine the velocity $\u$ and kernel of memory  $k$ satisfying the system \eqref{1.1}-\eqref{1.5} with initial pressure $p(x,0)=p_{0}$.
	\end{itemize}
	Due to the memory term, the inverse problem of reconstructing the kernel of memory term from \eqref{1.1}-\eqref{1.5} is a system of partial integrodifferential equations. This inverse problem has two types of nonlinearities, first, the convective term $(\u\cdot\nabla)\u$ and second, since $k$ is unknown, the convolution $-\int_{0}^{t}k(t-s)\Delta \u(s) \d s$ is also contains nonlinearity. 
	
	For some scalar parabolic equations with a memory term, the inverse problem of reconstructing the kernel of memory term has been well studied by Colombo et. al. using analytic semigroup theory (see \cite{Colombo1,Colombo2,CG,CG1,CGL,CGL1,CGV,CL,CL1,CL2}, etc). In particular, the results pertaining to both local and global in time existence and uniqueness of solution for an evolution equation with the nonlinearity having desirable growth condition  has been discussed  by Colombo and Guidetti (see \cite{CG}). The authors have  presented a new method, where the convolution term is rewritten as a sum of two linear terms in the unknown. By this way, they were able to successfully handle the nonlinearities of the convolution term and the source term in the proof of global uniqueness. \
	
	For the nonlinear Oskolkov’s system, the inverse problem with unknown source function is studied in \cite{VEF}. Carleman estimate and its application of proving the Lipschitz stability of an inverse problem for the Kelvin-Voigt model is described in \cite{OYMMY}.  Taking the similar path as in \cite{CG}, the hyperbolic type of inverse problems have been investigated by several authors to obtain global in time existence and uniqueness results (see \cite{Colombo, LR, WL}, etc). In \cite{Colombo}, the authors studied an inverse problem for the strongly damped wave equation with memory to obtain the similar results. A global in time existence and uniqueness result using a fixed point theorem for a one-dimensional integrodifferential hyperbolic system, which arises from a simplified model of thermoelasticity, is established in \cite{WL}. The author in  \cite{LR} studied the identification of two locally in time two (smooth)  convolution kernels in a fully hyperbolic phase-field system  coupling two hyperbolic integro-differential equations. Recently, the authors in \cite{JFJN}  proved the local solvability of an inverse problem to the Navier-Stokes equation with memory term (Oldroyd models) using fixed point arguments.\
	
	Similar to the inverse parabolic problem studied in \cite{CG}, the difficulties of our inverse problem also arise due to the memory term and the convective (nonlinear) term in the equation. 
	Existence and uniqueness of the inverse problem plays a significant role, especially when establishing the reconstruction scheme. The basic ideas to prove the results of this work have been adapted from \cite{CG,JFJN}. Using the contraction mapping principle, we obtain the existence and uniqueness of local in time solutions  for our inverse problem of the Kelvin-Voigt fluids. The main hindrance for establishing   the global existence and uniqueness is the presence of convective  term $(\u \cdot \nabla)\u$ in the Kelvin-Voigt fluid equation \eqref{1.1}.  In order to show the effectiveness of the method described in \cite{CG}, we consider a similar inverse problem for Oseen type equations corresponding to Kelvin-Voigt fluids with memory term. First, we obtain local in time existence and uniqueness result for this inverse problem. Then by linearizing the convolution term, that is, by splitting the convolution term into unknown velocity $\v$, and kernel of memory term $k$ and then calculating a-priori estimates for $\v$ and  $k$, we obtain the existence and uniqueness of global in time results for our inverse problem for the Oseen type equations with memory. 
	
	The plan of the paper is as follows: In the next section, we reformulate the inverse problem in a concrete setting by providing the compatibility and regularity conditions on the data and state  some useful technical results. An equivalent form for the inverse problem is formulated in section \ref{sec3}. Using this equivalent form and  contraction mapping principle, the main  result on the local in time existence and uniqueness of solutions is established in  section \ref{sec4} (Theorem \ref{thm1}).  Oseen type equations corresponding to Kelvin-Voigt fluids with memory term is considered in section \ref{sec5}. As in the case of nonlinear problem, firstly,  we  reformulate the inverse problem and then provide an equivalent form of this inverse problem. Using this equivalent problem, we prove the results on local and global in time existence and uniqueness of solutions (Theorems \ref{Thm1}, \ref{Thm2}, \ref{Thm3}).

	\section{The Concrete Version of the Inverse Problem} \label{sec2}\setcounter{equation}{0}
	In this section, we reformulate the inverse problem mentioned in the introduction into a concrete setting. Then we introduce the compatibility and regularity conditions on the data so that we can obtain the well-posedness of the inverse problem. We assume an additional condition on velocity field $\u(\cdot)$ in the form of an integral representation given in  \eqref{1.5}. This additional condition also helps us to determine the kernel of memory term. We discuss the necessary preliminary materials needed to obtain the local in time existence and uniqueness result of our inverse problem in three dimensional bounded domains. 
	
	\subsection{Function spaces}	For any integers $m,p$, let us denote by $\mathbb{W}^{m,p}(\mathcal{O}):=\mathrm{W}^{m,p}(\mathcal{O};\mathbb{R}^{3})$ and $\mathrm{W}^{m,p}(0,T)=\mathrm{W}^{m,p}(0,T;\mathbb{R})$ for the usual Sobolev spaces defined for spatial variable and time variable, respectively. For any Banach space $\mathbb{X}$, the space $ \mathrm{L}^{p}(0,T;\mathbb{X})$ consists all Lebesgue measurable functions $\u \colon [0,T] \to \mathbb{X}$ with $$\|\u\|_{\mathrm{L}^{p}(0,T;\mathbb{X})}:=\left(\int_{0}^{T}\|\u(t)\|_{\mathbb{X}}^{p}\d t\right)^{1/p} <\infty, \  \mbox{for}  \ \ 1\leq p < \infty,$$ and $$\|\u\|_{\mathrm{L}^{\infty}(0,T;\mathbb{X})}:=\operatorname*{ess\,sup}_{0 \leq t \leq T}\|\u(t)\|_{\X} <\infty.$$The Sobolev space $ \mathrm{W}^{m,p}(0,T;\mathbb{X})$ consists of all functions $\u \in \mathrm{L}^{p}(0,T;\mathbb{X})$ such that $\partial_{t}^{\beta}\u$ exists in the weak sense and belongs to $\mathrm{L}^{p}(0,T;\mathbb{X}),$ for all $0 \leq \beta \leq m$. Let us represent	$\mathbb{H}^{m}(\mathcal{O}):=\mathbb{W}^{m,2}(\mathcal{O}),$ $ \mathrm{H}^{m}(0,T):= \mathrm{W}^{m,2}(0,T),$ and $ \mathrm{H}^{m}(0,T;\mathbb{X}):= \mathrm{W}^{m,2}(0,T;\mathbb{X})$. We define $\mathrm{L}^2_0(\mathcal{O}):=\left\{p\in\mathrm{L}^2(\mathcal{O};\R):\int_{\mathcal{O}}p(x)\d x=0\right\}$ and $a \wedge b :=\min\{a,b\}$, where $a, b \in \mathbb{R}$.

	\subsection{The inverse problem}\label{IP1}
	For $T>0$, the inverse problem is to determine $\tau \in (0,T]$,
	\begin{align}\label{2.1}
	\u \in \mathrm{H}^{2}(0,\tau;\mathbb{H}_{0}^{1}(\mathcal{O}) \cap \mathbb{H}^{2}(\mathcal{O}))\ \mbox{ and }\ k \in \mathrm{L}^{2}(0,\tau),
	\end{align}
	such that $(\u,k)$ satisfies the system:
	\begin{subequations}
		\begin{align}
		\nonumber \partial_{t}{\u}-\mu_{1} \partial_{t} \Delta \u-\mu_{0} \Delta \u +(\u\cdot \nabla)\u& -\int_{0}^{t}k(t-s)\Delta \u(s) \d s \\ +\ \nabla p&=\mathbf{0},  \ \  \mbox{ in } \ \mathcal{O} \times (0,\tau),\label{2.2}  \\
		\nabla \cdot \u&=0,  \ \ \mbox{ in } \  \mathcal{O} \times (0,\tau),\label{2.3} \\
		\u&=\mathbf{0},  \ \ \mbox{ on }\  \partial \mathcal{O} \times [0,\tau),\label{2.4}  \\
		\u&=\u_{0}, \ \mbox{ in }  \ \mathcal{O} \times \{0\},\label{2.5} \\
		\int_{\mathcal{O}}(I-\mu_{1} \Delta)\varphi(x) \cdot \u(x,t) \d x&=r(t), \  \ \text { in } \  (0,\tau).\label{2.6}
		\end{align}
	\end{subequations}
	We solve the above inverse problem under the following assumptions on the data: 
	\begin{itemize}
		\item [(A1)] $\u_{0} \in \mathbb{H}_{0}^{1}(\mathcal{O}) \cap \mathbb{H}^{2}(\mathcal{O}), \ \nabla \cdot \u_{0}=0, \ \ \mbox{ in }   \ \mathcal{O}.$
		\item[(A2)] $\varphi \in \mathbb{H}_{0}^{1}(\mathcal{O}) \cap \mathbb{H}^{2}(\mathcal{O}), \  \nabla \cdot \varphi=0, \ \ \mbox{ in }  \ \mathcal{O}.$
		\item[(A3)] $\alpha^{-1}:= \int_{\mathcal{O}} \varphi \cdot \Delta \u_{0} \d x \ne 0.$
		\item[(A4)] $\v_{0}:=(I-\mu_{1} \Delta)^{-1}\big(\mu_{0} \Delta \u_{0}-(\u_{0} \cdot \nabla)\u_{0}- \nabla p_{0}\big)  \in \mathbb{H}_{0}^{1}(\mathcal{O}),$ $\nabla \cdot \v_{0}=0, 
		\	\ \mbox{ in } \ \mathcal{O}, \\ \ p_{0} \in \mathrm{H}^{1}(\mathcal{O}).$
		\item[(A5)] $r \in \mathrm{H}^{2}(0,T), $ with  \begin{align*} \int_{\mathcal{O}}(I-\mu_{1} \Delta)\varphi(x) \cdot \u_{0}(x) \d x&=r(0), \\ \int_{\mathcal{O}}\big(\mu_{0} \Delta \u_{0}(x)-(\u_{0}(x) \cdot \nabla)\u_{0}(x)\big) \cdot \varphi(x) \d x &=r'(0).\end{align*}
	\end{itemize}

	\subsection{Important inequalities and results} The following inequalities and results are used frequently in this paper.

	\begin{lemma}[Gagliardo-Nirenberg interpolation inequality, Theorem 1, \cite{Nirenberg}]\label{G-N2}
		Let $\mathcal{O} \subset \mathbb{R}^{n} $ and $\u \in \mathbb{W}^{m,p}(\mathcal{O}), p \geq 1$ and fix $1\leq p,q \leq \infty$ and a natural number $m$. Suppose also that a real number $\theta$ and a natural number $j$ are such that
		\begin{align}\label{G-Nn} 
		\theta=\left(\frac{j}{n}+\frac{1}{q}-\frac{1}{r}\right)\left(\frac{m}{n}-\frac{1}{p}+\frac{1}{q}\right)^{-1}
		\end{align}
		and $\frac{j}{m} \leq \theta \leq1$. Then for any  $\u \in  \mathbb{W}^{m,p}(\mathcal{O})$, we have
		\begin{align}
		\|\nabla^{j}\u\|_{\mathbb{L}^{r}} \leq C \big\| \u\|_{\mathbb{W}^{m,p}}^{\theta}\|\u\|_{\mathbb{L}^{q}}^{1-\theta},
		\end{align}
		where the constant $C$ depends upon the domain $\mathcal{O},m,n$.
	\end{lemma}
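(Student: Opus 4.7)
The inequality is scale- and translation-invariant, so my plan is a two-step reduction: establish it on $\mathbb{R}^n$ for smooth compactly supported functions, then transfer to the bounded domain $\mathcal{O}$ via an extension operator. First, to identify $\theta$, for $\u \in C^\infty_c(\mathbb{R}^n)$ and $\lambda > 0$ consider the dilation $\u_\lambda(x) := \u(\lambda x)$; a change of variables yields
\begin{equation*}
\|\nabla^j \u_\lambda\|_{\mathbb{L}^r} = \lambda^{j - n/r}\|\nabla^j \u\|_{\mathbb{L}^r}, \qquad \|\u_\lambda\|_{\mathbb{L}^q} = \lambda^{-n/q}\|\u\|_{\mathbb{L}^q},
\end{equation*}
and analogously for the $\mathbb{W}^{m,p}$-norm. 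Requiring the $\lambda$-exponent on the two sides of the conjectured inequality to cancel forces exactly the value \eqref{G-Nn} of $\theta$.

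Next, I would prove the core $\mathbb{R}^n$-estimate by beginning with the one-dimensional base case. For $u \in C^\infty_c(\mathbb{R})$, applying the fundamental theorem of calculus to $|u|^r$ and then H\"older's inequality produces a multiplicative bound of the form $\|u\|_{\mathbb{L}^r}^{r} \leq C\,\|u'\|_{\mathbb{L}^p}\,\|u\|_{\mathbb{L}^q}^{r-1}$, with the exponents dictated by the scaling relation above. Iterating this bound in $m$ and interpolating in $j$ delivers the full one-dimensional inequality. To pass to $n$ dimensions, apply the one-dimensional inequality along each coordinate line, raise to the $r$-th power, integrate over the remaining $n-1$ variables, and combine the resulting factors via Minkowski's integral inequality; density of $C^\infty_c(\mathbb{R}^n)$ in $\mathbb{W}^{m,p}(\mathbb{R}^n)$ then extends the estimate to general $\u \in \mathbb{W}^{m,p}(\mathbb{R}^n)$.

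For the reduction to $\mathcal{O}$, since $\partial\mathcal{O}$ is smooth, Stein's extension theorem provides a bounded linear operator $E:\mathbb{W}^{m,p}(\mathcal{O})\to \mathbb{W}^{m,p}(\mathbb{R}^n)$ that is simultaneously bounded from $\mathbb{L}^q(\mathcal{O})$ into $\mathbb{L}^q(\mathbb{R}^n)$ for every $q$ appearing above, with $(E\u)|_{\mathcal{O}} = \u$. Applying the $\mathbb{R}^n$-estimate to $E\u$ and restricting to $\mathcal{O}$ delivers the lemma, with $C$ absorbing $\|E\|$ and therefore depending on $\mathcal{O}, m, n$. The main obstacle is the one-dimensional multiplicative inequality: one must arrange the integration by parts and the H\"older exponents to cover the whole admissible range $j/m \leq \theta \leq 1$, and the borderline situation where $m - j - n/p$ is a nonnegative integer forces the strict inequality $\theta < 1$ (a logarithmic correction arises at the excluded endpoint). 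Once the 1D bound is secured, the scaling, Minkowski, density, and extension steps are essentially routine.
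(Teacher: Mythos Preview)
The paper does not prove this lemma at all: it is stated with attribution to Nirenberg (Theorem~1 in \cite{Nirenberg}) and then immediately specialized to the two particular parameter choices used later. So there is no ``paper's own proof'' to compare against.

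Your outline is the classical route and is broadly sound as a sketch. A few remarks if you intend to flesh it out. First, the scaling argument only shows that \emph{if} a multiplicative inequality holds then $\theta$ must be given by \eqref{G-Nn}; it does not by itself prove anything, so the content is entirely in the one-dimensional step. Second, in the one-dimensional step the bound you wrote, $\|u\|_{\mathbb{L}^r}^{r} \leq C\,\|u'\|_{\mathbb{L}^p}\,\|u\|_{\mathbb{L}^q}^{r-1}$, is only the case $j=0$, $m=1$; reaching general $j,m$ requires a genuine induction (on $m$, reducing $j$ to $0$ or $m$ at each stage) together with an interpolation argument, and the bookkeeping of exponents is where most of the work lies. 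Third, for the transfer to $\mathcal{O}$, Stein's universal extension operator is indeed simultaneously bounded on all the relevant Sobolev and Lebesgue spaces, so that step is fine; but note that the $\mathbb{R}^n$ inequality uses the homogeneous seminorm $\|\nabla^m \u\|_{\mathbb{L}^p}$ rather than the full $\mathbb{W}^{m,p}$ norm, and on a bounded domain the two differ --- this is why the constant picks up domain dependence and why the inhomogeneous norm appears in the stated version. None of this is a gap in your plan, just places where ``essentially routine'' hides real computations.
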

	Let us take $j=0, m=2$, $r=n=3$ and $p=q=2$ in \eqref{G-Nn} to get $ \theta=\frac{3}{4}$ and
	\begin{align*}
	\|\u\|_{\mathbb{L}^{3}} \leq C \| \u\|_{\mathbb{H}^{2}}^{1/4}\|\u\|_{\mathbb{L}^{2}}^{3/4}.
	\end{align*}
	Now if we consider $j=1, m=2, r=2, n=3$ and $p=q=2$ in \eqref{G-Nn}, then we find $ \theta=\frac{1}{2}$ and
	\begin{align*}
	\|\nabla\u\|_{\mathbb{L}^{2}} \leq C \|\u\|_{\mathbb{H}^{2}}^{1/2}\|\u\|_{\mathbb{L}^{2}}^{1/2}.
	\end{align*}
	\begin{lemma}[Theorem 4.4, \cite{CG}]\label{Lemma1}
		
		Let $\mathbb{X}$ be a Banach space, $p \in (1,\infty), \ \tau \in \mathbb{R}^+$, $ k \in \mathrm{L}^{p}(0,\tau)$, and $f \in \mathrm{L}^{p}(0,\tau;\mathbb{X})$. Then $k \ast f \in \mathrm{L}^{p}(0,\tau;\mathbb{X})$ and
		\begin{align*}  
		\|k \ast f\|_{\mathrm{L}^{p}(0,\tau;\mathbb{X})} \leq \tau^{1-1/p}\|k\|_{\mathrm{L}^{p}(0,\tau)}\|f\|_{\mathrm{L}^{p}(0,\tau;\mathbb{X})},
		\end{align*} 
		where $(k \ast f)(t):= \int_{0}^{t}k(t-s)f(s)\d s.$
	\end{lemma}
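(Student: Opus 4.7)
The plan is to reduce the Bochner-valued convolution estimate to a scalar convolution estimate, and then close with Young's inequality and a crude Hölder bound to absorb the mismatch of integrability indices. The inequality is essentially Young's convolution inequality with the pair $(L^1, L^p, L^p)$, combined with the fact that on the bounded interval $[0,\tau]$ we have $L^p(0,\tau)\hookrightarrow L^1(0,\tau)$ with constant $\tau^{1-1/p}$.

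First I would pass to scalar integrals. Since $f$ is Bochner-measurable and integrable in the required sense, the triangle inequality for the Bochner integral yields, for almost every $t\in (0,\tau)$,
\begin{equation*}
\|(k\ast f)(t)\|_{\mathbb{X}} \;=\; \Bigl\|\int_0^t k(t-s)f(s)\,\d s\Bigr\|_{\mathbb{X}} \;\leq\; \int_0^t |k(t-s)|\,\|f(s)\|_{\mathbb{X}}\,\d s \;=\; \bigl(|k|\ast g\bigr)(t),
\end{equation*}
where $g(s):=\|f(s)\|_{\mathbb{X}}\in \mathrm{L}^p(0,\tau)$. This shows in particular that $t\mapsto (k\ast f)(t)$ is strongly measurable and reduces the problem to estimating $\||k|\ast g\|_{\mathrm{L}^p(0,\tau)}$.

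Next I would apply the classical Young convolution inequality for scalar functions on $(0,\tau)$ (viewed on $\mathbb{R}$ after zero extension) with exponents satisfying $1+\tfrac{1}{p}=\tfrac{1}{1}+\tfrac{1}{p}$:
\begin{equation*}
\bigl\| |k|\ast g \bigr\|_{\mathrm{L}^p(0,\tau)} \;\leq\; \|k\|_{\mathrm{L}^1(0,\tau)}\,\|g\|_{\mathrm{L}^p(0,\tau)} \;=\; \|k\|_{\mathrm{L}^1(0,\tau)}\,\|f\|_{\mathrm{L}^p(0,\tau;\mathbb{X})}.
\end{equation*}
Finally, since $(0,\tau)$ has finite measure $\tau$, Hölder's inequality with conjugate exponents $p$ and $p/(p-1)$ gives
\begin{equation*}
\|k\|_{\mathrm{L}^1(0,\tau)} \;\leq\; \tau^{1-1/p}\,\|k\|_{\mathrm{L}^p(0,\tau)},
\end{equation*}
and chaining the two bounds delivers exactly the claimed estimate.

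There is no genuine obstacle here; the only mildly delicate point is ensuring that the reduction in the first step is valid, i.e.\ that strong measurability of $s\mapsto k(t-s)f(s)$ on $(0,t)$ holds and that the Bochner triangle inequality applies — both of which follow from the standing hypotheses $k\in\mathrm{L}^p(0,\tau)$ and $f\in\mathrm{L}^p(0,\tau;\mathbb{X})$ together with Fubini's theorem to justify that $(k\ast f)(t)$ exists for a.e.\ $t$. As an alternative, if one prefers to avoid invoking Young's inequality in the Bochner setting, the same bound follows by writing $|k(t-s)|=|k(t-s)|^{1/p'}\cdot|k(t-s)|^{1/p}$, applying Hölder in $s$, integrating in $t$, and using Fubini to exchange the order of integration; a direct computation then produces the exponent $\tau^{1-1/p}$.
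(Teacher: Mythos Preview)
Your proof is correct and follows essentially the same approach as the paper: Young's convolution inequality $\|k\ast f\|_{\mathrm{L}^p}\leq \|k\|_{\mathrm{L}^1}\|f\|_{\mathrm{L}^p}$ followed by H\"older's inequality $\|k\|_{\mathrm{L}^1(0,\tau)}\leq \tau^{1-1/p}\|k\|_{\mathrm{L}^p(0,\tau)}$. The paper states these two steps in a single line, while you have added the (entirely appropriate) reduction to the scalar case via the Bochner triangle inequality and some measurability remarks, but the underlying argument is identical.
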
 
	\begin{proof}
		Using Young's inequality for convolution and H\"older's inequality, we have
		\begin{align*}
		\|k \ast f\|_{\mathrm{L}^{p}(0,\tau;\mathbb{X})} &\leq \|k\|_{\mathrm{L}^{1}(0,\tau)}\|f\|_{\mathrm{L}^{p}(0,\tau;\mathbb{X})}\leq  
		\tau^{1-1/p}\|k\|_{\mathrm{L}^{p}(0,\tau)}\|f\|_{\mathrm{L}^{p}(0,\tau;\mathbb{X})},
		\end{align*}
		which completes the proof.
	\end{proof}
	\begin{lemma}[Theorem 4.5, \cite{CG}]\label{Lemma2}
		Let $\mathbb{X}$ be a Banach space, $p \in (1, \infty), \  \tau \in \mathbb{R}^+$, $z \in \mathrm{W}^{1,p}(0,\tau;\mathbb{X})$ with $z(0)=0$. Then 
		\begin{align*}
		\|z\|_{\mathrm{L}^{\infty}(0,\tau;\mathbb{X})} &\leq \tau^{1-1/p}\|\partial_{t}{z}\|_{\mathrm{L}^{p}(0,\tau;\mathbb{X})}, \\
		\|z\|_{\mathrm{L}^{p}(0,\tau;\mathbb{X})} &\leq \tau\|\partial_{t}{z}\|_{\mathrm{L}^{p}(0,\tau;\mathbb{X})}.
		\end{align*}
	\end{lemma}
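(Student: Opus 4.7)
The proof plan relies on nothing more than the fundamental theorem of calculus for Bochner integrals. Since $z\in\mathrm{W}^{1,p}(0,\tau;\mathbb{X})$ with $z(0)=0$, the representation
\begin{align*}
z(t)=\int_0^t \partial_s z(s)\d s
\end{align*}
holds in $\mathbb{X}$ for almost every $t\in[0,\tau]$. This is the one non-computational input, and it is a standard property of Banach-space-valued $\mathrm{W}^{1,p}$-functions.

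For the first inequality, I would bring the $\mathbb{X}$-norm inside the Bochner integral and apply H\"older's inequality on $[0,t]$ with conjugate exponents $p$ and $p/(p-1)$:
\begin{align*}
\|z(t)\|_{\mathbb{X}} \leq \int_0^t \|\partial_s z(s)\|_{\mathbb{X}}\d s \leq t^{1-1/p}\left(\int_0^t\|\partial_s z(s)\|_{\mathbb{X}}^p\d s\right)^{1/p} \leq \tau^{1-1/p}\|\partial_t z\|_{\mathrm{L}^p(0,\tau;\mathbb{X})}.
\end{align*}
Since the right-hand side is independent of $t$, taking the essential supremum in $t\in[0,\tau]$ on the left yields the $\mathrm{L}^\infty$-bound.

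For the second inequality, the cleanest route is to bootstrap from the first one: for almost every $t\in[0,\tau]$ one has $\|z(t)\|_{\mathbb{X}}\leq\|z\|_{\mathrm{L}^\infty(0,\tau;\mathbb{X})}$, so raising to the $p$-th power and integrating on $[0,\tau]$ gives
\begin{align*}
\|z\|_{\mathrm{L}^p(0,\tau;\mathbb{X})} \leq \tau^{1/p}\|z\|_{\mathrm{L}^\infty(0,\tau;\mathbb{X})} \leq \tau^{1/p}\cdot\tau^{1-1/p}\|\partial_t z\|_{\mathrm{L}^p(0,\tau;\mathbb{X})} = \tau\|\partial_t z\|_{\mathrm{L}^p(0,\tau;\mathbb{X})}.
\end{align*}
There is no genuine obstacle here; the entire argument reduces to two applications of H\"older's inequality together with the Bochner-valued fundamental theorem of calculus. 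The only mild care needed is in justifying that exchanging norm and integral and applying H\"older componentwise in $s$ is legitimate for $\mathbb{X}$-valued integrands, which follows from the definition of the Bochner integral.
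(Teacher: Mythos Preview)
Your proof is correct and essentially the same as the paper's: both rest on the representation $z=1\ast\partial_t z$ and H\"older's inequality for the $\mathrm{L}^\infty$-bound. The only cosmetic difference is in the second inequality, where the paper applies Young's inequality for convolutions directly to $1\ast\partial_t z$ (so $\|z\|_{\mathrm{L}^p}\leq\|1\|_{\mathrm{L}^1(0,\tau)}\|\partial_t z\|_{\mathrm{L}^p}=\tau\|\partial_t z\|_{\mathrm{L}^p}$), whereas you bootstrap from the $\mathrm{L}^\infty$-estimate; both routes are one-liners and yield the identical constant.
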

	The proof can be easily concluded from H\"older's inequality and Young's inequality for convolution, using the formula $z=1\ast \partial_{t}z$.
	
	\section{The Equivalent Problem}\label{sec3}\setcounter{equation}{0}
	In this section, we transform the original inverse problem \eqref{2.2}-\eqref{2.6} into an equivalent system, using which we provide the proof of main result in the next section. Assuming the compatibility and regularity conditions (A1)-(A5), such a transformation is possible, and the following theorem gives an equivalent formulation.  
	\begin{theorem}\label{Equi-form}
		Let the assumptions $\emph{(A1)-(A5)}$ hold. Let $(\u,k)$ be a solution of the system \eqref{2.2}-\eqref{2.6} defined up to $T$ such that
		\begin{align}\label{3.1}
		\u \in \mathrm{H}^{2}(0,T;\mathbb{H}_{0}^{1}(\mathcal{O}) \cap \mathbb{H}^{2}(\mathcal{O})) \ \mbox{ and } \ k \in \mathrm{L}^{2}(0,T).
		\end{align}
		Then $\v:=\partial_{t}{\u}$ and $k$ verify the conditions
		\begin{align}\label{3.2}
		\v \in \mathrm{H}^{1}(0,T;\mathbb{H}_{0}^{1}(\mathcal{O}) \cap \mathbb{H}^{2}(\mathcal{O}))\ \mbox{ and }\  k \in \mathrm{L}^{2}(0,T),
		\end{align}
		and solve the system
		\begin{subequations}
			\begin{align}
			\nonumber \partial_{t}{\v}-\mu_{1} \partial_{t} \Delta \v-\mu_{0} \Delta \v- k\Delta \u_{0} -\int_{0}^{t}k(t-s)\Delta \v(s)\d s&+\big(\v\cdot \nabla\big)\left(\u_{0}+\int_{0}^{t}\v(s) \d s\right)\\ +\left(\left(\u_{0}+\int_{0}^{t}\v(s) \d s\right) \cdot \nabla\right)\v+\nabla \partial_{t}p&=\mathbf{0}, \ \  \mbox{ in } \ \mathcal{O} \times (0,T), \label{3.3}\\
			\nabla \cdot \v&=0,  \  \  \mbox{ in }\  \mathcal{O} \times (0,T),\label{3.4} \\
			\v&=\mathbf{0}, \ \  \mbox{ on }\ \partial \mathcal{O} \times [0,T),\label{3.5} \\
			\v&=\v_{0},  \  \mbox{ in } \ \mathcal{O} \times \{0\},\label{3.6}
			\end{align}
		\end{subequations}
		with 
		\begin{equation}\label{3.7}\tag{3.3e}
		\begin{aligned}
		& k(t)=\alpha \bigg\{r''(t)-\mu_{0} \int_{\mathcal{O}}\v \cdot \Delta \varphi \d x -\int_{0}^{t}\int_{\mathcal{O}}k(t-s) \v \cdot\Delta \varphi \d x \d s\\&-\int_{\mathcal{O}}\big((\v\cdot \nabla)\varphi\big) \cdot \left(\u_{0}
		+\int_{0}^{t}\v(s) \d s\right)\d x-\int_{\mathcal{O}}\left[\left(\left(\u_{0}+\int_{0}^{t}\v(s) \d s\right) \cdot \nabla\right) \varphi\right] \cdot \v \d x \bigg\}.
		\end{aligned}
		\end{equation}
		
		On the other hand, under the setting $\u(t):=\u_{0}+\int_{0}^{t}\v(s) \d s$, if $(\v,k)$ satisfies \eqref{3.2} and is a solution to the system \eqref{3.3}-\eqref{3.7}, then considering to the above setting, $(\u,k)$  satisfies \eqref{3.1} and is a solution to the system \eqref{2.2}-\eqref{2.6}. 
	\end{theorem}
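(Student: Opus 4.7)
The strategy is to establish the equivalence in two directions: forward by time-differentiating the system \eqref{2.2}--\eqref{2.6} and extracting $k(t)$ from the over-determination condition, backward by time-integrating \eqref{3.3}--\eqref{3.7} with integration constants pinned down via the compatibility data (A4)--(A5).

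For the forward direction, $\u \in \mathrm{H}^{2}(0,T;\mathbb{H}_{0}^{1}(\mathcal{O}) \cap \mathbb{H}^{2}(\mathcal{O}))$ immediately yields $\v := \partial_{t}\u \in \mathrm{H}^{1}(0,T;\mathbb{H}_{0}^{1}(\mathcal{O}) \cap \mathbb{H}^{2}(\mathcal{O}))$, giving \eqref{3.2}. Differentiating \eqref{2.2} in $t$, the linear terms reproduce their $\v$-analogues directly; the convective term yields $(\v\cdot\nabla)\u + (\u\cdot\nabla)\v$, and substituting $\u(t) = \u_{0} + \int_{0}^{t}\v(s)\d s$ casts it into the form in \eqref{3.3}. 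For the memory term, writing $\Delta\u(s) = \Delta\u_{0} + \int_{0}^{s}\Delta\v(r)\d r$ and setting $K(t) := \int_{0}^{t} k(\sigma)\d\sigma$ recasts $\int_{0}^{t} k(t-s)\Delta\u(s)\d s$ as $K(t)\Delta\u_{0} + (K\ast\Delta\v)(t)$, whose $t$-derivative is $k(t)\Delta\u_{0} + (k\ast\Delta\v)(t)$, matching \eqref{3.3}. The incompressibility \eqref{3.4} and boundary condition \eqref{3.5} pass through trivially. For \eqref{3.6}, evaluating \eqref{2.2} at $t=0$ makes the convolution vanish and leaves $(I-\mu_{1}\Delta)\v(0) = \mu_{0}\Delta\u_{0} - (\u_{0}\cdot\nabla)\u_{0} - \nabla p_{0}$, which coincides with the definition of $\v_{0}$ in (A4). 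To extract \eqref{3.7}, test \eqref{3.3} with $\varphi$ and integrate over $\mathcal{O}$: since $\nabla \cdot \varphi = 0$ and $\varphi|_{\partial\mathcal{O}} = 0$ the pressure term vanishes; integration by parts shifts $\Delta$ from $\v$ onto $\varphi$ in the viscous and memory integrands; and, because $\v$ and $\u_{0} + \int_{0}^{t}\v\d s$ are divergence-free, the two convective integrals become $-\int_{\mathcal{O}}((\v\cdot\nabla)\varphi)\cdot(\u_{0} + \int_{0}^{t}\v\d s)\d x$ and $-\int_{\mathcal{O}}(((\u_{0} + \int_{0}^{t}\v\d s)\cdot\nabla)\varphi)\cdot \v \d x$. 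The time-derivative on the left becomes $\partial_{t}\int_{\mathcal{O}}(I-\mu_{1}\Delta)\varphi\cdot\v\d x = r''(t)$, obtained by differentiating \eqref{2.6} twice and consistent with (A5). Solving for $k(t)$ using the non-degeneracy $\alpha^{-1} = \int_{\mathcal{O}}\varphi\cdot\Delta\u_{0}\d x \neq 0$ from (A3) produces \eqref{3.7}.

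For the backward direction, set $\u(t) := \u_{0} + \int_{0}^{t}\v(s)\d s$. Then $\partial_{t}\u = \v$, $\u(0) = \u_{0}$, $\nabla\cdot\u = 0$, and $\u|_{\partial\mathcal{O}} = \mathbf{0}$ are immediate, and the regularity \eqref{3.1} follows from \eqref{3.2}. To recover \eqref{2.2}, integrate \eqref{3.3} over $(0,t)$: a Fubini reassembly converts $\int_{0}^{t}[k(\sigma)\Delta\u_{0} + (k\ast\Delta\v)(\sigma)]\d\sigma$ back into $\int_{0}^{t} k(t-s)\Delta\u(s)\d s$, while the convective terms reassemble as $(\u\cdot\nabla)\u - (\u_{0}\cdot\nabla)\u_{0}$ by the fundamental theorem of calculus. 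Defining $p(t) := p_{0} + \int_{0}^{t}\partial_{t}p(s)\d s$, the integrated identity is exactly \eqref{2.2} at time $t$ minus \eqref{2.2} at time $0$; the latter reads $(I-\mu_{1}\Delta)\v_{0} = \mu_{0}\Delta\u_{0} - (\u_{0}\cdot\nabla)\u_{0} - \nabla p_{0}$, which holds by (A4). Hence \eqref{2.2} is satisfied on $(0,T)$. Finally, undoing the integrations by parts in the derivation of \eqref{3.7} rewrites it as $\int_{\mathcal{O}}(I-\mu_{1}\Delta)\varphi\cdot\partial_{t}\v\d x = r''(t)$; two successive time-integrations, with the two integration constants fixed by the compatibility equalities in (A5), yield \eqref{2.6}.

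The main obstacle is bookkeeping rather than any deep analysis: one must carefully track the Fubini interchange that dismantles and later reassembles the convolution as $K(t)\Delta\u_{0} + (K\ast\Delta\v)(t)$, the spatial integrations by parts that realign the convective terms in \eqref{3.7}, and the pressure reconstruction. Equation \eqref{3.3} determines only $\partial_{t}p$, and indeed only as a gradient field; one therefore interprets $\partial_{t}p$ as this gradient remainder and defines $p$ by time-integration from $p_{0}$. The compatibility conditions (A4)--(A5), together with the non-degeneracy (A3), are precisely what guarantee that no stray integration constants appear and that the two formulations correspond exactly.
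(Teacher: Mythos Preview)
Your proposal is correct and follows essentially the same approach as the paper: differentiate \eqref{2.2} in time and test against $\varphi$ for the forward direction, then integrate \eqref{3.3} in time and fix the integration constants via (A4)--(A5) for the backward direction. The only cosmetic differences are that you make the convolution bookkeeping more explicit via the primitive $K(t)=\int_{0}^{t}k(\sigma)\d\sigma$, and in recovering \eqref{2.6} you go directly from \eqref{3.3}+\eqref{3.7} to $r''(t)=\int_{\mathcal{O}}(I-\mu_{1}\Delta)\varphi\cdot\partial_{t}\v\,\d x$ and integrate twice, whereas the paper first establishes \eqref{2.2} and then uses it to simplify the intermediate expression for $r'(t)$; both routes are equivalent.
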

	
	\begin{proof}
		The proof constitutes of mainly two steps.
		
		\vskip 0.2 cm
		\noindent\textbf{Step 1}. Suppose that the system \eqref{2.2}-\eqref{2.6} has a solution $(\u,k)$ satisfying
		$$\u \in \mathrm{H}^{2}(0,T;\mathbb{H}_{0}^{1}(\mathcal{O}) \cap \mathbb{H}^{2}(\mathcal{O}))\ \mbox{ and } \ k \in \mathrm{L}^{2}(0,T).$$
		Then, it is easy to see that $\v:=\partial_{t}{\u}$ and $k$ satisfy the equations \eqref{3.2}, \eqref{3.4} and \eqref{3.5}.
		From the equation \eqref{2.2}, we have
		\begin{align*}
		\partial_{t}{\u}&=(I-\mu_{1} \Delta)^{-1} \left(\mu_{0} \Delta \u -(\u\cdot \nabla)\u +\int_{0}^{t}k(t-s)\Delta \u(s)\d s-\nabla p\right),\\
		\v(x,0)&=(I-\mu_{1} \Delta)^{-1} \big(\mu_{0} \Delta \u_{0} -(\u_{0}\cdot \nabla)\u_{0}-\nabla p_{0}\big)=\v_{0}.
		\end{align*}
		Therefore, we obtain \eqref{3.6} using assumption (A4). Taking $\partial_{t}$ to equation \eqref{2.2}, we get
		\begin{align*}
		&\partial_{t}{\v}-\mu_{1} \partial_{t} \Delta \v-\mu_{0} \Delta \v +(\v\cdot \nabla)\u+(\u\cdot \nabla)\v -\partial_{t}\left(\int_{0}^{t}k(t-s)\Delta \u(s)\d s\right)+\nabla \partial_{t}p=\mathbf{0}.
		\end{align*}
		Using $\u(t)=\u_{0}+\int_{0}^{t}\v(s) \d s$ in the above equation, we find
		\begin{align*}
		&\partial_{t}{\v}-\mu_{1} \partial_{t} \Delta \v-\mu_{0} \Delta \v -k\Delta \u_{0} -\int_{0}^{t}k(t-s)\Delta \v(s)\d s\\&\quad+\big(\v\cdot \nabla\big)\left(\u_{0}+\int_{0}^{t}\v(s) \d s\right)+\left(\left(\u_{0}+\int_{0}^{t}\v(s) \d s\right)\cdot \nabla\right)\v+\nabla \partial_{t}p=\mathbf{0},
		\end{align*}
		and equation \eqref{3.3} follows. Taking the inner product in \eqref{3.3} with $\varphi$ and using assumptions (A2) and (A3), we obtain
		\begin{align*}
		k(t)&=\alpha \bigg\{ r''(t)-\mu_{0} \int_{\mathcal{O}} \v \cdot \Delta \varphi \d x -\int_{0}^{t}\int_{\mathcal{O}}k(t-s) \v \cdot \Delta \varphi \d x \d s\\&\quad-\int_{\mathcal{O}}\big((\v\cdot \nabla)\varphi\big)\cdot\left(\u_{0}+\int_{0}^{t}\v(s) \d s\right)\d x-\int_{\mathcal{O}}\left[\left(\left(\u_{0}+\int_{0}^{t}\v(s) \d s\right)\cdot \nabla\right)\varphi \right]\cdot\v \d x\bigg\},
		\end{align*}
		where $ r''(t)=\int_{\mathcal{O}}(I-\mu_{1} \Delta)\varphi \cdot \partial_{t} {\v} \d x$, and hence the equation \eqref{3.7} follows.
		
		\vskip 0.2 cm
		\noindent	\textbf{Step 2}. Suppose that the system \eqref{3.3}-\eqref{3.7} has a solution $(\v,k)$ satisfying
		$$\v \in \mathrm{H}^{1}(0,T;\mathbb{H}_{0}^{1}(\mathcal{O}) \cap \mathbb{H}^{2}(\mathcal{O})) \ \mbox{ and } \  k \in \mathrm{L}^{2}(0,T).$$	It can be easily shown that \eqref{3.1} and\eqref{2.3}-\eqref{2.5} hold.	Since $\v=\partial_{t} \u$, the equation \eqref{3.3} can be rewritten as
		\begin{align*}
		\partial_{t}\left(\partial_{t}{\u}-\mu_{1} \partial_{t} \Delta \u-\mu_{0} \Delta \u +(\u\cdot \nabla)\u -\int_{0}^{t}k(t-s)\Delta \u(s)
		\d s+\nabla p \right)=\mathbf{0}.
		\end{align*}
		Integrating the above equation, we obtain
		\begin{align*}
		\partial_{t}{\u}-\mu_{1} \partial_{t} \Delta \u-\mu_{0} \Delta \u +(\u\cdot \nabla)\u -\int_{0}^{t}k(t-s)\Delta \u(s)\d s+\nabla p=c_1,
		\end{align*}
		where $c_1$ is the constant of integration. Taking $t=0$, we find
		\begin{align*}
		\left(I-\mu_{1} \Delta\right) \v_{0}-\mu_{0} \Delta \u_{0} +(\u_{0}\cdot \nabla)\u_{0}+\nabla p_{0}=c_1.
		\end{align*}
		Making use of assumption (A4), we get $c_1=0$ and we obtain \eqref{2.2}. The equation \eqref{3.7} for $k$ can be re-written as
		\begin{align*}
		r''(t)&=\partial_{t}\left(\mu_{0} \int_{\mathcal{O}} \varphi \cdot \Delta \u \d x -\int_{\mathcal{O}}\big((\u\cdot \nabla)\u\big) \cdot \varphi \d x+\int_{0}^{t}\int_{\mathcal{O}}k(t-s) \varphi \cdot \Delta \u \d x \d s \right).
		\end{align*}
		Integrating this equation with respect to $t$, we obtain
		\begin{align*}
		r'(t)=\mu_{0} \int_{\mathcal{O}} \varphi \cdot \Delta \u \d x -\int_{\mathcal{O}}\big((\u\cdot \nabla)\u\big) \cdot \varphi \d x+\int_{0}^{t}\int_{\mathcal{O}}k(t-s) \varphi \cdot \Delta \u \d x \d s+c_2.
		\end{align*}
		Taking $t=0$ and using assumption (A5), we get $c_2=0,$ so that 
		\begin{align*}
		r'(t)=\mu_{0} \int_{\mathcal{O}} \varphi \cdot \Delta \u \d x -\int_{\mathcal{O}}\big((\u\cdot \nabla)\u\big) \cdot \varphi \d x+\int_{0}^{t}\int_{\mathcal{O}}k(t-s) \varphi \cdot \Delta \u \d x \d s.
		\end{align*}
		Using \eqref{2.2} in the above equation along with assumption (A2), we deduce that 
		\begin{align*}
		r'(t)&= \int_{\mathcal{O}} (I-\mu_{1}\Delta)\partial_{t} \u \cdot \varphi \d x+ \int_{\mathcal{O}} \nabla p \cdot \varphi \d x\\
		&= \int_{\mathcal{O}} (I-\mu_{1}\Delta)\varphi \cdot \partial_{t}\u \d x
		= \partial_{t}\left[\int_{\mathcal{O}} (I-\mu_{1}\Delta)\varphi \cdot \u \d x\right].
		\end{align*}
		Integrating the above equation, taking $t=0$ and using assumption (A5), we arrive at
		\begin{align*}
		r(t)= \int_{\mathcal{O}}  (I-\mu_{1}\Delta)\varphi \cdot \u \d x,
		\end{align*}
		which is \eqref{2.6} and it completes the proof.
	\end{proof}	

	\section{Local in Time Existence}\label{sec4}\setcounter{equation}{0}
	From Theorem \ref{Equi-form}, we know that  solving the inverse problem \eqref{2.2}-\eqref{2.6} with solution  $(\u,k)$ is equivalent to solving the system \eqref{3.3}-\eqref{3.7} with solution $(\v,k)$. Therefore, instead of proving results of the existence and uniqueness of solutions to the system \eqref{2.2}-\eqref{2.6}, we prove similar results for the system  \eqref{3.3}-\eqref{3.7}. The proof is based on \emph{contraction mapping principle}. It is important to note that having existence of unique $(\v,k)$ to the system \eqref{3.3}-\eqref{3.7} is equivalent to having existence of unique $(\u,k)$ to the inverse problem \eqref{2.2}-\eqref{2.6}.\\
	
	The following theorem is the main result of this section: 
	\begin{theorem}[Local in time existence]\label{thm1}
		Let the assumptions $(A1)$-$(A5)$ hold. Then there exists $\tau \in (0,T]$, such that the inverse problem \ref{IP1} has a unique solution $$(\u,k) \in \mathrm{H}^{2}(0,\tau;\mathbb{H}_{0}^{1}(\mathcal{O}) \cap \mathbb{H}^{2}(\mathcal{O})) \times \mathrm{L}^{2}(0,\tau).$$
	\end{theorem}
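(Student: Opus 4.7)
By the equivalence established in Theorem \ref{Equi-form}, it suffices to prove existence and uniqueness of a solution $(\v,k)$ of the reduced system \eqref{3.3}-\eqref{3.7} in the space
\[
\mathcal{X}_\tau := \mathrm{H}^{1}(0,\tau;\mathbb{H}_{0}^{1}(\mathcal{O})\cap\mathbb{H}^{2}(\mathcal{O})) \times \mathrm{L}^{2}(0,\tau).
\]
I plan to apply the Banach contraction principle to a map $\Psi$ on a closed ball $\mathcal{B}_R\subset\mathcal{X}_\tau$, defined as follows. Given $(\wi{\v},\wi{k})\in\mathcal{B}_R$, set $\wi{\u}(t):=\u_{0}+\int_{0}^{t}\wi{\v}(s)\,\d s$, and define $\Psi(\wi{\v},\wi{k}):=(\v,k)$, where $k$ is given explicitly by the right-hand side of \eqref{3.7} evaluated at $(\wi{\v},\wi{k})$, and $\v$ is the unique solution of the \emph{linear} Stokes-Voigt equation obtained from \eqref{3.3} by freezing every quadratic and convolution term at $(\wi{\v},\wi{k},\wi{\u})$, with initial datum $\v_{0}$ from (A4) and homogeneous Dirichlet boundary values.

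For this auxiliary linear map to be well-defined in $\mathcal{X}_\tau$, I would apply the Helmholtz (Leray) projector to eliminate $\nabla\partial_{t}p$ and exploit the fact that $(I-\mu_{1}\Delta)$ is a topological isomorphism from $\mathbb{H}_{0}^{1}\cap\mathbb{H}^{2}$ onto $\mathbb{L}^{2}$ (on divergence-free fields). The equation for $\v$ then reduces to an abstract first-order ODE in $\mathbb{H}_{0}^{1}\cap\mathbb{H}^{2}$ driven by an $\mathrm{L}^{2}(0,\tau;\mathbb{L}^{2})$ source and is solved by a standard energy estimate
\[
\|\v\|_{\mathrm{H}^{1}(0,\tau;\mathbb{H}_{0}^{1}\cap\mathbb{H}^{2})} \le C\left(\|\v_{0}\|_{\mathbb{H}_{0}^{1}\cap\mathbb{H}^{2}} + \|F(\wi{\v},\wi{k})\|_{\mathrm{L}^{2}(0,\tau;\mathbb{L}^{2})}\right),
\]
where $F$ collects every source term on the right-hand side of \eqref{3.3}. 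For $k$, assumption (A5) gives $r''\in\mathrm{L}^{2}$, while (A2), (A3) and the regularity of $(\wi{\v},\wi{k},\wi{\u})$ control the remaining terms in \eqref{3.7} in $\mathrm{L}^{2}(0,\tau)$.

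To obtain self-mapping and contraction on $\mathcal{B}_R$, I would split $F$ into three pieces: the pointwise-in-time term $\wi{k}(t)\,\Delta\u_{0}$, of $\mathrm{L}^{2}$-in-time norm bounded by $\|\wi{k}\|_{\mathrm{L}^{2}(0,\tau)}\|\Delta\u_{0}\|_{\mathbb{L}^{2}}$; the convolution $\wi{k}\ast\Delta\wi{\v}$, for which Lemma \ref{Lemma1} produces a factor $\tau^{1/2}$; and the trilinear pieces $(\wi{\v}\cdot\nabla)\wi{\u}$ and $(\wi{\u}\cdot\nabla)\wi{\v}$, bounded via $\mathbb{H}^{2}\hookrightarrow\mathbb{L}^{\infty}$ in 3D together with the Gagliardo-Nirenberg estimates of Lemma \ref{G-N2}. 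Writing $\wi{\u}-\u_{0}=\int_{0}^{t}\wi{\v}\,\d s$ and invoking Lemma \ref{Lemma2}, every nonlinear contribution except the one coupling $\wi{\v}$ to $\u_{0}$ automatically carries a positive power of $\tau$; the remaining $\u_{0}$-coupling is linear in $\wi{\v}$ and can be absorbed using $\|\wi{\v}\|_{\mathrm{L}^{2}(0,\tau;\mathbb{H}^{2})}\le \tau^{1/2}\|\wi{\v}\|_{\mathrm{L}^{\infty}(0,\tau;\mathbb{H}^{2})}$ combined with the continuous embedding $\mathrm{H}^{1}(0,\tau;\mathbb{H}^{2})\hookrightarrow \mathrm{L}^{\infty}(0,\tau;\mathbb{H}^{2})$ and the finite radius $R$. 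The same estimates applied to differences $\Psi(\wi{\v}_{1},\wi{k}_{1})-\Psi(\wi{\v}_{2},\wi{k}_{2})$ produce a Lipschitz constant $\eta(\tau,R)$ with $\eta(\tau,R)\to 0$ as $\tau\to 0^{+}$. First choosing $R$ larger than the baseline norm $C\|\v_{0}\|_{\mathbb{H}_{0}^{1}\cap\mathbb{H}^{2}}$ and then $\tau$ small enough that $\eta(\tau,R)<1/2$ delivers both stability of $\mathcal{B}_R$ and the contraction property.

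The principal obstacle is handling the convective nonlinearity $(\wi{\v}\cdot\nabla)\wi{\u}+(\wi{\u}\cdot\nabla)\wi{\v}$ at the $\mathbb{H}^{2}$-level required for the fixed-point space while retaining the $\tau$-smallness that drives the contraction; a naive Sobolev estimate on the piece coupled to the time-independent $\u_{0}$ yields a constant independent of $\tau$ that would spoil the argument, so one must combine the Kelvin-Voigt regularization $\mu_{1}\partial_{t}\Delta\v$ (which allows the equation to be treated as a first-order ODE in $\mathbb{H}_{0}^{1}\cap\mathbb{H}^{2}$ and therefore avoids any loss of derivatives) with the identity $\wi{\u}=\u_{0}+1\ast\wi{\v}$ to extract the missing $\tau^{1/2}$ factor from the time direction. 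Once the fixed point $(\v,k)\in\mathcal{X}_\tau$ is produced, Theorem \ref{Equi-form} converts it back to a solution $(\u,k)\in\mathrm{H}^{2}(0,\tau;\mathbb{H}_{0}^{1}(\mathcal{O})\cap\mathbb{H}^{2}(\mathcal{O}))\times\mathrm{L}^{2}(0,\tau)$ of the original inverse problem \ref{IP1}, and uniqueness of $(\u,k)$ follows from uniqueness of the fixed point via the same equivalence.
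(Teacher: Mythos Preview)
Your overall plan---reduce to the equivalent system \eqref{3.3}--\eqref{3.7} and run a Banach fixed-point argument on a ball in $\mathrm{H}^{1}(0,\tau;\mathbb{H}_{0}^{1}\cap\mathbb{H}^{2})\times\mathrm{L}^{2}(0,\tau)$---is exactly the paper's, and most of your term-by-term bookkeeping (Lemmas \ref{Lemma1}, \ref{Lemma2}, Gagliardo--Nirenberg, the splitting $\wi{\u}=\u_{0}+1\ast\wi{\v}$) matches. However, there is a genuine gap in the way you define $\Psi$.

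You freeze the term $k\,\Delta\u_{0}$ at $\wi{k}$ and list ``the pointwise-in-time term $\wi{k}(t)\,\Delta\u_{0}$'' among the sources for $\v$. This term contributes $\|\wi{k}\|_{\mathrm{L}^{2}(0,\tau)}\|\Delta\u_{0}\|_{\mathbb{L}^{2}}$ to $\|F\|_{\mathrm{L}^{2}(0,\tau;\mathbb{L}^{2})}$ with \emph{no} power of $\tau$ in front. Consequently your self-map estimate reads, schematically,
\[
\|\v\|_{\mathrm{H}^{1}}+\|k\|_{\mathrm{L}^{2}} \le C\|\v_{0}\|+C\|\Delta\u_{0}\|\,R+\alpha\|r''\|+o_{\tau}(1),
\]
which cannot be closed by choosing $R$ large unless $C\|\Delta\u_{0}\|<1$, a smallness hypothesis you do not have. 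The same defect hits the contraction step: the difference $(\wi{k}_{1}-\wi{k}_{2})\Delta\u_{0}$ forces a piece $C\|\Delta\u_{0}\|\,\|\wi{k}_{1}-\wi{k}_{2}\|_{\mathrm{L}^{2}}$ into the Lipschitz constant, so your claim that $\eta(\tau,R)\to 0$ as $\tau\to 0^{+}$ is false for this map. The paper avoids this by a two-stage definition of $\Psi$: \emph{first} compute the new $k$ from \eqref{4.1} (so that, via \eqref{4.5}--\eqref{4.6}, $\|k\|_{\mathrm{L}^{2}(0,\tau)}\le \alpha\|r''\|+O(\tau^{1/2})$), and \emph{then} insert this freshly computed $k$---not $\wi{k}$---into the source of the linear problem \eqref{4.2} for $\v$. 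With that ordering, both $\|k\,\Delta\u_{0}\|$ and $\|(k_{1}-k_{2})\Delta\u_{0}\|$ inherit the $\tau^{1/2}$ factor coming from \eqref{4.6} and \eqref{4.12}, and the self-map/contraction close for $\tau$ small with no restriction on $\u_{0}$.

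A smaller imprecision: the embedding $\mathrm{H}^{1}(0,\tau)\hookrightarrow\mathrm{L}^{\infty}(0,\tau)$ does \emph{not} have a constant uniform in $\tau$ (it blows up like $\tau^{-1/2}$), so your route to extracting $\tau^{1/2}$ from the $\u_{0}$-coupled convective piece is incomplete as stated. The paper instead uses the fixed initial condition $\wi{\v}(0)=\v_{0}$ and Lemma \ref{Lemma2} applied to $\wi{\v}-\v_{0}$ to obtain $\|\wi{\v}\|_{\mathrm{L}^{2}(0,\tau;\mathbb{L}^{2})}\le \tau\|\partial_{t}\wi{\v}\|_{\mathrm{L}^{2}}+\tau^{1/2}\|\v_{0}\|$ (equation \eqref{4.5}), which is the correct $\tau$-uniform replacement.
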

	\begin{proof}
		
		Let us define the space
		\begin{align*}
		\mathcal{V}(\tau,L):=&\bigg\{(\tilde{\v},\tilde{k}) \in  \mathrm{H}^{1}(0,\tau;\mathbb{H}_{0}^{1}(\mathcal{O}) \cap \mathbb{H}^{2}(\mathcal{O})) \times \mathrm{L}^{2}(0,\tau):
		\nabla \cdot \tilde{\v}=0, \ \mbox{ in } \ \mathcal{O} \times (0,\tau),\\
		&\quad\tilde{\v}=\mathbf{0}, \  \mbox{ on }\ \partial \mathcal{O} \times [0,\tau),\ \ 
		\tilde{\v}=\v_{0}, \    \mbox{ in } \ \mathcal{O} \times \{0\}  \ \mbox{ and } \ \nonumber\\&\quad \|\tilde{\v}\|_{\mathrm{H}^{1}\left(0,\tau;\mathbb{H}_{0}^{1}(\mathcal{O}) \cap \mathbb{H}^{2}(\mathcal{O})\right)} +\|\tilde{k}\|_{\mathrm{L}^{2}(0,\tau)} \leq L  \bigg\},
		\end{align*}
		where $L$ is a positive constant, which will be determined later. We also define the mapping $ \Psi \colon \mathcal{V}(\tau,L) \to \mathcal{V}(\tau,L)$ such that
		$(\tilde{\v},\tilde{k}) \mapsto (\v,k)$ 
		through \\
		\begin{align}\label{4.1}
		&k(t)\nonumber:=\alpha \bigg\{r''-\mu_{0} \int_{\mathcal{O}}\tilde{\v} \cdot \Delta \varphi \d x -\int_{0}^{t}\int_{\mathcal{O}}\tilde{k}(t-s) \tilde{\v} \cdot\Delta \varphi \d x \d s\\ &\quad-\int_{\mathcal{O}}\big((\tilde{\v}\cdot \nabla)\varphi\big) \cdot \bigg(\u_{0}
		+\int_{0}^{t}\tilde{\v}(s)\d s\bigg)\d x-\int_{\mathcal{O}}\left[\left(\left(\u_{0}+\int_{0}^{t}\tilde{\v}(s)\d s\right) \cdot \nabla\right) \varphi\right] \cdot \tilde{\v}\d x\bigg\},
		\end{align}
		and the initial boundary value problem
		\begin{equation}\label{4.2}
		\left\{
		\begin{aligned}
		\partial_{t}{\v}-\mu_{1} \partial_{t} \Delta \v-\mu_{0} \Delta \v- k\Delta \u_{0}-\int_{0}^{t}k(t-s)\Delta \tilde{\v}(s)\d s&+(\tilde{\v}\cdot \nabla)\bigg(\u_{0}+\int_{0}^{t}\tilde{\v}(s)\d s\bigg)\\+\left(\left(\u_{0}+\int_{0}^{t}\tilde{\v}(s)\d s\right) \cdot \nabla\right)\tilde{\v}+\nabla \partial_{t}p&=\mathbf{0}, \ \  \mbox{ in }\ \mathcal{O} \times (0,\tau), \\
		\nabla \cdot \v&=0, \ \   \mbox{ in }\   \mathcal{O} \times (0,\tau), \\
		\v&=\mathbf{0}, \ \  \mbox{ on }\ \partial \mathcal{O} \times [0,\tau), \\
		\v&=\v_{0}, \   \mbox{ in }\   \mathcal{O} \times \{0\}.
		\end{aligned}
		\right.
		\end{equation}

		In order to complete the proof of Theorem \ref{thm1}, we need to show that the mapping  $ \Psi \colon \mathcal{V}(\tau,L) \to \mathcal{V}(\tau,L)$ is a contraction map. For simplicity, hereafter $C$ represents a generic constant depending only on $\mathcal{O}$.
		\vskip 3mm
		\noindent 
		\textbf{Step 1}. Firstly, we show that the map $\Psi$ is well defined for an appropriate choice of $L$ and $\tau$. From \eqref{4.1}, we have 
		\begin{align}\label{4.1N}
		\left\|k\right\|_{\mathrm{L}^{2}(0,\tau)} &\leq \alpha \|r''\|_{\mathrm{L}^{2}(0,\tau)} +\alpha \mu_{0}\|\tilde{\v}\|_{\mathrm{L}^{2}(0,\tau;\mathbb{L}^{2}(\mathcal{O}))} \|\Delta \varphi\|_{\mathbb{L}^{2}(\mathcal{O})} \nonumber \\&\quad+\alpha\left\|\int_{0}^{t}\tilde{k}(t-s) \left(\int_{\mathcal{O}}\tilde{\v} \cdot\Delta \varphi \d x\right)\d s\right\|_{\mathrm{L}^{2}(0,\tau)}  \nonumber \\&\quad+2\alpha\left(\int_{0}^{\tau}\left\|\u_{0}+\int_{0}^{t}\tilde{\v}(s)\d s\right\|^2_{\mathbb{L}^{\infty}(\mathcal{O})} \left\|\tilde{\v}(t)\right\|^2_{\mathbb{L}^{2}(\mathcal{O})} \left\|\nabla \varphi\right\|^2_{\mathbb{L}^{2}(\mathcal{O})}\d t\right)^{1/2}.
		\end{align}
		Applying Lemma \ref{Lemma1}, we find
		\begin{align*}
		\left\|\int_{0}^{t}\tilde{k}(t-s) \left(\int_{\mathcal{O}}\tilde{\v} \cdot\Delta \varphi \d x\right)\d s\right\|_{\mathrm{L}^{2}(0,\tau)} 
		& \leq
		\left\|\tilde{k}\right\|_{\mathrm{L}^{1}(0,\tau)}\left\|\int_{\mathcal{O}}\tilde{\v} \cdot \Delta \varphi \d x\right\|_{\mathrm{L}^{2}(0,\tau)} \\
		& \leq \tau^{1/2}\|\tilde{k}\|_{\mathrm{L}^{2}(0,\tau)} \left( \int_{0}^{\tau}\|\tilde{\v}(t)\|^2_{\mathbb{L}^{2}(\mathcal{O})} \|\Delta \varphi\|^2_{\mathbb{L}^{2}(\mathcal{O})}\d t\right)^{1/2}\\
		&= \tau^{1/2}\|\tilde{k}\|_{\mathrm{L}^{2}(0,\tau)}\|\Delta \varphi\|_{\mathbb{L}^{2}(\mathcal{O})}\left\|\tilde{\v}\right\|_{\mathrm{L}^{2}\left(0,\tau;\mathbb{L}^{2}(\mathcal{O})\right)}.
		\end{align*}
		Substituting the above estimate in \eqref{4.1N}, we obtain
		\begin{align}\label{4.3}
		\nonumber \|k\|_{\mathrm{L}^{2}(0,\tau)} 
		& \leq \alpha \|r''\|_{\mathrm{L}^{2}(0,\tau)}+\alpha \bigg\{\mu_{0}\|\Delta \varphi\|_{\mathbb{L}^{2}(\mathcal{O})}+\tau^{1/2}\|\tilde{k}\|_{\mathrm{L}^{2}(0,\tau)}\left\|\Delta \varphi\right\|_{\mathbb{L}^{2}(\mathcal{O})}\\&\quad+2\left(\|\u_{0}\|_{\mathbb{L}^{\infty}(\mathcal{O})}+\int_{0}^{\tau}\|\tilde{\v}(s)\|_{\mathbb{L}^{\infty}(\mathcal{O})}\d s \right) \|\nabla \varphi\|_{\mathbb{L}^{2}(\mathcal{O})}\bigg\} \|\tilde{\v}\|_{\mathrm{L}^{2}\left(0,\tau;\mathbb{L}^{2}(\mathcal{O})\right)}.
		\end{align}
		Using the Sobolev embedding theorem, we deduce that 
		\begin{align}\label{4.4}
		\|\u_{0}\|_{\mathbb{L}^{\infty}(\mathcal{O})}+\int_{0}^{\tau}\|\tilde{\v}(s)\|_{\mathbb{L}^{\infty}(\mathcal{O})}\d s &\leq C \left(\|\u_{0}\|_{\mathbb{H}^{2}(\mathcal{O})}+\int_{0}^{\tau}\|\tilde{\v}(s)\|_{\mathbb{H}^{2}(\mathcal{O})}\d s\right) \nonumber\\
		&\leq  C \left(\|\u_{0}\|_{\mathbb{H}^{2}(\mathcal{O})}+\tau^{1/2}\|\tilde{\v}\|_{\mathrm{L}^{2}\left(0,\tau;\mathbb{H}^{2}(\mathcal{O})\right)}\right).
		\end{align}
		Applying Lemma \ref{Lemma2}, we get
		\begin{align}\label{4.5}
		\|\tilde{\v}\|_{\mathrm{L}^{2}(0,\tau;\mathbb{L}^{2}(\mathcal{O}))} &\leq \|\tilde{\v}-\v_{0}\|_{\mathrm{L}^{2}(0,\tau;\mathbb{L}^{2}(\mathcal{O}))}+\|\v_{0}\|_{\mathrm{L}^{2}(0,\tau;\mathbb{L}^{2}(\mathcal{O}))}\nonumber\\
		&\leq
		\tau\|\partial_{t}(\tilde{\v}-\v_{0})\|_{\mathrm{L}^{2}(0,\tau;\mathbb{L}^{2}(\mathcal{O}))}+\tau^{1/2}\|\v_{0}\|_{\mathbb{L}^{2}(\mathcal{O})}\nonumber\\
		&\leq
		\tau\|\partial_{t}\tilde{\v}\|_{\mathrm{L}^{2}(0,\tau;\mathbb{L}^{2}(\mathcal{O}))}+\tau^{1/2}\|\v_{0}\|_{\mathbb{L}^{2}(\mathcal{O})}.
		\end{align}
		Using \eqref{4.4} and \eqref{4.5} in \eqref{4.3} and then by the definition of the space $\mathcal{V}(\tau,L)$, we arrive at
		\begin{align}\label{4.6}
		\|k\|_{\mathrm{L}^{2}(0,\tau)}&\leq \alpha \|r''\|_{\mathrm{L}^{2}(0,\tau)}+\alpha\bigg\{ \mu_{0}\|\Delta \varphi\|_{\mathbb{L}^{2}(\mathcal{O})}+ \tau^{1/2} \|\tilde{k}\|_{\mathrm{L}^{2}(0,\tau)}\left\|\Delta \varphi\right\|_{\mathbb{L}^{2}(\mathcal{O})}\nonumber\\&\quad+C\bigg(\|\u_{0}\|_{\mathbb{H}^{2}(\mathcal{O})}+\tau^{1/2}\|\tilde{\v}\|_{\mathrm{L}^{2}(0,\tau;\mathbb{H}^{2}(\mathcal{O}))}\bigg) \|\nabla \varphi\|_{\mathbb{L}^{2}(\mathcal{O})}\bigg\} \nonumber\\&\qquad\times\left(\tau\|\partial_{t}\tilde{\v}\|_{\mathrm{L}^{2}(0,\tau;\mathbb{L}^{2}(\mathcal{O}))}+\tau^{1/2}\| \v_{0}\|_{\mathbb{L}^{2}(\mathcal{O})}\right)\nonumber\\
		&\leq \alpha \|r''\|_{\mathrm{L}^{2}(0,\tau)}+\alpha\bigg\{ \mu_{0}\|\Delta \varphi\|_{\mathbb{L}^{2}(\mathcal{O})}+ \tau^{1/2}L\|\Delta \varphi\|_{\mathbb{L}^{2}(\mathcal{O})}\nonumber \\&\quad+C\left(\|\u_{0}\|_{\mathbb{H}^{2}(\mathcal{O})}+\tau^{1/2}L\right)\|\nabla \varphi\|_{\mathbb{L}^{2}(\mathcal{O})}\bigg\}\tau^{1/2} \left(\tau^{1/2}L+\| \v_{0}\|_{\mathbb{L}^{2}(\mathcal{O})}\right).
		\end{align}
		Using the energy estimates for the linear problem \eqref{4.2}, we estimate
		\begin{align}\label{4.7}
		\|\v\|_{\mathrm{H}^{1}(0,\tau;\mathbb{H}_{0}^{1}(\mathcal{O}) \cap \mathbb{H}^{2}(\mathcal{O}))} \leq C\big(\| \v_{0}\|_{\mathbb{H}_{0}^{1}(\mathcal{O}) \cap \mathbb{H}^{2}(\mathcal{O})}+ \|G\|_{\mathrm{L}^{2}\left(0,\tau;\mathbb{L}^{2}(\mathcal{O})\right)}\big),
		\end{align}
		where
		\begin{align}\label{G}
		G&:= \underbrace{k\Delta \u_{0}}_{:=G_1}\underbrace{+\int_{0}^{t}k(t-s)\Delta \tilde{\v}(s)\d s}_{:=G_2}\underbrace{-(\tilde{\v}\cdot \nabla)\left(\u_{ 0}+\int_{0}^{t}\tilde{\v}(s)\d s\right)}_{:=G_3}\nonumber\\&\quad\underbrace{-\left(\left(\u_{0}+\int_{0}^{t}\tilde{\v}(s)\d s\right) \cdot \nabla\right)\tilde{\v}}_{:=G_4}.
		\end{align} 
		Next, we estimate each $G_{i}$'s $(i=1,2,3,4)$ separately as follows:
		\begin{align}\label{G1}
		\|G_{1}\|_{\mathrm{L}^{2}(0,\tau;\mathbb{L}^{2}(\mathcal{O}))}
		= \| k\|_{\mathrm{L}^{2}(0,\tau)}\|\Delta \u_{0}\|_{\mathbb{L}^{2}(\mathcal{O})}\leq C\| k\|_{\mathrm{L}^{2}(0,\tau)}\| \u_{0}\|_{\mathbb{H}^{2}(\mathcal{O})}.
		\end{align}
		Applying Lemma \ref{Lemma1}, we obtain
		\begin{align}\label{G2}
		\|G_{2}\|_{\mathrm{L}^{2}(0,\tau;\mathbb{L}^{2}(\mathcal{O}))}
		\leq \| k\|_{\mathrm{L}^{1}(0,\tau)}
		\|\Delta \tilde{\v}\|_{\mathrm{L}^{2}(0,\tau;\mathbb{L}^{2}(\mathcal{O}))}\leq C\tau^{1/2}\| k\|_{\mathrm{L}^{2}(0,\tau)}\|\tilde{\v}\|_{\mathrm{L}^{2}(0,\tau;\mathbb{H}^{2}(\mathcal{O}))}.
		\end{align}
		In order to estimate $G_{3}$, we split it into two parts
		\begin{align}\label{G3}
		G_{3}=\left(\tilde{\v}\cdot \nabla\right)\u_{0}+\left(\tilde{\v}\cdot \nabla\right)\left(\int_{0}^{t}\tilde{\v}(s)\d s\right)
		=:G_{31}+G_{32}.
		\end{align}
		Applying  H\"older's inequality, Sobolev embedding theorem and Gagliardo-Nirenberg's inequality, we have
		\begin{align}\label{G31}
		\|G_{31}\|_{\mathrm{L}^{2}\left(0,\tau;\mathbb{L}^{2}(\mathcal{O})\right)}&=\left(\int_{0}^{\tau}\|(\tilde{\v}\cdot \nabla)\u_{0}\|_{\mathbb{L}^{2}(\mathcal{O})}^{2} \d s\right)^{1/2} \nonumber\\& \leq  \|\nabla \u_{0}\|_{\mathbb{L}^{6}(\mathcal{O})}\bigg(\int_{0}^{\tau}\|\tilde{\v}(s)\|_{\mathbb{L}^{3}(\mathcal{O})}^{2}\d s\bigg)^{1/2} \nonumber\\
		&\leq C\| 
		\u_{0}\|_{\mathbb{H}^{2}(\mathcal{O})} \big\|\tilde{\v}\big\|^{1/4}_{\mathrm{L}^{2}\left(0,\tau;\mathbb{H}^{2}(\mathcal{O})\right)}\big\|\tilde{\v}\big\|^{3/4}_{\mathrm{L}^{2}\left(0,\tau;\mathbb{L}^{2}(\mathcal{O})\right)}.
		\end{align}
		Similarly for $G_{32}$, we find
		\begin{align}\label{G32}
		\|G_{32}\|_{\mathrm{L}^{2}(0,\tau;\mathbb{L}^{2}(\mathcal{O}))}&\leq \left\|\|\tilde{\v}\|_{\mathbb{L}^{3}(\mathcal{O})} \left\|\int_{0}^{t}\nabla \tilde{\v}(s)\d s\right\|_{\mathbb{L}^{6}(\mathcal{O})}\right\|_{\mathrm{L}^{2}(0,\tau)}\nonumber
		\\&\leq \|\tilde{\v}\|_{\mathrm{L}^{2}(0,\tau;\mathbb{L}^{3}(\mathcal{O}))} \left\|\int_{0}^{t}\nabla \tilde{\v}(s)\d s\right\|_{\mathrm{L}^{\infty}(0,\tau;\mathbb{L}^{6}(\mathcal{O}))}\nonumber
		\\& \leq C \|\tilde{\v}\|_{\mathrm{L}^{2}(0,\tau;\mathbb{H}^{2}(\mathcal{O}))}^{1/4}\|\tilde{\v}\|_{\mathrm{L}^{2}(0,\tau;\mathbb{L}^{2}(\mathcal{O}))}^{3/4}\int_{0}^{\tau}\|\tilde{\v}(s)\|_{\mathbb{H}^{2}(\mathcal{O})}\d s \nonumber
		\\ &\leq C\tau^{1/2}\|\tilde{\v}\|_{\mathrm{L}^{2}(0,\tau;\mathbb{H}^{2}(\mathcal{O}))}^{5/4}\|\tilde{\v}\|_{\mathrm{L}^{2}(0,\tau;\mathbb{L}^{2}(\mathcal{O}))}^{3/4}.
		\end{align}
		Finally, we estimate $G_4$ as 
		\begin{align}\label{G4}
		\|G_{4}\|_{\mathrm{L}^{2}(0,\tau;\mathbb{L}^{2}(\mathcal{O}))}& \leq \left\|\left\| \u_{0}+\int_{0}^{t}\tilde{\v}(s)\d s\right\|_{\mathbb{L}^{\infty}(\mathcal{O})}\left\|\nabla \tilde{\v}\right\|_{\mathbb{L}^{2}(\mathcal{O})}\right\|_{\mathrm{L}^{2}(0,\tau)}\nonumber\\
		& \leq \left\| \u_{0}+\int_{0}^{t}\tilde{\v}(s)\d s\right\|_{\mathrm{L}^{\infty}(0,\tau;\mathbb{L}^{\infty}(\mathcal{O}))}\left\|\nabla \tilde{\v}\right\|_{\mathrm{L}^{2}(0,\tau;\mathbb{L}^{2}(\mathcal{O}))}\nonumber\\
		&\leq C\left(\| \u_{0}\|_{\mathbb{H}^{2}(\mathcal{O})}+\int_{0}^{\tau}\|\tilde{\v}(s)\|_{\mathbb{H}^{2}(\mathcal{O})}\d s\right)\|\nabla \tilde{\v}\|_{\mathrm{L}^{2}(0,\tau;\mathbb{L}^{2}(\mathcal{O}))}\nonumber\\
		&\leq C\left(\| \u_{0}\|_{\mathbb{H}^{2}(\mathcal{O})}+\tau^{1/2}\|\tilde{\v}\|_{\mathrm{L}^{2}\left(0,\tau;\mathbb{H}^{2}(\mathcal{O})\right)}\right)\| \tilde{\v}\|_{\mathrm{L}^{2}(0,\tau;\mathbb{H}^{2}(\mathcal{O}))}^{1/2}\| \tilde{\v}\|_{\mathrm{L}^{2}(0,\tau;\mathbb{L}^{2}(\mathcal{O}))}^{1/2}.
		\end{align}
		Combining  \eqref{G}-\eqref{G4} and making use of \eqref{4.5} and definition of the space $\mathcal{V}(\tau,L)$, we obtain the following estimate
		\begin{align}\label{4.8}
		& \|G\|_{\mathrm{L}^{2}(0,\tau;\mathbb{L}^{2}(\mathcal{O}))} \nonumber\\&\leq C \bigg\{\left(\| \u_{0}\|_{\mathbb{H}^{2}(\mathcal{O})}+\tau^{1/2}\|\tilde{\v}\|_{\mathrm{L}^{2}(0,\tau;\mathbb{H}^{2}(\mathcal{O}))}\right)\| k\|_{\mathrm{L}^{2}(0,\tau)}+\| \u_{0}\|_{\mathbb{H}^{2}(\mathcal{O})}\big\|\tilde{\v}\big\|^{1/4}_{\mathrm{L}^{2}\left(0,\tau;\mathbb{H}^{2}(\mathcal{O})\right)}\|\tilde{\v}\|^{3/4}_{\mathrm{L}^{2}\left(0,\tau;\mathbb{L}^{2}(\mathcal{O})\right)}\nonumber\\&\quad+\tau^{1/2}\big\|\tilde{\v}\big\|^{5/4}_{\mathrm{L}^{2}\left(0,\tau;\mathbb{H}^{2}(\mathcal{O})\right)}\|\tilde{\v}\|^{3/4}_{\mathrm{L}^{2}\left(0,\tau;\mathbb{L}^{2}(\mathcal{O})\right)}\nonumber+\left(\| \u_{0}\|_{\mathbb{H}^{2}(\mathcal{O})}+\tau^{1/2}\|\tilde{\v}\|_{\mathrm{L}^{2}(0,\tau;\mathbb{H}^{2}(\mathcal{O}))}\right)\\&\quad \times\|\tilde{\v}\|_{\mathrm{L}^{2}(0,\tau;\mathbb{H}^{2}(\mathcal{O}))}^{1/2}\| \tilde{\v}\|_{\mathrm{L}^{2}(0,\tau;\mathbb{L}^{2}(\mathcal{O}))}^{1/2}\bigg\}
		\nonumber\\
		&\leq C\bigg\{ \left(\| \u_{0}\|_{\mathbb{H}^{2}(\mathcal{O})}+\tau^{1/2}L\right)\|k\|_{\mathrm{L}^{2}(0,\tau)}+ (\tau^{3/8}L^{1/4}\|\u_{0}\|_{\mathbb{H}^{2}(\mathcal{O})}+\tau^{7/8}L^{5/4})\left(\|\v_{0}\|_{\mathbb{L}^{2}(\mathcal{O})}+\tau^{1/2}L\right)^{3/4}\nonumber\\&\quad+\tau^{1/4}L^{1/2}\left(\|\u_{0}\|_{\mathbb{H}^{2}(\mathcal{O})}+\tau^{1/2}L\right)\left(\|\v_{0}\|_{\mathbb{L}^{2}(\mathcal{O})}+\tau^{1/2}L\right)^{1/2}\bigg\}.
		\end{align}
		If we take $\tau >0$ small enough such that 
		\begin{align}\label{4.9}
		\tau(1+L+L^{2}) \leq 1 ,
		\end{align}
		estimates \eqref{4.6}-\eqref{4.8}, transform to
		\begin{align*}
		&\|\v\|_{\mathrm{H}^{1}(0,\tau;\mathbb{H}_{0}^{1}(\mathcal{O}) \cap \mathbb{H}^{2}(\mathcal{O}))} +\|k\|_{\mathrm{L}^{2}(0,\tau)}\\
		&\leq C\big(\|\v_{0}\|_{\mathbb{H}_{0}^{1}(\mathcal{O}) \cap \mathbb{H}^{2}(\mathcal{O})}+\|G\|_{\mathrm{L}^{2}(0,\tau;\mathbb{L}^{2}(\mathcal{O}))}\big) +\|k\|_{\mathrm{L}^{2}(0,\tau)}\\
		&\leq C\bigg\{\|\v_{0}\|_{\mathbb{H}_{0}^{1}(\mathcal{O}) \cap \mathbb{H}^{2}(\mathcal{O})}+\left(\| \u_{0}\|_{\mathbb{H}^{2}(\mathcal{O})}+1\right)\|k\|_{\mathrm{L}^{2}(0,\tau)}+(\|\u_{0}\|_{\mathbb{H}^{2}(\mathcal{O})}+1)\left(\|\v_{0}\|_{\mathbb{L}^{2}(\mathcal{O})}+1\right)^{3/4}\\&\quad+\left(\|\u_{0}\|_{\mathbb{H}^{2}(\mathcal{O})}+1\right)\left(\|\v_{0}\|_{\mathbb{L}^{2}(\mathcal{O})}+1\right)^{1/2}\bigg\}\\
		&\leq C\bigg[\|\v_{0}\|_{\mathbb{H}_{0}^{1}(\mathcal{O}) \cap \mathbb{H}^{2}(\mathcal{O})}+\alpha \left(\| \u_{0}\|_{\mathbb{H}^{2}(\mathcal{O})}+1\right)\bigg\{\|r''\|_{\mathrm{L}^{2}(0,\tau)}+\bigg[\mu_{0}\|\Delta \varphi\|_{\mathbb{L}^{2}(\mathcal{O})}+\|\Delta \varphi\|_{\mathbb{L}^{2}(\mathcal{O})}\\&\quad+\big(\|\u_{0}\|_{\mathbb{H}^{2}(\mathcal{O})}+1\big)\|\nabla \varphi \|_{\mathbb{L}^{2}(\mathcal{O})}\bigg] \left(\|\v_{0}\|_{\mathbb{L}^{2}(\mathcal{O})}+1\right)\bigg\}+ (\|\u_{0}\|_{\mathbb{H}^{2}(\mathcal{O})}+1)\left(\|\v_{0}\|_{\mathbb{L}^{2}(\mathcal{O})}+1\right)^{3/4}\nonumber\\&\quad+\left(\|\u_{0}\|_{\mathbb{H}^{2}(\mathcal{O})}+1\right)\left(\|\v_{0}\|_{\mathbb{L}^{2}(\mathcal{O})}+1\right)^{1/2}\bigg]=:L_{0}.
		\end{align*}
		We notice that $L_{0}=L_{0}(\tau)$ is non-decreasing. Therefore, we can define a mapping $ \Psi : \mathcal{V}(\tau,L) \to \mathcal{V}(\tau,L)$ by choosing $L \geq L_{0}$ and $\tau$ small enough as in \eqref{4.9}.
		\vskip2mm
		\noindent
		\textbf{Step 2}. In this step, we prove that $\Psi$ is a contraction mapping.
		For $j \in \{1,2\}$, let $(\tilde{\v}_{j}, \tilde{k}_{j}) \in \mathcal{V}(\tau,L) $ and define $k_{j}$ and $(\v_{j}, p_{j})$ by \eqref{4.1} and \eqref{4.2} with $(\tilde{\v}, \tilde{k})=(\tilde{\v}_{j}, \tilde{k}_{j})$, respectively. Then, 
		$(\v_{1}, p_{1}, k_{1})$ and $(\v_{2}, p_{2}, k_{2})$ satisfies
		\begin{equation}\label{4.10}
		\left\{
		\begin{aligned}
		\partial_{t}{(\v_{1}-\v_{2})}-\mu_{1} \partial_{t} \Delta (\v_{1}-\v_{2})-&\mu_{0} \Delta (\v_{1}-\v_{2}) - (k_{1}-k_{2})\Delta \u_{0}\\-\int_{0}^{t}(k_{1}-k_{2})(t-s) \Delta \tilde{\v}_{1}(s)\d s&-\int_{0}^{t}k_{2}(t-s) \Delta (\tilde{\v}_{1}-\tilde{\v}_{2})(s)\d s\\+\big((\tilde{\v}_{1}-\tilde{\v}_{2}) \cdot \nabla\big)\bigg(\u_{0}+\int_{0}^{t}\tilde {\v}_{1}(s)\d s\bigg)&+\big(\tilde{\v}_{2} \cdot \nabla\big)\bigg(\int_{0}^{t}(\tilde{\v}_{1}-\tilde{\v}_{2})(s)\d s\bigg)\\+\bigg(\int_{0}^{t}(\tilde{\v}_{1}-\tilde{\v}_{2})(s)\d s \cdot \nabla\bigg)\tilde{\v}_{1}&+\left[\left(\u_{0}+\int_{0}^{t}\tilde{\v}_{2}(s)\d s\right) \cdot \nabla\right](\tilde{\v}_{1}-\tilde{\v}_{2})\\+\nabla \partial_{t}{(p_{1}-p_{2})}&=\mathbf{0}, \ \ \mbox{ in }\ \mathcal{O} \times (0,\tau), \\
		\nabla \cdot (\v_{1}-\v_{2})&=0,  \ \  \mbox{ in }\ \mathcal{O} \times (0,\tau), \\
		\v_{1}-\v_{2}&=\mathbf{0}, \ \  \mbox{ on }\ \partial \mathcal{O} \times [0,\tau), \\
		\v_{1}-\v_{2}&=\mathbf{0}, \ \  \mbox{ in }\ \mathcal{O} \times \{0\}
		\end{aligned}
		\right.
		\end{equation}
		and
		\begin{equation}\label{4.11}
		\begin{aligned}
		&(k_{1}-k_{2})(t)\\=&-\alpha \bigg\{\underbrace{\mu_{0} \int_{\mathcal{O}}(\tilde{\v}_{1}-\tilde{\v}_{2}) \cdot \Delta \varphi \d x}_{:=B_1} +\underbrace{\int_{0}^{t}\int_{\mathcal{O}}(\tilde{k}_{1}-\tilde{k}_{2})(t-s) \tilde{\v}_{1} \cdot\Delta \varphi \d x \d s}_{:=B_2}\\&+\underbrace{\int_{0}^{t}\int_{\mathcal{O}}\tilde{k}_{2}(t-s)(\tilde{\v}_{1}-\tilde{\v}_{2}) \cdot\Delta \varphi \d x \d s}_{:=B_3}+\underbrace{\int_{\mathcal{O}}\big[\big((\tilde{\v}_{1}-\tilde{\v}_{2})\cdot \nabla\big)\varphi\big]\cdot\left( \u_{0}+\int_{0}^{t}\tilde{\v}_{1}(s)\d s\right)\d x}_{:=B_4}\\&+\underbrace{\int_{\mathcal{O}}\big((\tilde{\v}_{2}\cdot \nabla)\varphi\big) \cdot \left(\int_{0}^{t}(\tilde{\v}_{1}-\tilde{\v}_{2})(s)\d s\right)\d x}_{:=B_5}+\underbrace{\int_{\mathcal{O}}\left[\left(\int_{0}^{t}(\tilde{\v}_{1}-\tilde{\v}_{2})(s)\d s \cdot \nabla\right)\varphi\right]\cdot \tilde{\v}_{1}\d x}_{:=B_6}\\&+\underbrace{\int_{\mathcal{O}}\left[\left(\left(\u_{0}+\int_{0}^{t}\tilde{\v}_{2}(s)\d s\right) \cdot \nabla\right) \varphi\right]\cdot (\tilde{\v}_{1}-\tilde{\v}_{2})\d x}_{:=B_7}\bigg\}.
		\end{aligned}
		\end{equation}
		Let us now estimate $(k_{1}-k_{2})$ from the equation \eqref{4.11}. For this, our aim is to bound each individual term of $(k_{1}-k_{2})$, that is, we estimate $B_{i}\text{'s}, \ i=1,\ldots,7$.
		Applying H\"older's inequality, we have
		\begin{align*}
		\|B_{1}\|_{\mathrm{L}^{2}(0,\tau)}&\leq \mu_{0} \left(\int_{0}^{\tau}\|(\tilde{\v}_{1}-\tilde{\v}_{2})(s)\|_{\mathbb{L}^{2}(\mathcal{O})}^{2} \| \Delta \varphi \|_{\mathbb{L}^{2}(\mathcal{O})}^{2}\d s\right)^{1/2}\\
		&= \mu_{0} \| \tilde{\v}_{1}-\tilde{\v}_{2}\|_{\mathrm{L}^{2}(0,\tau;\mathbb{L}^{2}(\mathcal{O}))} \| \Delta \varphi \|_{\mathbb{L}^{2}(\mathcal{O})}.
		\end{align*}
		Using Lemma \ref{Lemma1}, we obtain
		\begin{align*}
		\|B_{2}\|_{\mathrm{L}^{2}(0,\tau)} &\leq \|\tilde{k}_{1}-\tilde{k}_{2}\|_{\mathrm{L}^{1}(0,\tau)} \left\|\int_{\mathcal{O}} \tilde{\v}_{1} \cdot\Delta \varphi \d x\right\|_{\mathrm{L}^{2}(0,\tau)}\\
		&\leq \tau^{1/2}\|\tilde{k}_{1}-\tilde{k}_{2}\|_{\mathrm{L}^{2}(0,\tau)} \| \tilde{\v}_{1}\|_{\mathrm{L}^{2}(0,\tau;\mathbb{L}^{2}(\mathcal{O}))} \| \Delta \varphi \|_{\mathbb{L}^{2}(\mathcal{O})},
		\end{align*}
		and 
		\begin{align*}
		\|B_{3}\|_{\mathrm{L}^{2}(0,\tau)}\leq \tau^{1/2}\|\tilde{k}_{2}\|_{\mathrm{L}^{2}(0,\tau)} \| \tilde{\v}_{1}-\tilde{\v}_{2}\|_{\mathrm{L}^{2}(0,\tau;\mathbb{L}^{2}(\mathcal{O}))} \| \Delta \varphi \|_{\mathbb{L}^{2}(\mathcal{O})}.
		\end{align*}
		Using H\"older's inequality and Sobolev embedding theorem, we have
		\begin{align*}
		\|B_{4}\|_{\mathrm{L}^{2}(0,\tau)}&\leq  \bigg\|\|\tilde{\v}_{1}-\tilde{\v}_{2}\|_{\mathbb{L}^{2}(\mathcal{O})}\| \nabla\varphi\|_{\mathbb{L}^{2}(\mathcal{O})} \left\|\u_{0}+\int_{0}^{t}\tilde{\v}_{1}(s)\d s\right\|_{\mathbb{L}^{\infty}(\mathcal{O})}\bigg\|_{\mathrm{L}^{2}(0,\tau)}\\
		&\leq  \|\tilde{\v}_{1}-\tilde{\v}_{2}\|_{\mathrm{L}^{2}(0,\tau;\mathbb{L}^{2}(\mathcal{O}))}\| \nabla\varphi\|_{\mathbb{L}^{2}(\mathcal{O})} \left\|\u_{0}+\int_{0}^{t}\tilde{\v}_{1}(s)\d s\right\|_{\mathrm{L}^{\infty}(0,\tau;\mathbb{L}^{\infty}(\mathcal{O}))}\\
		&\leq C\left(\|\u_{0}\|_{\mathbb{H}^{2}(\mathcal{O})}+\int_{0}^{\tau}\|\tilde{\v}_{1}(s)\|_{\mathbb{H}^{2}(\mathcal{O})}\d s\right)\|\tilde{\v}_{1}-\tilde{\v}_{2}\|_{\mathrm{L}^{2}(0,\tau;\mathbb{L}^{2}(\mathcal{O}))}\| \nabla\varphi\|_{\mathbb{L}^{2}(\mathcal{O})}\\
		&\leq C\big(\|\u_{0}\|_{\mathbb{H}^{2}(\mathcal{O})}+\tau^{1/2}\|\tilde{\v}_{1}\|_{\mathrm{L}^{2}(0,\tau;\mathbb{H}^{2}(\mathcal{O}))}\big)\|\tilde{\v}_{1}-\tilde{\v}_{2}\|_{\mathrm{L}^{2}(0,\tau;\mathbb{L}^{2}(\mathcal{O}))}\| \nabla\varphi\|_{\mathbb{L}^{2}(\mathcal{O})}
		\end{align*}
		and
		\begin{align*}
		\|B_{5}\|_{\mathrm{L}^{2}(0,\tau)} 
		&\leq  \bigg\|\|\tilde{\v}_{2}\|_{\mathbb{L}^{2}(\mathcal{O})}\| \nabla\varphi\|_{\mathbb{L}^{2}(\mathcal{O})} \left\|\int_{0}^{t}(\tilde{\v}_{1}-\tilde{\v}_{2})(s)\d s\right\|_{\mathbb{L}^{\infty}(\mathcal{O})}\bigg\|_{\mathrm{L}^{2}(0,\tau)}\\
		&\leq \|\tilde{\v}_{2}\|_{\mathrm{L}^{2}(0,\tau;\mathbb{L}^{2}(\mathcal{O}))} \|\nabla \varphi\|_{\mathbb{L}^{2}(\mathcal{O})}\left\|\int_{0}^{t}(\tilde{\v}_{1}-\tilde{\v}_{2})(s)\d s\right\|_{\mathrm{L}^{\infty}(0,\tau;\mathbb{L}^{\infty}(\mathcal{O}))}\\
		&\leq \left(\int_{0}^{\tau}\|(\tilde{\v}_{1}-\tilde{\v}_{2})(s)\|_{\mathbb{H}^{2}(\mathcal{O})}\d s\right)
		\|\tilde{\v}_{2}\|_{\mathrm{L}^{2}(0,\tau;\mathbb{L}^{2}(\mathcal{O}))}\| \nabla\varphi\|_{\mathbb{L}^{2}(\mathcal{O})}\\
		&\leq C\tau^{1/2}\|\tilde{\v}_{1}-\tilde{\v}_{2}\|_{\mathrm{L}^{2}(0,\tau;\mathbb{H}^{2}(\mathcal{O}))}\|\tilde{\v}_{2}\|_{\mathrm{L}^{2}(0,\tau;\mathbb{L}^{2}(\mathcal{O}))}\| \nabla\varphi\|_{\mathbb{L}^{2}(\mathcal{O})}.
		\end{align*}
		Applying similar argument as in the estimate of $B_5$, we get 
		\begin{align*}
		\|B_{6}\|_{\mathrm{L}^{2}(0,\tau)} 
		&\leq \left\|\left\|\int_{0}^{t}(\tilde{\v}_{1}-\tilde{\v}_{2})(s)\d s\right\|_{\mathbb{L}^{\infty}(\mathcal{O})}\|\nabla\varphi\|_{\mathbb{L}^{2}(\mathcal{O})}\|\tilde{\v}_{1}\|_{\mathbb{L}^{2}(\mathcal{O})}\right\|_{\mathrm{L}^{2}(0,\tau)} \\
		&\leq C\tau^{1/2} \|\tilde{\v}_{1}-\tilde{\v}_{2}\|_{\mathrm{L}^{2}(0,\tau;\mathbb{H}^{2}(\mathcal{O}))}\|\nabla\varphi\|_{\mathbb{L}^{2}(\mathcal{O})}\|\tilde{\v}_{1}\|_{\mathrm{L}^{2}(0,\tau;\mathbb{L}^{2}(\mathcal{O}))}.
		\end{align*}
		Applying similarly argument as for $B_4$, we find
		\begin{align*}
		\|B_{7}\|_{\mathrm{L}^{2}(0,\tau)}&\leq
		C\left(\|\u_{0}\|_{\mathbb{H}^{2}(\mathcal{O})}+\tau^{1/2}\|\tilde{\v}_{2}\|_{\mathrm{L}^{2}(0,\tau;\mathbb{H}^{2}(\mathcal{O}))}\right)\|\nabla\varphi\|_{\mathbb{L}^{2}(\mathcal{O})}\|\tilde{\v}_{1}-\tilde{\v}_{2}\|_{\mathrm{L}^{2}(0,\tau;\mathbb{L}^{2}(\mathcal{O}))}.
		\end{align*}
		Combining the estimates for $B_{i}$'s, one can conclude that
		\begin{align*}
		&\|k_{1}-k_{2}\|_{\mathrm{L}^{2}(0,\tau)} \\&\leq \alpha \bigg\{\mu_{0}\| \Delta \varphi \|_{\mathbb{L}^{2}(\mathcal{O})} \| \tilde{\v}_{1}-\tilde{\v}_{2}\|_{\mathrm{L}^{2}(0,\tau;\mathbb{L}^{2}(\mathcal{O}))}+\tau^{1/2}\| \Delta \varphi \|_{\mathbb{L}^{2}(\mathcal{O})}\|\tilde{k}_{1}-\tilde{k}_{2}\|_{\mathrm{L}^{2}(0,\tau)} \| \tilde{\v}_{1}\|_{\mathrm{L}^{2}(0,\tau;\mathbb{L}^{2}(\mathcal{O}))} \\&\quad+\tau^{1/2}\| \Delta \varphi \|_{\mathbb{L}^{2}(\mathcal{O})}\|\tilde{k}_{2}\|_{\mathrm{L}^{2}(0,\tau)} \| \tilde{\v}_{1}-\tilde{\v}_{2}\|_{\mathrm{L}^{2}(0,\tau;\mathbb{L}^{2}(\mathcal{O}))} \nonumber\\&\quad+C\bigg[2\|\u_{0}\|_{\mathbb{H}^{2}(\mathcal{O})}+\tau^{1/2}\bigg(\|\tilde{\v}_{1}\|_{\mathrm{L}^{2}(0,\tau;\mathbb{H}^{2}(\mathcal{O}))}+\|\tilde{\v}_{2}\|_{\mathrm{L}^{2}(0,\tau;\mathbb{H}^{2}(\mathcal{O}))}\bigg)\bigg]\nonumber\\&\qquad\times\| \nabla\varphi\|_{\mathbb{L}^{2}(\mathcal{O})}\|\tilde{\v}_{1}-\tilde{\v}_{2}\|_{\mathrm{L}^{2}(0,\tau;\mathbb{L}^{2}(\mathcal{O}))}\\&\quad+C\tau^{1/2}\bigg(\|\tilde{\v}_{1}\|_{\mathrm{L}^{2}(0,\tau;\mathbb{L}^{2}(\mathcal{O}))}+\|\tilde{\v}_{2}\|_{\mathrm{L}^{2}(0,\tau;\mathbb{L}^{2}(\mathcal{O}))}\bigg)\| \nabla\varphi\|_{\mathbb{L}^{2}(\mathcal{O})}\|\tilde{\v}_{1}-\tilde{\v}_{2}\|_{\mathrm{L}^{2}(0,\tau;\mathbb{H}^{2}(\mathcal{O}))}\bigg\}.
		\end{align*}
		Using Lemma \ref{Lemma2} and the definition of the space $\mathcal{V}(\tau,L)$, we obtain the following estimate
		\begin{align}\label{4.12}
		&\|k_{1}-k_{2}\|_{\mathrm{L}^{2}(0,\tau)}\nonumber  
		\\&\leq \alpha \bigg\{ \mu_{0}\tau \| \Delta \varphi \|_{\mathbb{L}^{2}(\mathcal{O})}\| \partial_{t}(\tilde{\v}_{1}-\tilde{\v}_{2})\|_{\mathrm{L}^{2}(0,\tau;\mathbb{L}^{2}(\mathcal{O}))} +\tau^{1/2}L\| \Delta \varphi \|_{\mathbb{L}^{2}(\mathcal{O})}\|\tilde{k}_{1}-\tilde{k}_{2}\|_{\mathrm{L}^{2}(0,\tau)} \nonumber \\&\quad+\tau^{3/2}L\| \Delta \varphi \|_{\mathbb{L}^{2}(\mathcal{O})} \|\partial_{t} (\tilde{\v}_{1}-\tilde{\v}_{2})\|_{\mathrm{L}^{2}(0,\tau;\mathbb{L}^{2}(\mathcal{O}))} \nonumber\\&\quad +C\bigg(2\|\u_{0}\|_{\mathbb{H}^{2}(\mathcal{O})}+2\tau^{1/2}L\bigg) \| \nabla\varphi\|_{\mathbb{L}^{2}(\mathcal{O})} \tau\|\partial_{t}(\tilde{\v}_{1}-\tilde{\v}_{2})\|_{\mathrm{L}^{2}(0,\tau;\mathbb{L}^{2}(\mathcal{O}))}\nonumber\\&\quad+C\tau^{1/2}L\| \nabla\varphi\|_{\mathbb{L}^{2}(\mathcal{O})}\|\tilde{\v}_{1}-\tilde{\v}_{2}\|_{\mathrm{L}^{2}(0,\tau;\mathbb{H}^{2}(\mathcal{O}))}\bigg\}	\nonumber\\&\leq \nonumber\alpha \tau^{1/2} \bigg\{\bigg(\mu_{0} \tau^{1/2}  + \tau L\bigg) \| \Delta \varphi \|_{\mathbb{L}^{2}(\mathcal{O})}+C\bigg(\tau^{1/2}\|\u_{0}\|_{\mathbb{H}^{2}(\mathcal{O})}+\tau L+L\bigg) \nonumber\| \nabla\varphi\|_{\mathbb{L}^{2}(\mathcal{O})} \bigg\}\nonumber\\&\quad\times\| \tilde{\v}_{1}-\tilde{\v}_{2}\|_{\mathrm{H}^{1}(0,\tau;\mathbb{H}_{0}^{1}(\mathcal{O}) \cap \mathbb{H}^{2}(\mathcal{O}))} +\alpha\tau^{1/2}L\| \Delta \varphi \|_{\mathbb{L}^{2}(\mathcal{O})}\|\tilde{k}_{1}-\tilde{k}_{2}\|_{\mathrm{L}^{2}(0,\tau)}.
		\end{align}
		Again, using the estimates for the linear problem \eqref{4.10}, we estimate
		\begin{align}\label{4.13}
		\|\v_{1}-\v_{2}\|_{\mathrm{H}^{1}(0,\tau;\mathbb{H}_{0}^{1}(\mathcal{O}) \cap \mathbb{H}^{2}(\mathcal{O}))} \leq C \|E\|_{\mathrm{L}^{2}(0,\tau;\mathbb{L}^{2}(\mathcal{O}))},
		\end{align}
		where
		\begin{align*}
		E&:= \underbrace{(k_{1}-k_{2})\Delta \u_{0}}_{:=E_1}+\underbrace{\int_{0}^{t}(k_{1}-k_{2})(t-s) \Delta \tilde{\v}_{1}(s)\d s}_{:=E_2}+\underbrace{\int_{0}^{t}k_{2}(t-s) \Delta (\tilde{\v}_{1}-\tilde{\v}_{2})(s)\d s}_{E_3}\\&\quad\underbrace{-\big((\tilde{\v}_{1}-\tilde{\v}_{2}) \cdot \nabla\big)\left(\u_{0}+\int_{0}^{t}\tilde {\v}_{1}(s)\d s\right)}_{:=E_4}\underbrace{-\big(\tilde{\v}_{2} \cdot \nabla\big)\left(\int_{0}^{t}(\tilde{\v}_{1}-\tilde{\v}_{2})(s)\d s\right)}_{:=E_5}\\&\quad\underbrace{-\left(\int_{0}^{t}(\tilde{\v}_{1}-\tilde{\v}_{2})(s)\d s \cdot \nabla\right)\tilde{\v}_{1}}_{:=E_6}\underbrace{-\left[ \left(\u_{0}+\int_{0}^{t}\tilde {\v}_{2}(s)\d s\right) \cdot \nabla\right](\tilde{\v}_{1}-\tilde{\v}_{2})}_{:=E_7} .
		\end{align*}
		For $E_{j} \ (j=1,2,\ldots,7)$, we obtain the following estimates:
		\begin{align*}
		\|E_{1}\|_{\mathrm{L}^{2}(0,\tau;\mathbb{L}^{2}(\mathcal{O}))}=\|k_{1}-k_{2}\|_{\mathrm{L}^{2}(0,\tau)}\|\Delta \u_{0}\|_{\mathbb{L}^{2}(\mathcal{O})}\leq C\|k_{1}-k_{2}\|_{\mathrm{L}^{2}(0,\tau)}\| \u_{0}\|_{\mathbb{H}^{2}(\mathcal{O})}.
		\end{align*}
		Using Lemma \ref{Lemma1}, we get
		\begin{align*}
		\|E_{2}\|_{\mathrm{L}^{2}(0,\tau;\mathbb{L}^{2}(\mathcal{O}))}\leq
		C\tau^{1/2}\|k_{1}-k_{2}\|_{\mathrm{L}^{2}(0,\tau)}\| \tilde{\v}_{1}\|_{\mathrm{L}^{2}(0,\tau;\mathbb{H}^{2}(\mathcal{O}))},
		\end{align*}
		and
		\begin{align*}
		\|E_{3}\|_{\mathrm{L}^{2}(0,\tau;\mathbb{L}^{2}(\mathcal{O}))}\leq C\tau^{1/2}\|k_{2}\|_{\mathrm{L}^{2}(0,\tau)}\| \tilde{\v}_{1}- \tilde{\v}_{2}\|_{\mathrm{L}^{2}(0,\tau;\mathbb{H}^{2}(\mathcal{O}))}. 
		\end{align*}
		By an argument similar to the estimate of $\|G_{3}\|_{\mathrm{L}^{2}(0,\tau;\mathbb{L}^{2}(\mathcal{O}))}$, we obtain
		\begin{align*}
		E_{4}= \underbrace{\big((\tilde{\v}_{1}-\tilde{\v}_{2}) \cdot \nabla\big)\u_{0}}_{:=E_{41}}+\underbrace{\big((\tilde{\v}_{1}-\tilde{\v}_{2}) \cdot \nabla\big)\int_{0}^{t}\tilde {\v}_{1}(s)\d s}_{:=E_{42}}
		\end{align*}
		\begin{align*}
		\|E_{41}\|_{\mathrm{L}^{2}(0,\tau;\mathbb{L}^{2}(\mathcal{O}))}
		&\leq C\|\u_{0}\|_{\mathbb{H}^{2}(\mathcal{O})}\|\tilde{\v}_{1}-\tilde{\v}_{2} \|_{\mathrm{L}^{2}(0,\tau;\mathbb{H}^{2}(\mathcal{O}))}^{1/4}
		\|\tilde{\v}_{1}-\tilde{\v}_{2} \|_{\mathrm{L}^{2}\left(0,\tau;\mathbb{L}^{2}(\mathcal{O})\right)}^{3/4},\\
		\|E_{42}\|_{\mathrm{L}^{2}(0,\tau;\mathbb{L}^{2}(\mathcal{O}))}\nonumber&\leq \left\|\|\tilde{\v}_{1}-\tilde{\v}_{2}\|_{\mathbb{L}^{3}(\mathcal{O})}\left\|\int_{0}^{t}\nabla\tilde{\v}_{1}(s)\d s\right\|_{\mathbb{L}^{6}(\mathcal{O})}\right\|_{\mathrm{L}^{2}(0,\tau)}
		\\& \leq
		\|\tilde{\v}_{1}-\tilde{\v}_{2}\|_{\mathrm{L}^{2}(0,\tau;\mathbb{L}^{3}(\mathcal{O}))}\left\|\int_{0}^{t}\nabla\tilde{\v}_{1}(s)\d s\right\|_{\mathrm{L}^{\infty}(0,\tau;\mathbb{L}^{6}(\mathcal{O}))}\\&\leq C \|\tilde{\v}_{1}-\tilde{\v}_{2}\|_{\mathrm{L}^{2}(0,\tau;\mathbb{H}^{2}(\mathcal{O}))}^{1/4} \|\tilde{\v}_{1}-\tilde{\v}_{2}\|_{\mathrm{L}^{2}(0,\tau;\mathbb{L}^{2}(\mathcal{O}))}^{3/4}\int_{0}^{\tau}\| \tilde{\v}_{1}(s)\|_{\mathbb{H}^{2}(\mathcal{O})}\d s\nonumber\\
		&\leq C\tau^{1/2}\|\tilde{\v}_{1}\|_{\mathrm{L}^{2}(0,\tau;\mathbb{H}^{2}(\mathcal{O}))}\|\tilde{\v}_{1}-\tilde{\v}_{2}\|_{\mathrm{L}^{2}(0,\tau;\mathbb{H}^{2}(\mathcal{O}))}^{1/4} \|\tilde{\v}_{1}-\tilde{\v}_{2}\|_{\mathrm{L}^{2}(0,\tau;\mathbb{L}^{2}(\mathcal{O}))}^{3/4}.
		\end{align*}
		Also, we have 
		\begin{align*}
		\|E_{5}\|_{\mathrm{L}^{2}(0,\tau;\mathbb{L}^{2}(\mathcal{O}))}&\leq \left \|\|\tilde{\v}_{2}\|_{\mathbb{L}^{\infty}(\mathcal{O})}\left\|\int_{0}^{t}\nabla(\tilde{\v}_{1}-\tilde{\v}_{2})(s)\d s\right\|_{\mathbb{L}^{2}(\mathcal{O})}\right \|_{\mathrm{L}^{2}(0,\tau)} \\ &\leq C \left\|\|\tilde{\v}_{2}\|_{\mathbb{H}^{2}(\mathcal{O})}\left\|\int_{0}^{t}\nabla(\tilde{\v}_{1}-\tilde{\v}_{2})(s)\d s\right\|_{\mathbb{L}^{2}(\mathcal{O})} \right\|_{\mathrm{L}^{2}(0,\tau)}\\              &\leq C\left( \int_{0}^{\tau}\|(\tilde{\v}_{1}-\tilde{\v}_{2})(s)\|_{\mathbb{H}^{1}_{0}(\mathcal{O})}\d s\right)\|\tilde{\v}_{2}\|_{\mathrm{L}^{2}(0,T;\mathbb{H}^{2}(\mathcal{O}))}\\
		&\leq C\tau^{1/2}\|\tilde{\v}_{1}-\tilde{\v}_{2}\|_{\mathrm{L}^{2}(0,\tau;\mathbb{H}^{1}_{0}(\mathcal{O}))}\|\tilde{\v}_{2}\|_{\mathrm{L}^{2}(0,\tau;\mathbb{H}^{2}(\mathcal{O}))},
		\end{align*}
		where we used H\"older's inequality and Sobolev embedding theorem. Similarly, we find
		\begin{align*}
		\|E_{6}\|_{\mathrm{L}^{2}(0,\tau;\mathbb{L}^{2}(\mathcal{O}))}
		&\leq
		\left\|\left\|\int_{0}^{t}(\tilde{\v}_{1}-\tilde{\v}_{2})(s)\d s\right\|_{\mathbb{L}^{\infty}(\mathcal{O})}\| \nabla\tilde{\v}_{1}\|_{\mathbb{L}^{2}(\mathcal{O})}\right\|_{\mathrm{L}^{2}(0,\tau)}\\
		&\leq \left(
		\int_{0}^{\tau}\|(\tilde{\v}_{1}-\tilde{\v}_{2})(s)\|_{\mathbb{L}^{\infty}(\mathcal{O})}\d s\right) \|\nabla\tilde{\v}_{1}\|_{\mathrm{L}^{2}(0,\tau;\mathbb{L}^{2}(\mathcal{O}))}\\
		&\leq C\left(
		\int_{0}^{\tau}\|(\tilde{\v}_{1}-\tilde{\v}_{2})(s)\|_{\mathbb{H}^{2}(\mathcal{O})}\d s\right) \|\tilde{\v}_{1}\|_{\mathrm{L}^{2}(0,\tau;\mathbb{H}_{0}^{1}(\mathcal{O}))}\\
		&\leq  C\tau^{1/2}\|\tilde{\v}_{1}-\tilde{\v}_{2}\|_{\mathrm{L}^{2}\left(0,\tau;\mathbb{H}^{2}(\mathcal{O})\right)}\|\tilde{\v}_{1}\|_{\mathrm{L}^{2}(0,\tau;\mathbb{H}^{1}_{0}(\mathcal{O}))}.
		\end{align*}
		Using H\"older's inequality, Sobolev embedding theorem and Gagliardo-Nirenberg's inequality, we get
		\begin{align*}
		\|E_{7}\|_{\mathrm{L}^{2}(0,\tau;\mathbb{L}^{2}(\mathcal{O}))}
		&\leq \left\|\left\|\u_{0}+\int_{0}^{t}\tilde {\v}_{2}(s)\d s\right\|_{\mathbb{L}^{\infty}(\mathcal{O})} 
		\|\nabla(\tilde{\v_{1}}-\tilde{\v_{2}})\|_{\mathbb{L}^{2}(\mathcal{O}))}\right\|_{\mathrm{L}^{2}(0,\tau)}\\
		&\leq \left(\|\u_{0}\|_{\mathbb{L}^{\infty}(\mathcal{O})} +\int_{0}^{\tau}\|\tilde {\v}_{2}(s)\|_{\mathrm{L}^{\infty}(\mathcal{O})}\d s\right) \|\nabla(\tilde{\v_{1}}-\tilde{\v_{2}})\|_{\mathrm{L}^{2}(0,\tau;\mathbb{L}^{2}(\mathcal{O}))}\\
		&\leq
		C\left(\|\u_{0}\|_{\mathbb{H}^{2}(\mathcal{O})}+\tau^{1/2}\|\tilde{\v}_{2}\|_{\mathrm{L}^{2}(0,\tau;\mathbb{H}^{2}(\mathcal{O}))}\right)\|\tilde{\v}_{1}-\tilde{\v}_{2}\|_{\mathrm{L}^{2}(0,\tau;\mathbb{H}^{2}(\mathcal{O}))}^{1/2} \\& \quad\times\|\tilde{\v}_{1}-\tilde{\v}_{2}\|_{\mathrm{L}^{2}(0,\tau;\mathbb{L}^{2}(\mathcal{O}))}^{1/2}.
		\end{align*}
		Combining the estimates for $E_{i}$'s, one can conclude that
		\begin{align*}
		&\|E\|_{\mathrm{L}^{2}(0,\tau;\mathbb{L}^{2}(\mathcal{O}))} \\&\leq C \bigg\{\big(\| \u_{0}\|_{\mathbb{H}^{2}(\mathcal{O})}+\tau^{1/2}\| \tilde{\v}_{1}\|_{\mathrm{L}^{2}(0,\tau;\mathbb{H}^{2}(\mathcal{O}))}\big)\|k_{1}-k_{2}\|_{\mathrm{L}^{2}(0,\tau)}
		+\tau^{1/2}\|k_{2}\|_{\mathrm{L}^{2}(0,\tau)}\| \tilde{\v}_{1}- \tilde{\v}_{2}\|_{\mathrm{L}^{2}(0,\tau;\mathbb{H}^{2}(\mathcal{O}))}\\&\quad+(\|\u_{0}\|_{\mathbb{H}^{2}(\mathcal{O})}+\tau^{1/2}\|\tilde{\v}_{1}\|_{\mathrm{L}^{2}(0,\tau;\mathbb{H}^{2}(\mathcal{O}))})\|\tilde{\v}_{1}-\tilde{\v}_{2} \|_{\mathrm{L}^{2}(0,\tau;\mathbb{H}^{2}(\mathcal{O}))}^{1/4}\|\tilde{\v}_{1}-\tilde{\v}_{2} \|_{\mathrm{L}^{2}(0,\tau;\mathbb{L}^{2}(\mathcal{O}))}^{3/4}\\&\quad+\tau^{1/2}\|\tilde{\v}_{1}-\tilde{\v}_{2}\|_{\mathrm{L}^{2}(0,\tau;\mathbb{H}^{1}_{0}(\mathcal{O}))}\|\tilde{\v}_{2}\|_{\mathrm{L}^{2}(0,\tau;\mathbb{H}^{2}(\mathcal{O}))}+\tau^{1/2}\|\tilde{\v}_{1}-\tilde{\v}_{2}\|_{\mathrm{L}^{2}(0,\tau;\mathbb{H}^{2}(\mathcal{O}))}\|\tilde{\v}_{1}\|_{\mathrm{L}^{2}(0,\tau;\mathbb{H}^{1}_{0}(\mathcal{O}))}\\&\quad+\big(\|\u_{0}\|_{\mathbb{H}^{2}(\mathcal{O})}+\tau^{1/2}\|\tilde{\v}_{2}\|_{\mathrm{L}^{2}(0,\tau;\mathbb{H}^{2}(\mathcal{O}))}\big)\|\tilde{\v}_{1}-\tilde{\v}_{2}\|_{\mathrm{L}^{2}(0,\tau;\mathbb{H}^{2}(\mathcal{O}))}^{1/2}  \|\tilde{\v}_{1}-\tilde{\v}_{2}\|_{\mathrm{L}^{2}(0,\tau;\mathbb{L}^{2}(\mathcal{O}))}^{1/2}\bigg\}.
		\end{align*}
		Using Lemma \ref{Lemma2} and the definition of the space $\mathcal{V}(\tau,L)$, we obtain the following estimate:
		\begin{align}\label{4.14}
		\nonumber &\|E\|_{\mathrm{L}^{2}(0,\tau;\mathbb{L}^{2}(\mathcal{O}))} \\&\leq C\bigg\{\big(\| \u_{0}\|_{\mathbb{H}^{2}(\mathcal{O})}\nonumber+\tau^{1/2}L\big)\|k_{1}-k_{2}\|_{\mathrm{L}^{2}(0,\tau)}
		+\tau^{1/2}L\|\tilde{\v}_{1}- \tilde{\v}_{2}\|_{\mathrm{H}^{1}(0,\tau;\mathbb{H}_{0}^{1}(\mathcal{O}) \cap \mathbb{H}^{2}(\mathcal{O}))}\\&\quad+\tau^{3/4}(\|\u_{0}\|_{\mathbb{H}^{2}(\mathcal{O})}+\tau^{1/2}L)\|\partial_{t}(\tilde{\v}_{1}-\tilde{\v}_{2}) \|_{\mathrm{L}^{2}(0,\tau;\mathbb{L}^{2}(\mathcal{O}))}^{3/4}\|\tilde{\v}_{1}-\tilde{\v}_{2} \|_{\mathrm{L}^{2}(0,\tau;\mathbb{H}^{2}(\mathcal{O}))}^{1/4}\nonumber\\&\quad+2\tau^{1/2}L\|\tilde{\v}_{1}- \tilde{\v}_{2}\|_{\mathrm{H}^{1}(0,\tau;\mathbb{H}_{0}^{1}(\mathcal{O}) \cap \mathbb{H}^{2}(\mathcal{O}))}\nonumber\\&\quad +\tau^{1/2}\big(\|\u_{0}\|_{\mathbb{H}^{2}(\mathcal{O})}+\tau^{1/2}L\big) \|\tilde{\v}_{1}-\tilde{\v}_{2}\|_{\mathrm{L}^{2}(0,\tau;\mathbb{H}^{2}(\mathcal{O}))}^{1/2}\|\partial_{t}(\tilde{\v}_{1}-\tilde{\v}_{2})\|_{\mathrm{L}^{2}(0,\tau;\mathbb{L}^{2}(\mathcal{O}))}^{1/2}\bigg\} \nonumber\\
		&\leq C\bigg\{\big(\| \u_{0}\|_{\mathbb{H}^{2}(\mathcal{O})}\nonumber+\tau^{1/2}L\big)\|k_{1}-k_{2}\|_{\mathrm{L}^{2}(0,\tau)}\\&\quad +\tau^{1/2}\bigg[3L+\big(\|\u_{0}\|_{\mathbb{H}^{2}(\mathcal{O})}+\tau^{1/2}L\big)(\tau^{1/4}+1)\bigg]\|\tilde{\v}_{1}-\tilde{\v}_{2}\|_{\mathrm{H}^{1}(0,\tau;\mathbb{H}_{0}^{1}(\mathcal{O}) \cap \mathbb{H}^{2}(\mathcal{O}))}\bigg\}.
		\end{align}
		Thus, combining \eqref{4.12}-\eqref{4.14}, we finally arrive at
		\begin{align*}
		&\|\v_{1}-\v_{2}\|_{\mathrm{H}^{1}(0,\tau;\mathbb{H}_{0}^{1}(\mathcal{O}) \cap \mathbb{H}^{2}(\mathcal{O}))}+\|k_{1}-k_{2}\|_{\mathrm{L}^{2}(0,\tau)} 
		\\&\leq C\|E\|_{\mathrm{L}^{2}(0,\tau;\mathbb{L}^{2}(\mathcal{O}))} +\|k_{1}-k_{2}\|_{\mathrm{L}^{2}(0,\tau)}\\
		&\leq C \bigg\{\big(\| \u_{0}\|_{\mathbb{H}^{2}(\mathcal{O})}\nonumber+\tau^{1/2}L+1\big)\|k_{1}-k_{2}\|_{\mathrm{L}^{2}(0,\tau)}\\&\quad+\tau^{1/2}\bigg[3L+\big(\|\u_{0}\|_{\mathbb{H}^{2}(\mathcal{O})}+\tau^{1/2}L\big)(\tau^{1/4}+1)\bigg]\|\tilde{\v}_{1}-\tilde{\v}_{2}\|_{\mathrm{H}^{1}(0,\tau;\mathbb{H}_{0}^{1}(\mathcal{O}) \cap \mathbb{H}^{2}(\mathcal{O}))}\bigg\}\\
		&\leq C\tau^{1/2}\bigg[\big(\| \u_{0}\|_{\mathbb{H}^{2}(\mathcal{O})}+\tau^{1/2}L+1\big) \alpha\bigg\{\bigg(\mu_{0} \tau^{1/2}  + \tau L\bigg) \| \Delta \varphi \|_{\mathbb{L}^{2}(\mathcal{O})}\\&\quad+2\bigg(\tau^{1/2}\|\u_{0}\|_{\mathbb{H}^{2}(\mathcal{O})}+\tau L+L\bigg) \| \nabla\varphi\|_{\mathbb{L}^{2}(\mathcal{O})} \bigg\} \\&\quad+ \bigg\{3L+\big(\|\u_{0}\|_{\mathbb{H}^{2}(\mathcal{O})}+\tau^{1/2}L\big)(\tau^{1/4}+1)\bigg\}\bigg]\|\tilde{\v}_{1}-\tilde{\v}_{2}\|_{\mathrm{H}^{1}(0,\tau;\mathbb{H}_{0}^{1}(\mathcal{O}) \cap \mathbb{H}^{2}(\mathcal{O}))}\nonumber\\&\quad+ C\alpha\tau^{1/2}L
		\bigg(\| \u_{0}\|_{\mathbb{H}^{2}(\mathcal{O})}+\tau^{1/2}L+1\bigg)\| \Delta \varphi \|_{\mathbb{L}^{2}(\mathcal{O})}\|\tilde{k}_{1}-\tilde{k}_{2}\|_{\mathrm{L}^{2}(0,\tau)}.
		\end{align*}
		If we take $\tau >0$ small enough, then we have $\Psi \colon \mathcal{V}(\tau,L) \to \mathcal{V}(\tau,L) $ is a contraction mapping. Thus, from the contraction mapping theorem, one can conclude that for a small time $\tau$, there exists a unique solution $(\v,k) \in \mathrm{H}^{1}(0,\tau;\mathbb{H}_{0}^{1}(\mathcal{O}) \cap \mathbb{H}^{2}(\mathcal{O})) \times \mathrm{L}^{2}(0,\tau)$ to the equivalent system \eqref{3.3}-\eqref{3.7} in $[0,\tau]$, and the proof is completed.
	\end{proof}
	\begin{remark}
		Due to the presence of convective (nonlinear) term $(\u \cdot \nabla)\u$ in the Kelvin-Voigt fluid equation \eqref{1.1}, we are not able  to prove the global existence and uniqueness of solutions. The authors in  \cite{CG} obtained global solvability results for bounded nonlinearities only (cf. Lemma 8.3, \cite{CG}). Thus, one cannot apply the same techniques used in \cite{CG} to obtain the global solvability results. This problem will be addressed in a future work. 
	\end{remark}
	\section{Oseen Type Equations}\label{sec5}\setcounter{equation}{0}
	In order to make use of the techniques available in \cite{CG} for the global solvability results, we consider an inverse problem for  Oseen type equations corresponding to Kelvin-Voigt fluids with memory in this section. Let us first consider the following the Oseen type equations:
	\begin{align}
	\nonumber \partial_{t}{\u}-\mu_{1} \partial_{t} \Delta \u-\mu_{0} \Delta \u +(\u_{\infty}\cdot \nabla)\u& -\int_{0}^{t}k(t-s)\Delta \u(s) \d s \\ +\ \nabla p&=\mathbf{0},  \ \  \mbox{ in } \ \mathcal{O} \times (0,T),\label{5.111}  \\
	\nabla \cdot \u&=0,  \ \ \mbox{ in } \  \mathcal{O} \times (0,T),\label{5.112} \\
	\u&=\mathbf{0},  \ \ \mbox{ on } \ \partial \mathcal{O} \times [0,T),\label{5.113}  \\
	\u&=\u_{0},  \ \mbox{ in } \ \mathcal{O} \times \{0\}\label{5.114},
	\end{align}
	where $\u_{\infty}$ is a divergence free vector field with $\u_{\infty} \in \mathbb{H}_{0}^{1}(\mathcal{O}) \cap \mathbb{H}^{2}(\mathcal{O})$. 
	\subsection{The inverse problem}\label{IP2}
	For $T>0$, the inverse problem is to determine $\tau \in (0,T]$,
	\begin{align}\label{5.1}
	\u \in \mathrm{H}^{2}(0,\tau;\mathbb{H}_{0}^{1}(\mathcal{O}) \cap \mathbb{H}^{2}(\mathcal{O}))\ \mbox{ and }\ k \in \mathrm{L}^{2}(0,\tau),
	\end{align}
	such that $(\u,k)$ satisfies the system:
	\begin{subequations}
		\begin{align}
		\nonumber \partial_{t}{\u}-\mu_{1} \partial_{t} \Delta \u-\mu_{0} \Delta \u +(\u_{\infty}\cdot \nabla)\u& -\int_{0}^{t}k(t-s)\Delta \u(s) \d s \\ +\ \nabla p&=\mathbf{0},  \ \  \mbox{ in } \ \mathcal{O} \times (0,\tau),\label{5.2}  \\
		\nabla \cdot \u&=0,  \ \ \mbox{ in } \ \mathcal{O} \times (0,\tau),\label{5.3} \\
		\u&=\mathbf{0},  \ \ \mbox{ on } \ \partial \mathcal{O} \times [0,\tau),\label{5.4}  \\
		\u&=\u_{0},  \ \mbox{ in } \  \mathcal{O} \times \{0\},\label{5.5} \\
		\int_{\mathcal{O}}(I-\mu_{1} \Delta)\varphi(x) \cdot \u(x,t) \d x&=r(t), \  \ \text { in } \  (0,\tau).\label{5.6}
		\end{align}
	\end{subequations}
	We solve the inverse problem under the following assumptions on the data: 
	\begin{itemize}
		\item [(H1)] $\u_{0} \in \mathbb{H}_{0}^{1}(\mathcal{O}) \cap \mathbb{H}^{2}(\mathcal{O}), \ \nabla \cdot \u_{0}=0, \ \ \mbox{in}  \ \mathcal{O}.$
		\item[(H2)] $\varphi \in \mathbb{H}_{0}^{1}(\mathcal{O}) \cap \mathbb{H}^{2}(\mathcal{O}), \  \nabla \cdot \varphi=0, \ \ \mbox{ in }  \ \mathcal{O}.$
		\item[(H3)] $\alpha^{-1}:= \int_{\mathcal{O}} \varphi \cdot \Delta \u_{0} \d x \ne 0.$
		\item[(H4)] $\v_{0}:=(I-\mu_{1} \Delta)^{-1}\big(\mu_{0} \Delta \u_{0}-(\u_{\infty} \cdot \nabla)\u_{0}- \nabla p_{0}\big)  \in \mathbb{H}_{0}^{1}(\mathcal{O}),$ $\nabla \cdot \v_{0}=0, 
		\ \ \mbox{ in }\ \mathcal{O}, \\  p_{0} \in \mathrm{H}^{1}(\mathcal{O}).$
		\item[(H5)] $r \in \mathrm{H}^{2}(0,T), $ with  \begin{align*} \int_{\mathcal{O}}(I-\mu_{1} \Delta)\varphi(x) \cdot \u_{0}(x) \d x&=r(0), \\ \int_{\mathcal{O}}\big(\mu_{0} \Delta \u_{0}(x)-(\u_{\infty}(x) \cdot \nabla)\u_{0}(x)\big) \cdot \varphi(x) \d x &=r'(0).\end{align*}
	\end{itemize}
	The following theorem provides an equivalent form of the system \eqref{5.2}-\eqref{5.6}.  
	\begin{theorem}\label{Equi-form1}
		Let the assumptions $\emph{(H1)-(H5)}$ hold. Let $(\u,k)$ be a solution of the system \eqref{5.2}-\eqref{5.6} up to $T$ such that
		\begin{align}\label{5.7}
		\u \in \mathrm{H}^{2}(0,T;\mathbb{H}_{0}^{1}(\mathcal{O}) \cap \mathbb{H}^{2}(\mathcal{O})) \ \mbox{ and } \ k \in \mathrm{L}^{2}(0,T).
		\end{align}
		Then $\v:=\partial_{t}{\u}$ and $k$ verify the conditions
		\begin{align}\label{5.8}
		\v \in \mathrm{H}^{1}(0,T;\mathbb{H}_{0}^{1}(\mathcal{O}) \cap \mathbb{H}^{2}(\mathcal{O}))\ \mbox{ and }\  k \in \mathrm{L}^{2}(0,T),
		\end{align}
		and solve the system
		\begin{subequations}
			\begin{align}
			\nonumber \partial_{t}{\v}-\mu_{1} \partial_{t} \Delta \v-\mu_{0} \Delta \v- k\Delta \u_{0} &-\int_{0}^{t}k(t-s)\Delta \v(s)\d s+\big(\u_{\infty}\cdot \nabla\big)\v\\ +\nabla \partial_{t}p&=\mathbf{0}, \ \  \mbox{ in }  \mathcal{O} \times (0,T), \label{5.9}\\
			\nabla \cdot \v&=0,  \  \  \mbox{ in } \ \mathcal{O} \times (0,T),\label{5.10} \\
			\v&=\mathbf{0}, \ \  \mbox{ on } \ \partial \mathcal{O} \times [0,T),\label{5.11} \\
			\v&=\v_{0}, \   \mbox{ in } \ \mathcal{O} \times \{0\},\label{5.12}
			\end{align}
		\end{subequations}
		with 
		\begin{equation}\label{5.13}\tag{5.9e}
		\begin{aligned}
		& k(t)=\alpha \bigg\{r''(t)-\mu_{0} \int_{\mathcal{O}}\v \cdot \Delta \varphi \d x-\int_{\mathcal{O}}((\u_{\infty}\cdot \nabla)\varphi\big)\cdot\v\d x -\int_{0}^{t}\int_{\mathcal{O}}k(t-s) \v \cdot\Delta \varphi \d x \d s\bigg\}.
		\end{aligned}
		\end{equation}
		On the other hand, under the setting $\u(t):=\u_{0}+\int_{0}^{t}\v(s) \d s$, if $(\v,k)$ satisfies \eqref{5.8} and is a solution to the system \eqref{5.9}-\eqref{5.13}, then considering to the above setting, $(\u,k)$  satisfies \eqref{5.7} and is a solution to the system \eqref{5.2}-\eqref{5.6}. 
	\end{theorem}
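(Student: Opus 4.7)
The plan is to mirror the two-step argument of Theorem \ref{Equi-form}, exploiting the fact that the linearization through $\u_\infty$ eliminates the cross-nonlinearities that appeared in the Kelvin-Voigt case and therefore makes each step more transparent.

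\vskip 0.1cm
\noindent\textbf{Step 1 (forward direction).} Assume $(\u,k)$ solves \eqref{5.2}-\eqref{5.6} with the regularity \eqref{5.7}. Setting $\v:=\partial_t\u$, the regularity \eqref{5.8} together with \eqref{5.10}, \eqref{5.11} follows immediately by differentiating in time the divergence-free and boundary conditions on $\u$. For the initial condition \eqref{5.12}, I would solve \eqref{5.2} at $t=0$ for $\partial_t\u$ by inverting $(I-\mu_1\Delta)$ (the memory term vanishes at $t=0$) and then use assumption (H4) to identify $\v(0)=\v_0$. The equation \eqref{5.9} is obtained by applying $\partial_t$ to \eqref{5.2}; the key simplification compared to Theorem \ref{Equi-form} is that $\partial_t[(\u_\infty\cdot\nabla)\u]=(\u_\infty\cdot\nabla)\v$ since $\u_\infty$ is time-independent, and the convolution differentiates via Leibniz to produce exactly $k\Delta\u_0+\int_0^t k(t-s)\Delta\v(s)\d s$. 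Finally, to obtain \eqref{5.13}, I would take the inner product of \eqref{5.9} with $\varphi$ in $\mathbb{L}^2(\mathcal{O})$; assumption (H2) kills the pressure term by the divergence theorem, assumption (H3) lets me solve for $k(t)$ after identifying
\[
\int_{\mathcal{O}}(I-\mu_1\Delta)\varphi\cdot\partial_t\v\,\d x=r''(t),
\]
which comes from twice differentiating \eqref{5.6}.

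\vskip 0.1cm
\noindent\textbf{Step 2 (reverse direction).} Assume $(\v,k)$ solves \eqref{5.9}-\eqref{5.13} with regularity \eqref{5.8} and define $\u(t):=\u_0+\int_0^t\v(s)\d s$. The regularity \eqref{5.7} and the conditions \eqref{5.3}-\eqref{5.5} follow from the construction of $\u$ together with (H1). To recover \eqref{5.2}, I would rewrite \eqref{5.9} (using $\v=\partial_t\u$ and the identity $k\Delta\u_0+\int_0^t k(t-s)\Delta\v(s)\d s=\partial_t[\int_0^t k(t-s)\Delta\u(s)\d s]$) as the time derivative of \eqref{5.2}, integrate on $[0,t]$, and evaluate at $t=0$: assumption (H4) then forces the constant of integration to be zero. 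For \eqref{5.6}, I would recast \eqref{5.13} as $r''(t)=\partial_t\{\mu_0\int_{\mathcal{O}}\varphi\cdot\Delta\u\,\d x-\int_{\mathcal{O}}(\u_\infty\cdot\nabla)\u\cdot\varphi\,\d x+\int_0^t\int_{\mathcal{O}}k(t-s)\varphi\cdot\Delta\u\,\d x\,\d s\}$, integrate once in time, fix the constant of integration using the second compatibility condition in (H5), then substitute \eqref{5.2} into the resulting identity to conclude $r'(t)=\partial_t[\int_{\mathcal{O}}(I-\mu_1\Delta)\varphi\cdot\u\,\d x]$; a second integration combined with the first compatibility condition in (H5) produces \eqref{5.6}.

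\vskip 0.1cm
\noindent\textbf{Expected main obstacle.} There is no genuine analytic obstacle here: the result is essentially a bookkeeping exercise of the same type as Theorem \ref{Equi-form}, and in fact is strictly simpler because the linear convective term $(\u_\infty\cdot\nabla)\u$ differentiates cleanly and produces no Leibniz cross terms of the form $(\v\cdot\nabla)(\u_0+\int_0^t\v)$. The only place that requires some care is tracking the two constants of integration in Step 2 and verifying that both are eliminated by the compatibility conditions in (H4) and (H5); a small caveat is that in the reverse direction the pressure in \eqref{5.2} is determined only up to an additive function of $t$, but this ambiguity is irrelevant since it drops out under $\nabla$.
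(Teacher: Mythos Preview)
Your proposal is correct and follows precisely the approach the paper indicates: the paper's proof of Theorem \ref{Equi-form1} simply invokes the argument of Theorem \ref{Equi-form}, and your two-step plan reproduces that argument with the expected simplification that $(\u_\infty\cdot\nabla)\u$ differentiates to $(\u_\infty\cdot\nabla)\v$ without Leibniz cross terms. Your handling of the constants of integration via (H4) and (H5) matches the paper's treatment in Step 2 of Theorem \ref{Equi-form}.
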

	\begin{proof}
		The proof constitutes the similar arguments as given in the proof of Theorem \ref{Equi-form}.
	\end{proof}
	The following theorem gives similar results for the system \eqref{5.2}-\eqref{5.6}, as we have proved in Theorem \ref{thm1} for the system \eqref{2.2}-\eqref{2.6}.
	\begin{theorem}[Local in time existence]\label{Thm1}
		Let the assumptions $(H1)$-$(H5)$ hold. Then there exists $\tau \in (0,T]$, such that the inverse problem \ref{IP2} has a unique solution $$(\u,k) \in \mathrm{H}^{2}(0,\tau;\mathbb{H}_{0}^{1}(\mathcal{O}) \cap \mathbb{H}^{2}(\mathcal{O})) \times \mathrm{L}^{2}(0,\tau).$$
	\end{theorem}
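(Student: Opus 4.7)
The plan is to mimic the proof of Theorem \ref{thm1}, exploiting the simplification that the convective term $(\u_\infty \cdot \nabla)\u$ is now linear in the unknown. By Theorem \ref{Equi-form1}, it suffices to produce a unique pair $(\v,k) \in \mathrm{H}^{1}(0,\tau;\mathbb{H}_{0}^{1}(\mathcal{O}) \cap \mathbb{H}^{2}(\mathcal{O})) \times \mathrm{L}^{2}(0,\tau)$ solving \eqref{5.9}--\eqref{5.13}; the original velocity is then recovered by $\u(t):=\u_{0}+\int_{0}^{t}\v(s)\d s$.

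I would work on the closed metric subset
\[
\mathcal{V}(\tau,L):=\left\{(\tilde{\v},\tilde{k})\in \mathrm{H}^{1}(0,\tau;\mathbb{H}_{0}^{1}(\mathcal{O}) \cap \mathbb{H}^{2}(\mathcal{O}))\times\mathrm{L}^{2}(0,\tau): \tilde{\v}\text{ satisfies \eqref{5.10}--\eqref{5.12}},\ \|\tilde{\v}\|+\|\tilde{k}\|\le L\right\},
\]
and define a map $\Psi\colon\mathcal{V}(\tau,L)\to\mathcal{V}(\tau,L)$ sending $(\tilde{\v},\tilde{k})\mapsto(\v,k)$ via
\[
k(t):=\alpha\bigg\{r''(t)-\mu_{0}\int_{\mathcal{O}}\tilde{\v}\cdot\Delta\varphi\d x-\int_{\mathcal{O}}\big((\u_{\infty}\cdot\nabla)\varphi\big)\cdot\tilde{\v}\d x-\int_{0}^{t}\int_{\mathcal{O}}\tilde{k}(t-s)\tilde{\v}\cdot\Delta\varphi\d x\d s\bigg\},
\]
and $\v$ the unique solution of the linear Kelvin-Voigt-Oseen problem
\[
\partial_{t}\v-\mu_{1}\partial_{t}\Delta\v-\mu_{0}\Delta\v+(\u_{\infty}\cdot\nabla)\v+\nabla\partial_{t}p=k\Delta\u_{0}+\int_{0}^{t}k(t-s)\Delta\tilde{\v}(s)\d s,
\]
with $\nabla\cdot\v=0$, $\v|_{\partial\mathcal{O}}=\mathbf{0}$, $\v(0)=\v_{0}$. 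Since the operator $-\mu_{1}\partial_{t}\Delta-\mu_{0}\Delta+(\u_{\infty}\cdot\nabla)$ (together with the divergence constraint) is a standard linear Kelvin-Voigt type operator, the maximal regularity estimate
\[
\|\v\|_{\mathrm{H}^{1}(0,\tau;\mathbb{H}_{0}^{1}(\mathcal{O})\cap\mathbb{H}^{2}(\mathcal{O}))}\le C\bigl(\|\v_{0}\|_{\mathbb{H}_{0}^{1}(\mathcal{O})\cap\mathbb{H}^{2}(\mathcal{O})}+\|F\|_{\mathrm{L}^{2}(0,\tau;\mathbb{L}^{2}(\mathcal{O}))}\bigr)
\]
holds with $C$ independent of $\tau$, where $F$ denotes the right-hand side.

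For the self-mapping step, I would bound $\|k\|_{\mathrm{L}^{2}(0,\tau)}$ by Lemma \ref{Lemma1}, the Sobolev embedding $\mathbb{H}^{2}(\mathcal{O})\hookrightarrow\mathbb{L}^{\infty}(\mathcal{O})$, and Lemma \ref{Lemma2} applied to $\tilde{\v}-\v_{0}$, producing an estimate of the form
\[
\|k\|_{\mathrm{L}^{2}(0,\tau)}\le \alpha\|r''\|_{\mathrm{L}^{2}(0,\tau)}+C_{1}(\varphi,\u_{\infty})\,\tau^{1/2}(1+L)(1+L\tau^{1/2}).
\]
The forcing $F=k\Delta\u_{0}+k*\Delta\tilde{\v}$ is controlled as in \eqref{G1}--\eqref{G2}:
\[
\|F\|_{\mathrm{L}^{2}(0,\tau;\mathbb{L}^{2}(\mathcal{O}))}\le C\|k\|_{\mathrm{L}^{2}(0,\tau)}\bigl(\|\u_{0}\|_{\mathbb{H}^{2}(\mathcal{O})}+\tau^{1/2}L\bigr).
\]
Here the Oseen simplification is crucial: the nonlinear contributions $G_{3},G_{4}$ from the Kelvin-Voigt proof disappear, since $(\u_{\infty}\cdot\nabla)\v$ is absorbed into the left-hand side of the linear solvability estimate. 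Fix $L\ge L_{0}$ with $L_{0}$ the value produced by $\tau=1$ and then choose $\tau$ so that $\tau(1+L+L^{2})\le 1$ together with the smallness required to absorb the $(1+L)\tau^{1/2}$ factors; this gives $\Psi(\mathcal{V}(\tau,L))\subset\mathcal{V}(\tau,L)$.

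For the contraction step I would subtract the equations for $(\v_{1},k_{1})$ and $(\v_{2},k_{2})$ produced from inputs $(\tilde{\v}_{j},\tilde{k}_{j})$. The difference $k_{1}-k_{2}$ involves only three error terms (versus seven in \eqref{4.11}), namely the analogues of $B_{1}$, $B_{2}$, $B_{3}$ plus a linear Oseen contribution $\mu_{0}\int_{\mathcal{O}}((\u_{\infty}\cdot\nabla)\varphi)\cdot(\tilde{\v}_{1}-\tilde{\v}_{2})\d x$; each is estimated by H\"older's inequality, Lemma \ref{Lemma1}, and Lemma \ref{Lemma2}, yielding
\[
\|k_{1}-k_{2}\|_{\mathrm{L}^{2}(0,\tau)}\le C\tau^{1/2}(1+L)\bigl(\|\tilde{\v}_{1}-\tilde{\v}_{2}\|_{\mathrm{H}^{1}(0,\tau;\mathbb{H}_{0}^{1}\cap\mathbb{H}^{2})}+\|\tilde{k}_{1}-\tilde{k}_{2}\|_{\mathrm{L}^{2}(0,\tau)}\bigr).
\]
Likewise $\|\v_{1}-\v_{2}\|_{\mathrm{H}^{1}(0,\tau;\mathbb{H}_{0}^{1}\cap\mathbb{H}^{2})}$ is bounded by $\|(k_{1}-k_{2})\Delta\u_{0}+(k_{1}-k_{2})*\Delta\tilde{\v}_{1}+k_{2}*\Delta(\tilde{\v}_{1}-\tilde{\v}_{2})\|_{\mathrm{L}^{2}(0,\tau;\mathbb{L}^{2}(\mathcal{O}))}$, which is estimated as in \eqref{4.14} but without the nonlinear $E_{4},\ldots,E_{7}$ terms. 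Combining the two displays and choosing $\tau$ smaller still so that the resulting prefactor $C\tau^{1/2}(1+L)(1+\|\u_{0}\|_{\mathbb{H}^{2}}+\tau^{1/2}L)$ lies in $(0,1)$ makes $\Psi$ a strict contraction; Banach's fixed point theorem then produces the required unique $(\v,k)$, and Theorem \ref{Equi-form1} converts this back into a unique solution of \eqref{5.2}--\eqref{5.6}.

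I expect no serious obstacle: the absence of the purely nonlinear pieces $(\tilde{\v}\cdot\nabla)\tilde{\v}$ and $(\tilde{\v}\cdot\nabla)\int_{0}^{t}\tilde{\v}\d s$ removes all terms with super-linear growth in $L$, so the smallness condition on $\tau$ is immediately attainable. The mildly technical point is verifying the maximal $\mathrm{H}^{1}$-in-time regularity estimate for the linear Oseen-Kelvin-Voigt equation uniformly in $\tau$, which follows by applying $\partial_{t}$ to the elliptic-resolvent equation $(I-\mu_{1}\Delta)\partial_{t}\v=\mu_{0}\Delta\v-(\u_{\infty}\cdot\nabla)\v-\nabla\partial_{t}p+F$ and using elliptic regularity together with the hypothesis $\u_{\infty}\in\mathbb{H}_{0}^{1}\cap\mathbb{H}^{2}$.
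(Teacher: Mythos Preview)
Your proposal is correct and follows the same overall strategy as the paper: pass to the equivalent system \eqref{5.9}--\eqref{5.13} via Theorem~\ref{Equi-form1} and run a contraction argument on the same closed set, with the same definition of $k$ in terms of $(\tilde{\v},\tilde{k})$.

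There is one genuine, if minor, difference worth flagging. In the paper's fixed-point map the Oseen convection is kept on the right as $(\u_\infty\cdot\nabla)\tilde{\v}$, i.e.\ evaluated at the \emph{input} $\tilde{\v}$ and treated as part of the forcing; the linear solvability estimate invoked is then exactly the one already used in Theorem~\ref{thm1} (no convection in the operator), and $(\u_\infty\cdot\nabla)\tilde{\v}$ appears as one more linear term to bound alongside $k\Delta\u_0$ and $k*\Delta\tilde{\v}$. You instead place $(\u_\infty\cdot\nabla)\v$ on the left, evaluated at the \emph{output} $\v$, and absorb it into the linear operator. Your route gives a shorter list of forcing terms to estimate, at the cost of needing the maximal-regularity bound for the full Oseen--Kelvin--Voigt linear problem uniformly in $\tau$---which, as you note, follows from elliptic regularity for $(I-\mu_1\Delta)$ together with $\u_\infty\in\mathbb{H}^1_0\cap\mathbb{H}^2$. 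The paper's route avoids that extra verification by reusing the estimate from Section~\ref{sec4}; either choice closes the argument.
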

	\begin{proof}
		Let us define the space
		\begin{align*}
		\mathbf{V}(\tau,M):=&\bigg\{(\tilde{\v},\tilde{k}) \in  \mathrm{H}^{1}(0,\tau;\mathbb{H}_{0}^{1}(\mathcal{O}) \cap \mathbb{H}^{2}(\mathcal{O})) \times \mathrm{L}^{2}(0,\tau):
		\nabla \cdot \tilde{\v}=0, \ \mbox{ in } \ \mathcal{O} \times (0,\tau),\\
		&\quad\tilde{\v}=\mathbf{0}, \  \mbox{ on } \ \partial \mathcal{O} \times (0,\tau),\  
		\tilde{\v}=\v_{0}, \    \mbox{ in } \ \mathcal{O} \times \{0\}  \ \mbox{ and } \ \nonumber\\&\quad \|\tilde{\v}\|_{\mathrm{H}^{1}\left(0,\tau;\mathbb{H}_{0}^{1}(\mathcal{O}) \cap \mathbb{H}^{2}(\mathcal{O})\right)} +\|\tilde{k}\|_{\mathrm{L}^{2}(0,\tau)} \leq M  \bigg\},
		\end{align*}
		where $M$ is a positive constant, which will be determined later. We also define the mapping $ \Gamma \colon \mathbf{V}(\tau,M) \to \mathbf{V}(\tau,M)$ such that
		$(\tilde{\v},\tilde{k}) \mapsto (\v,k)$ 
		through \\
		\begin{align}\label{6a}
		&k(t):=\alpha \bigg\{r''-\mu_{0} \int_{\mathcal{O}}\tilde{\v} \cdot \Delta \varphi \d x -\int_{0}^{t}\int_{\mathcal{O}}\tilde{k}(t-s) \tilde{\v} \cdot\Delta \varphi \d x \d s-\int_{\mathcal{O}}\big((\u_{\infty}\cdot \nabla)\varphi\big) \cdot \tilde{\v}\d x \bigg\},
		\end{align}
		and the initial boundary value problem
		\begin{equation}\label{6b}
		\left\{
		\begin{aligned}
		\partial_{t}{\v}-\mu_{1} \partial_{t} \Delta \v-\mu_{0} \Delta \v- k\Delta \u_{0}+(\u_{\infty}\cdot \nabla) \cdot \tilde{\v}&-\int_{0}^{t}k(t-s)\Delta \tilde{\v}(s)\d s\\+\nabla \partial_{t}p&=\mathbf{0}, \ \  \mbox{ in } \ \mathcal{O} \times (0,\tau), \\
		\nabla \cdot \v&=0, \ \   \mbox{ in }  \ \mathcal{O} \times (0,\tau), \\
		\v&=\mathbf{0}, \ \  \mbox{ on }  \ \partial \mathcal{O} \times [0,\tau), \\
		\v&=\v_{0}, \  \mbox{ in } \  \mathcal{O} \times \{0\}.
		\end{aligned}
		\right.
		\end{equation}

		In order to complete the proof of Theorem \ref{Thm1}, we need to show that the mapping  $ \Gamma \colon \mathbf{V}(\tau,M) \to \mathbf{V}(\tau,M)$ is a contraction map. The proof constitutes the similar arguments as given in the proof of Theorem \ref{thm1}.
	\end{proof}
	Next, we prove the global uniqueness result for our inverse problem \ref{IP2}. Let $0< \tau <\tau' \leq (2\tau) \wedge T$ and let $(\hat{\v},\hat{k})$ be the solution of the system \eqref{5.9}-\eqref{5.13} in $[0,\tau]$  from Theorem \ref{Thm1} for small $\tau >0$. We will prove that  $(\hat{\v},\hat{k})$ is extensible to a solution  $(\v,k) \in \mathrm{H}^{1}(0,\tau';\mathbb{H}_{0}^{1}(\mathcal{O}) \cap \mathbb{H}^{2}(\mathcal{O})) \times \mathrm{L}^{2}(0,\tau')$ in $[0,\tau']$. We start with the following lemma which is very useful in the proof of Theorem \ref{Thm2}.
	\begin{lemma}\label{Lemma7a}
		Let the assumptions $(H1)$-$(H5)$ hold. Let $0< \tau <\tau' \leq (2\tau) \wedge T$ and let $(\u,k)$ be a solution of the system \eqref{5.2}-\eqref{5.6} in $[0,\tau']$, with the regularity 
		\begin{align*}
		\u \in \mathrm{H}^{2}(0,\tau';\mathbb{H}_{0}^{1}(\mathcal{O}) \cap \mathbb{H}^{2}(\mathcal{O})), \ \  k \in \mathrm{L}^{2}(0,\tau').
		\end{align*}
		We set
		\begin{itemize}
			\item [(F1)] $\v:=\partial_{t} \u,$
			\item [(F2)] $ \hat{\v}:= \v_{|[0,\tau]},$
			\item [(F3)] $ \v_{\tau}(t):=\v(\tau+t), \ \ \  t \in [0,\tau'-\tau],$
			\item [(F4)] $	 \hat{k}:= k_{|[0,\tau]},$
			\item [(F5)] $	  k_{\tau}(t):=k(\tau+t), \ \ \ t \in[0,\tau'-\tau],$ 
			\item [(F6)] $ h(t):=-\int_{t}^{\tau}\hat{k}(\tau+ t- s)\Delta \hat{\v}(s) \d s+ \nabla\partial_{t} p(\tau+t), \ \ \ t \in[0,\tau'-\tau],$
			\item [(F7)] $	\u_{\tau}:=\v(\tau),$
			\item [(F8)] $ r_{\tau}(t):=r(\tau+t), \ \ \  t \in[0,\tau'-\tau].$
		\end{itemize}
		Then we have the following obvious conditions:
		\begin{itemize}
			\item [(I)] $\v_{\tau} \in \mathrm{H}^{1}(0,\tau'-\tau;\mathbb{H}_{0}^{1}(\mathcal{O}) \cap \mathbb{H}^{2}(\mathcal{O}))$,
			\item[(II)] $k_{\tau} \in \mathrm{L}^{2}(0,\tau'-\tau)$,
			\item[(III)] $\u_{\tau} \in \mathbb{H}_{0}^{1}(\mathcal{O}) \cap \mathbb{H}^{2}(\mathcal{O})$,
			\item[(IV)] $r_{\tau} \in \mathrm{H}^{2}(0,\tau'-\tau)$,
			\item [(V)] $h \in \mathrm{L}^{2}(0,\tau'-\tau; \mathbb{L}^{2}(\mathcal{O}))$,
			\item [(VI)] $(\v_{\tau},k_{\tau})$ solves the system:
			\begin{equation}\label{7a}
			\left\{
			\begin{aligned}
			\partial_{t}{\v_{\tau}}-\mu_{1} \partial_{t} \Delta \v_{\tau}-\mu_{0} \Delta \v_{\tau}- k_{\tau}\Delta \u_{0} &-\int_{0}^{t}k_{\tau}(t-s)\Delta \hat{\v}(s)\d s\\-\int_{0}^{t}\hat{k}(t-s)\Delta \v_{\tau}(s)\d s+\big(\u_{\infty}\cdot \nabla\big)\v_{\tau}+h(t)&=\mathbf{0}, \ \ \mbox{ in } \ \mathcal{O} \times [0,\tau'-\tau], \\
			\nabla \cdot \v_{\tau}&=0, \ \  \mbox{ in } \ \mathcal{O} \times [0,\tau'-\tau], \\
			\v_{\tau}&=\mathbf{0}, \ \  \mbox{ on }\  \partial \mathcal{O} \times [0,\tau'-\tau], \\
			\v_{\tau}&=\u_{\tau}, \  \mbox{ in } \ \mathcal{O} \times \{0\},
			\end{aligned}
			\right.
			\end{equation}
			and
			\begin{equation}\label{7b}
			\left\{
			\begin{aligned}
			k_{\tau}(t)&=\alpha\bigg[r_{\tau}''(t)-\bigg\{\mu_{0}\int_{\mathcal{O}}  \v_{\tau} \cdot \Delta \varphi \d x +\int_{0}^{t}\int_{\mathcal{O}}k_{\tau}(t-s) \hat{\v}\cdot \Delta \varphi \d x\d s\\&\quad+\int_{0}^{t}\int_{\mathcal{O}}\hat{k}(t-s) \v_{\tau}\cdot \Delta \varphi \d x\d s +\int_{\mathcal{O}}\big((\u_{\infty}\cdot \nabla)\varphi\big)\cdot \v_\tau\d x\\&\quad+\int_{t}^{\tau}\int_{\mathcal{O}} \hat{k}(\tau+t-s)\hat{\v} \cdot \Delta\varphi\d x \d s\bigg\}\bigg], \ \ \ \ t\in[0,\tau'-\tau].
			\end{aligned}
			\right.
			\end{equation}
		\end{itemize}
		\begin{itemize}
			\item  [(VII)]
			On the other hand, if
			$(\hat{\v},\hat{k}) \in \mathrm{H}^{1}(0,\tau;\mathbb{H}_{0}^{1}(\mathcal{O}) \cap \mathbb{H}^{2}(\mathcal{O})) \times \mathrm{L}^{2}(0,\tau)$, solves the system \eqref{5.9}-\eqref{5.13} and $(\v_{\tau},k_{\tau}) \in \mathrm{H}^{1}(0,\tau'-\tau;\mathbb{H}_{0}^{1}(\mathcal{O}) \cap \mathbb{H}^{2}(\mathcal{O})) \times \mathrm{L}^{2}(0,\tau'-\tau)$ solves the system \eqref{7a}-\eqref{7b}. Set for $t \in [0,\tau']$,
			\begin{equation}\label{7c}
			(\v(t),k(t))=
			\left\{
			\begin{aligned}
			&(\hat{\v}(t),\hat{k}(t)),  &\text{ if } \ t \in [0,\tau],\\
			&(\v_{\tau}(t-\tau),k_{\tau}(t-\tau)), &\text{ if } \ t \in [\tau,\tau'].
			\end{aligned}
			\right.
			\end{equation}
			Then, $(\v,k) \in \mathrm{H}^{1}(0,\tau';\mathbb{H}_{0}^{1}(\mathcal{O}) \cap \mathbb{H}^{2}(\mathcal{O})) \times \mathrm{L}^{2}(0,\tau')$ and solves the system \eqref{5.9}-\eqref{5.13} in $[0,\tau']$.
		\end{itemize}
	\end{lemma}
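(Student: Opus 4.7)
The plan is to dispatch items (I)--(V) as immediate consequences of the definitions and devote the real work to items (VI) and (VII), both of which rest on a single three-piece decomposition of the memory integral under the time shift $t\mapsto \tau+t$. For (I)--(V): $\hat{\v},\hat{k},r_\tau,k_\tau,\v_\tau$ inherit their stated Sobolev regularity by restriction or time-shift from $\v,k,r$; the embedding $\mathrm{H}^{1}(0,\tau';\mathbb{H}_{0}^{1}(\mathcal{O})\cap\mathbb{H}^{2}(\mathcal{O})) \hookrightarrow C([0,\tau'];\mathbb{H}_{0}^{1}(\mathcal{O})\cap\mathbb{H}^{2}(\mathcal{O}))$ places $\u_\tau=\v(\tau)$ in $\mathbb{H}_{0}^{1}(\mathcal{O})\cap\mathbb{H}^{2}(\mathcal{O})$; and $h\in \mathrm{L}^{2}(0,\tau'-\tau;\mathbb{L}^{2}(\mathcal{O}))$ follows by Lemma \ref{Lemma1} applied to the convolution-type integral in (F6), together with $\nabla\partial_t p(\tau+\cdot)\in \mathrm{L}^{2}$ (inherited from \eqref{5.9}).

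For item (VI), I evaluate \eqref{5.9} and \eqref{5.13} at time $\tau+t$, with $t\in[0,\tau'-\tau]$, and split the memory integrals at $s=t$ and $s=\tau$ (both admissible since $t\leq \tau'-\tau\leq \tau$):
\begin{align*}
\int_0^{\tau+t} k(\tau+t-s)\Delta \v(s)\,\d s &=\int_0^{t} k(\tau+t-s)\Delta \v(s)\,\d s+\int_t^{\tau} k(\tau+t-s)\Delta \v(s)\,\d s\\
&\quad+\int_\tau^{\tau+t} k(\tau+t-s)\Delta \v(s)\,\d s.
\end{align*}
On $[0,t]$ the argument $\tau+t-s\in[\tau,\tau+t]$ gives $k(\tau+t-s)=k_\tau(t-s)$ and $\v=\hat{\v}$, producing $\int_0^{t}k_\tau(t-s)\Delta\hat{\v}(s)\,\d s$; on $[t,\tau]$ it belongs to $[t,\tau]$, so $k=\hat{k}$ and $\v=\hat{\v}$, producing the leftover piece $\int_t^{\tau}\hat{k}(\tau+t-s)\Delta\hat{\v}(s)\,\d s$ that is absorbed into $h(t)$; on $[\tau,\tau+t]$ the substitution $s=\tau+s'$ produces $\int_0^{t}\hat{k}(t-s')\Delta\v_\tau(s')\,\d s'$. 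Substituting back and invoking (F6) transforms the shifted \eqref{5.9} into \eqref{7a}. The identical splitting applied to \eqref{5.13} yields \eqref{7b}, and the boundary, divergence and initial conditions in \eqref{7a} are immediate from (F2)--(F7).

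For item (VII), the glued pair $(\v,k)$ defined by \eqref{7c} lies in $\mathrm{L}^{2}(0,\tau')$ automatically for $k$, and in $\mathrm{H}^{1}(0,\tau';\mathbb{H}_{0}^{1}(\mathcal{O})\cap\mathbb{H}^{2}(\mathcal{O}))$ for $\v$ because the initial condition $\v_\tau(0)=\u_\tau=\hat{\v}(\tau)$ in \eqref{7a} (a trace identity justified by the time-continuous embedding) ensures continuity of $\v$ across $t=\tau$ and hence the joint $\mathrm{H}^{1}$ regularity. To verify that $(\v,k)$ satisfies \eqref{5.9}--\eqref{5.13} on $[0,\tau']$, I simply reverse the three-piece decomposition: on $[0,\tau]$ the equations hold by hypothesis, and for $t\in[\tau,\tau']$ the three contributions reassemble from \eqref{7a}--\eqref{7b} into the single memory integral of $(\v,k)$, with the leftover piece on $[t,\tau]$ supplied by the $h$ term via (F6).

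The main obstacle, although not conceptually deep, is the careful bookkeeping of the three-piece decomposition, particularly identifying the leftover integral on $[t,\tau]$ as exactly the content of $h(t)$ and recognizing $\nabla\partial_t p(\tau+\cdot)$ as the pressure contribution absorbed into the same term. A secondary technicality is verifying the matching at the junction $t=\tau$, so that the glued $\v$ is genuinely in $\mathrm{H}^{1}$ rather than merely piecewise-$\mathrm{H}^{1}$, and that no spurious jumps appear in the distributional derivative.
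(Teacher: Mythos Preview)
Your proof is correct and follows essentially the same route as the paper: the heart of the argument in both cases is the three-piece splitting of the shifted memory integral $\int_0^{\tau+t} k(\tau+t-s)\Delta \v(s)\,\d s$ at $s=t$ and $s=\tau$, which yields exactly the two convolution terms in \eqref{7a} plus the leftover piece absorbed into $h$. Your treatment is in fact more explicit than the paper's, which dispatches items (I)--(V) and (VII) as ``easily verified'' and only writes out the decomposition for (VI); your additional care about the trace matching $\v_\tau(0)=\hat{\v}(\tau)$ needed for the glued $\v$ to be genuinely $\mathrm{H}^1$ on $[0,\tau']$ is a point the paper leaves implicit.
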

	\begin{proof}
		The assertions		(I)-(VI) and (VIII) can be easily verified.
		Let us now prove (VII). First of all, from the system \eqref{5.9}-\eqref{5.13}, we have 
		\begin{equation}\label{7d}
		\left\{
		\begin{aligned}
		\partial_{t}{\v}(\tau+t)-\mu_{1} \partial_{t} \Delta \v(\tau+t)-\mu_{0} \Delta \v(\tau+t)&- k(\tau+t)\Delta \u_{0} -\int_{0}^{\tau+t}k(\tau+t-s)\Delta \v(s)\d s\\+\big(\u_\infty\cdot \nabla\big)\v(\tau+t)+\nabla \partial_{t}p(\tau+t)&=\mathbf{0}, \ \ \mbox{ in }\ \mathcal{O} \times [0,\tau'-\tau], \\
		\nabla \cdot \v(\tau+t)&=0, \ \  \mbox{ in }\ \mathcal{O} \times [0,\tau'-\tau],\\
		\v(\tau+t)&=\mathbf{0}, \ \  \mbox{ on }\ \partial \mathcal{O} \times [0,\tau'-\tau], \\
		\v(\tau)&=\u_{\tau}, \   \mbox{ in }\ \mathcal{O} \times \{0\},
		\end{aligned}
		\right.
		\end{equation}
		and
		\begin{equation}\label{7e}
		\left\{
		\begin{aligned}
		k(\tau+t)&=\alpha \bigg[r''(\tau+t)-\bigg\{\mu_{0} \int_{\mathcal{O}}\v(\tau+t) \cdot \Delta \varphi \d x+\int_{\mathcal{O}}(\u_\infty\cdot \nabla\big)\v(\tau+t)\d x  \\&\quad+\int_{0}^{\tau+t}\int_{\mathcal{O}}k(\tau+t-s) \v(\tau+t) \cdot\Delta \varphi \d x \d s, \ \ \text{ for all } \ t\in[0,\tau'-\tau].
		\end{aligned}
		\right.
		\end{equation}
		Next, as $\tau'-\tau \leq \tau$, if $t \in[0,\tau'-\tau]$, we have
		\begin{align}\label{7g}
		&\int_{0}^{\tau+t}k(\tau+t-s)\Delta \v(s)\d s\nonumber
		\\&=\int_{0}^{t}k(\tau+t-s)\Delta \v(s)\d s +\int_{t}^{\tau}k(\tau+t-s)\Delta \v(s)\d s+\int_{\tau}^{\tau+t}k(\tau+t-s)\Delta \v(s)\d s \nonumber\\
		&=\int_{0}^{t}k_{\tau}(t-s)\Delta \hat{\v}(s)\d s +\int_{0}^{t}\hat{k}(t-s)\Delta \v_{\tau}(s)\d s +\int_{t}^{\tau}\hat{k}(\tau+t-s)\Delta \hat{\v}(s)\d s. 
		\end{align}
		Substituting \eqref{7g} in \eqref{7d} and \eqref{7e}, we immediately get \eqref{7a} and \eqref{7b}.
	\end{proof}
	\begin{theorem}[Global in time uniqueness]\label{Thm2}
		Let the assumptions $(H1)$-$(H5)$ hold. Then, if $\tau \in (0,T]$, and the inverse problem  \ref{IP2} has two solutions $$(\u_{j},k_{j}) \in \mathrm{H}^{2}(0,\tau;\mathbb{H}_{0}^{1}(\mathcal{O}) \cap \mathbb{H}^{2}(\mathcal{O})) \times \mathrm{L}^{2}(0,\tau), \ j \in \{1,2\},$$  then $(\u_{1},k_{1})=(\u_{2},k_{2})$.
	\end{theorem}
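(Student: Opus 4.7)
The plan is to bootstrap the local uniqueness supplied by Theorem \ref{Thm1} to global uniqueness via a continuation argument powered by Lemma \ref{Lemma7a}. Set $\v_j:=\partial_t\u_j$ and $k_j$ ($j=1,2$); by the equivalence in Theorem \ref{Equi-form1} it suffices to show $(\v_1,k_1)=(\v_2,k_2)$ in $\mathrm{H}^1(0,\tau;\mathbb{H}_0^1(\mathcal{O})\cap\mathbb{H}^2(\mathcal{O}))\times\mathrm{L}^2(0,\tau)$. Define
\begin{equation*}
\tau^*:=\sup\bigl\{t\in[0,\tau]\colon (\v_1,k_1)=(\v_2,k_2)\ \text{on}\ [0,t]\bigr\}.
\end{equation*}
Theorem \ref{Thm1}, applied to the initial data $(\u_0,p_0,r)$, gives $\tau^*>0$. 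The goal is to show $\tau^*=\tau$; assume for contradiction that $\tau^*<\tau$. By continuity of the trace at $t=\tau^*$, the two solutions coincide on the closed interval $[0,\tau^*]$, and their restrictions produce the same data $\hat{\v}=\hat{\v}^{(1)}=\hat{\v}^{(2)}$, $\hat{k}$, $h$, $\u_{\tau^*}$, $r_{\tau^*}$ introduced in Lemma \ref{Lemma7a} (with $\tau$ replaced by $\tau^*$).

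Next I would apply item (VI) of Lemma \ref{Lemma7a}: the translated tails $(\v_{\tau^*}^{(j)},k_{\tau^*}^{(j)})$ each solve the same system \eqref{7a}--\eqref{7b} on $[0,\tau'-\tau^*]$ for any $\tau'\in(\tau^*,\tau]$, driven by \emph{identical} coefficients and forcing coming from the common history. Subtracting, I would write the difference system for $\w:=\v_{\tau^*}^{(1)}-\v_{\tau^*}^{(2)}$ and $\ell:=k_{\tau^*}^{(1)}-k_{\tau^*}^{(2)}$, which reads
\begin{equation*}
\partial_t\w-\mu_1\partial_t\Delta\w-\mu_0\Delta\w-\ell\,\Delta\u_0-\int_0^t\ell(t-s)\Delta\hat{\v}(s)\d s-\int_0^t\hat{k}(t-s)\Delta\w(s)\d s+(\u_\infty\cdot\nabla)\w=\mathbf{0},
\end{equation*}
with $\nabla\cdot\w=0$, $\w|_{\partial\mathcal{O}}=\mathbf{0}$, $\w(0)=\mathbf{0}$, together with
\begin{equation*}
\ell(t)=-\alpha\Bigl\{\mu_0\!\int_{\mathcal{O}}\!\w\cdot\Delta\varphi\d x+\int_0^t\!\!\int_{\mathcal{O}}\ell(t-s)\hat{\v}\cdot\Delta\varphi\d x\d s+\int_0^t\!\!\int_{\mathcal{O}}\hat{k}(t-s)\w\cdot\Delta\varphi\d x\d s+\int_{\mathcal{O}}((\u_\infty\cdot\nabla)\varphi)\cdot\w\d x\Bigr\}.
\end{equation*}
Note that the terms that were \emph{nonlinear} in $\v$ in the Kelvin-Voigt setting (the convective coupling to $\u_0+\int_0^t\v\d s$) are absent here because the Oseen drift $(\u_\infty\cdot\nabla)\w$ is linear, so all contributions in the difference system are linear in $(\w,\ell)$.

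Now I would perform the same kind of estimates developed inside the proof of Theorem \ref{Thm1} (Step 2), but restricted to the difference system above. Using Lemmas \ref{Lemma1} and \ref{Lemma2}, the $\mathrm{L}^2$-$\mathrm{L}^\infty$ controls, and the a priori bound $\|\hat{\v}\|_{\mathrm{H}^1(0,\tau;\mathbb{H}^2)}+\|\hat{k}\|_{\mathrm{L}^2(0,\tau)}\le K<\infty$ (which is finite because both $(\v_j,k_j)$ lie in the required spaces on $[0,\tau]$), one obtains, on any sub-interval $[0,\delta]\subset[0,\tau-\tau^*]$, an estimate of the form
\begin{equation*}
\|\w\|_{\mathrm{H}^1(0,\delta;\mathbb{H}_0^1\cap\mathbb{H}^2)}+\|\ell\|_{\mathrm{L}^2(0,\delta)}\le C(K,\u_0,\varphi,\u_\infty)\,\delta^{1/2}\bigl(\|\w\|_{\mathrm{H}^1(0,\delta;\mathbb{H}_0^1\cap\mathbb{H}^2)}+\|\ell\|_{\mathrm{L}^2(0,\delta)}\bigr).
\end{equation*}
Here the factor $\delta^{1/2}$ arises precisely as in the contraction estimate (4.12)--(4.14) of Theorem \ref{thm1} and the parallel Oseen estimate, because every dangerous term carries a prefactor from Lemma \ref{Lemma2} or Lemma \ref{Lemma1}. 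Choosing $\delta>0$ small enough that $C\delta^{1/2}<1$ forces $\w\equiv\mathbf{0}$ and $\ell\equiv 0$ on $[0,\delta]$, whence $(\v_1,k_1)=(\v_2,k_2)$ on $[0,\tau^*+\delta]$, contradicting the maximality of $\tau^*$. Therefore $\tau^*=\tau$ and the two solutions coincide.

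The main obstacle is the first step: verifying that Lemma \ref{Lemma7a} lets the two tails be compared within the \emph{same} system, i.e.\ that the memory history $h$ and the kernel trace $\hat{k}$ in \eqref{7a}--\eqref{7b} are genuinely common to both extensions; this is where agreement on the closed interval $[0,\tau^*]$ is essential. Once this splitting is secured, the remainder is a linearized reprise of the contraction estimates already carried out for the local existence, and the linearity of the Oseen convection $(\u_\infty\cdot\nabla)\v$ is precisely what permits a uniform $\delta$ (depending only on the global norms of the fixed solutions) and hence the continuation to the full interval $[0,\tau]$.
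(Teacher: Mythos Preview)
Your overall architecture---define the maximal coincidence time $\tau^*$, translate via Lemma~\ref{Lemma7a}, and show the difference $(\w,\ell)$ vanishes on a short interval past $\tau^*$---is exactly the paper's strategy. The execution of the small-interval step differs, however. You invoke the contraction machinery of Theorem~\ref{thm1} (Step~2) together with the linear energy bound \eqref{4.7} and the fact that $\w(0)=\mathbf{0}$ to produce a single inequality with a $\delta^{1/2}$ prefactor. The paper instead re-derives the estimate from scratch: it first isolates $\|\ell\|_{\mathrm{L}^2(0,\delta)}\le C(\delta)\|\w\|_{\mathrm{L}^2(0,\delta;\mathbb{L}^2)}$ (your bound on $\ell$), and then runs four successive energy identities---testing the difference equation against $\w$, $\partial_t\w$, $-\Delta\w$, $-\Delta\partial_t\w$---each followed by Gronwall, to build up control of $\|\w\|_{\mathrm{H}^1(0,\delta;\mathbb{H}^1_0\cap\mathbb{H}^2)}$. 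Your route is shorter and reuses existing lemmas; the paper's is more self-contained and makes the role of the pressure explicit.

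Two small points to tighten. First, your difference PDE for $\w$ omits the pressure gradient $\nabla\partial_t(p_1-p_2)$; the paper keeps it in \eqref{7j} and either eliminates it by testing against divergence-free functions or controls it via the elliptic equation for $\partial_t(p_1-p_2)$. Second, your claim that ``$h$ is common'' is not literally true: by (F6) the function $h$ contains $\nabla\partial_t p(\tau^*+t)$, which lives \emph{past} $\tau^*$ and can differ between the two solutions. The resolution is that $h_1-h_2$ reduces precisely to that pressure-gradient difference, which then reappears as the missing term above; once you restore it, the linear Stokes-type estimate you appeal to handles it automatically. With these adjustments your argument is complete and equivalent to the paper's.
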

	\begin{proof}
		Let $(\u_{j},k_{j}) \in \mathrm{H}^{2}(0,\tau;\mathbb{H}_{0}^{1}(\mathcal{O}) \cap \mathbb{H}^{2}(\mathcal{O})) \times \mathrm{L}^{2}(0,\tau), \ j \in \{1,2\}$, be solutions of the system \eqref{5.2}-\eqref{5.6}, for some $\tau \in (0,T].$	We claim that $\u_{1}=\u_{2}$ and $k_{1}=k_{2}$. We set $\v_{j}:=\partial_{t}\u_{j}, \ j \in \{1,2\}$. Then, from Theorem \ref{Equi-form1}, we have $(\v_{j},k_{j}) \in \mathrm{H}^{1}(0,\tau;\mathbb{H}_{0}^{1}(\mathcal{O}) \cap \mathbb{H}^{2}(\mathcal{O})) \times \mathrm{L}^{2}(0,\tau)$ and solves the system \eqref{5.9}-\eqref{5.13}. It suffices to show that  $\v_{1}=\v_{2}$ and $k_{1}=k_{2}$. Set $j \in\{1,2\}$, $\u_{j}(t):=\u_{0}+\int_{0}^{t} \v_{j}(s) \d s, $ (as in Theorem \ref{Equi-form1}).
		We need to prove
		\begin{align}\label{7h}
		\|\v_{1}-\v_{2}\|_{\mathrm{H}^{1}(0,\tau;\mathbb{H}_{0}^{1}(\mathcal{O}) \cap \mathbb{H}^{2}(\mathcal{O}))}+\|k_{1}-k_{2}\|_{ \mathrm{L}^{2}(0,\tau)}=0.
		\end{align}
		We have proved in Theorem \ref{Thm1} that for every $M \in \mathbb{R}^{+}$, there exists $\tau(M) \in (0,T]$, such that $\mbox{for all} \  \tau \in (0,\tau(M)],$ system \eqref{5.9}-\eqref{5.13} has a unique solution $$(\v,k) \in \mathrm{H}^{1}(0,\tau(M);\mathbb{H}_{0}^{1}(\mathcal{O}) \cap \mathbb{H}^{2}(\mathcal{O})) \times \mathrm{L}^{2}(0,\tau(M)),$$ such that
		$$\|\tilde{\v}\|_{\mathrm{H}^{1}(0,\tau(M);\mathbb{H}_{0}^{1}(\mathcal{O}) \cap \mathbb{H}^{2}(\mathcal{O}))} +\|\tilde{k}\|_{\mathrm{L}^{2}(0,\tau(M))} \leq M.$$
		This implies that, if $(\v_{j},k_{j}) \in \mathrm{H}^{1}(0,\tau;\mathbb{H}_{0}^{1}(\mathcal{O}) \cap \mathbb{H}^{2}(\mathcal{O})) \times \mathrm{L}^{2}(0,\tau), \  j \in \{1,2\} $, be solutions of the system \eqref{5.9}-\eqref{5.13}, then there exists $\tau_{1} \in (0,\tau]$, such that $(\v_{1},k_{1})$ and $(\v_{2},k_{2})$ coincide on $[0,\tau_{1}]$.
		In fact, we can set $$M_{1}:=\max\left\{\|\tilde{\v}_{j}\|_{\mathrm{H}^{1}(0,\tau;\mathbb{H}_{0}^{1}(\mathcal{O}) \cap \mathbb{H}^{2}(\mathcal{O}))} +\|\tilde{k}_{j}\|_{\mathrm{L}^{2}(0,\tau)}: j \in \{1,2\} \right\}$$ and obtain that $(\v_{1},k_{1})$ and $(\v_{2},k_{2})$ coincide in $[0,\tau(M_{1}) \wedge \tau]$.
		Let us define
		\begin{align}\label{7i}
		\tau_{1}:=\inf \left\{t \in (0,\tau]:\|\v_{1}-\v_{2}\|_{\mathrm{H}^{1}(0,t;\mathbb{H}_{0}^{1}(\mathcal{O}) \cap \mathbb{H}^{2}(\mathcal{O}))}+\|k_{1}-k_{2}\|_{ \mathrm{L}^{2}(0,t)}>0 \right\}.
		\end{align}
		If \eqref{7h} is not true, then it is obvious that $\tau_{1}$ is well defined and we have $\tau_{1} \in (0, \tau)$. Set, for $t \in [0,\tau - \tau_{1}]$, $j \in \{1,2\}$,
		\begin{align*}
		\v_{j}^{*}(t)=\v_{j}(\tau_{1}+t), \ \ \ \ k_{j}^{*}(t)=k_{j}(\tau_{1}+t).
		\end{align*}
		Keeping in mind that  $\v_{1}=\v_{2}$ and $k_{1}=k_{2}$ almost everywhere if $t \in [0,\tau_{1}]$.
		From conditions (I)-(VIII) in Lemma \ref{Lemma7a}, we obtain
		\begin{equation}\label{7j}
		\left\{
		\begin{aligned}
		\partial_{t}{(\v_{1}^{*}-\v_{2}^{*})}-\mu_{1} \partial_{t} \Delta (\v_{1}^{*}-\v_{2}^{*})&-\mu_{0} \Delta (\v_{1}^{*}-\v_{2}^{*})- (k_{1}^{*}-k_{2}^{*})\Delta \u_{0}\\-\int_{0}^{t}(k_{1}^{*}-k_{2}^{*})(t-s) \Delta \v_{1}(s)\d s&-\int_{0}^{t}k_{1}(t-s) \Delta (\v_{1}^{*}-\v_{2}^{*})(s)\d s\\+(\u_{\infty}\cdot \nabla)(\v_{1}^{*}-\v_{2}^{*}) 
		+\nabla\partial_{t}(p_{1}-p_{2})&=\mathbf{0},  \ \ \mbox{ in } \ \mathcal{O} \times  [0,\tau_{1} \wedge (\tau-\tau_{1})], \\
		\nabla \cdot (\v_{1}^{*}-\v_{2}^{*})&=0, \ \  \mbox{ in } \ \mathcal{O} \times  [0,\tau_{1} \wedge (\tau-\tau_{1})], \\
		(\v_{1}^{*}-\v_{2}^{*})&=\mathbf{0}, \ \ \mbox{ on } \ \partial \mathcal{O} \times [0,\tau_{1} \wedge (\tau-\tau_{1})], \\
		(\v_{1}^{*}-\v_{2}^{*})&=\mathbf{0},  \ \ \mbox{ in } \ \mathcal{O} \times \{0\} ,
		\end{aligned}
		\right.
		\end{equation}
		and
		\begin{equation}\label{7k}
		\left\{
		\begin{aligned}
		&(k_{1}^{*}-k_{2}^{*})(t)=-\alpha\bigg\{\mu_{0} \int_{\mathcal{O}} (\v_{1}^{*}-\v_{2}^{*})\cdot \Delta \varphi\d x+\int_{0}^{t}\int_{\mathcal{O}}(k_{1}^{*}-k_{2}^{*})(t-s)\v_{1}\cdot\Delta\varphi\d x\d s\\&\quad+\int_{0}^{t}\int_{\mathcal{O}}k_{1}(t-s) (\v_{1}^{*}-\v_{2}^{*})\cdot\Delta\varphi\d x\d s+\int_{\mathcal{O}}\big((\u_{\infty} \cdot \nabla)\varphi\big)\cdot(\v_{1}^*-\v_{2}^*)\d x.
		\end{aligned}
		\right.
		\end{equation}
		Assume that $\delta \in (0,\tau_{1} \wedge(\tau - \tau_{1})]$. Applying the same arguments to estimate $(k_{1}^{*}-k_{2}^{*})$ from equation \eqref{7k} as in Theorem \ref{thm1}, we obtain
		\begin{align*}
		\|k_{1}^{*}-k_{2}^{*}\|_{\mathrm{L}^{2}(0,\delta)} &\leq  \alpha\bigg\{\mu_{0}\|\v_{1}^{*}-\v_{2}^{*}\|_{\mathrm{L}^{2}(0,\delta; \mathbb{L}^{2}(\mathcal{O}))} \|\Delta \varphi\|_{\mathbb{L}^{2}(\mathcal{O})}\\&\quad+\delta^{1/2}\|k_{1}^{*}-k_{2}^{*}\|_{\mathrm{L}^{2}(0,\delta)}\|\Delta \varphi\|_{\mathbb{L}^{2}(\mathcal{O})} \|\v_{1}\|_{\mathrm{L}^{2}(0,\delta; \mathbb{L}^{2}(\mathcal{O}))} \\&\quad+\delta^{1/2}\|k_{1}\|_{\mathrm{L}^{2}(0,\delta)} \|\v_{1}^{*}-\v_{2}^{*}\|_{\mathrm{L}^{2}(0,\delta; \mathbb{L}^{2}(\mathcal{O}))} \|\Delta \varphi\|_{\mathbb{L}^{2}(\mathcal{O})}\\&\quad+C\|\u_{\infty}\|_{\mathbb{H}^{2}(\mathcal{O})}\|\nabla\varphi\|_{\mathbb{L}^{2}(\mathcal{O})}\|\v_{1}^{*}-\v_{2}^{*}\|_{\mathrm{L}^{2}(0,\delta; \mathbb{L}^{2}(\mathcal{O}))}\\&\leq C(\delta)\|\v_{1}^{*}-\v_{2}^{*}\|_{\mathrm{L}^{2}(0,\delta; \mathbb{L}^{2}(\mathcal{O}))}+C\delta^{1/2}\|k_{1}^{*}-k_{2}^{*}\|_{\mathrm{L}^{2}(0,\delta)}
		\end{align*}
		Choose $\delta>0$ such that $C\delta^{1/2}\leq\frac{1}{2}$, we obtain 
		\begin{align}\label{7l}
		\|k_{1}^{*}-k_{2}^{*}\|_{\mathrm{L}^{2}(0,\delta)} \leq C(\delta)\|\v_{1}^{*}-\v_{2}^{*}\|_{\mathrm{L}^{2}(0,\delta; \mathbb{L}^{2}(\mathcal{O}))}.
		\end{align}
		Taking the inner product with $(\v_{1}^{*}-\v_{2}^{*})(\cdot)$ to the first equation in \eqref{7j} and then using integration by parts, we derive that 
		\begin{align*}
		\frac{1}{2} &\frac{\d}{\d t}\left(\|(\v_{1}^{*}-\v_{2}^{*})(t)\|_{\L^2(\mathcal{O})}^2+\mu_{1}\|\nabla(\v_{1}^{*}-\v_{2}^{*})(t)\|_{\L^2(\mathcal{O})}^2\right)+\mu_0\|\nabla(\v_{1}^{*}-\v_{2}^{*})(t)\|_{\L^2(\mathcal{O})}^2\nonumber\\&= -((\u_{\infty}\cdot\nabla)(\v_{1}^{*}-\v_{2}^{*})(t),(\v_{1}^{*}-\v_{2}^{*})(t))-((k^*_{1}-k_{2}^*)*\nabla\v_{1}(t),\nabla(\v_{1}^{*}-\v_{2}^{*})(t))\\&\quad-((k_{1}^*-k_{2}^*)\nabla\u_0,\nabla(\v_{1}^{*}-\v_{2}^{*})(t))-(k_{1}*\nabla(\v_{1}^{*}-\v_{2}^{*})(t),\nabla(\v_{1}^{*}-\v_{2}^{*})(t))\nonumber\\&\leq \|\u_{\infty}\|_{\L^\infty(\mathcal{O})}\|\nabla(\v_{1}^{*}-\v_{2}^{*})(t)\|_{\L^2(\mathcal{O})}\|(\v_{1}^{*}-\v_{2}^{*})(t)\|_{\L^2(\mathcal{O})}\nonumber\\&\quad+\|(k^*_{1}-k_{2}^*)*\nabla\v_{1}(t)\|_{\L^2(\mathcal{O})}\|\nabla(\v_{1}^{*}-\v_{2}^{*})(t)\|_{\L^2(\mathcal{O})}\nonumber\\&\quad+|(k^*_{1}-k_{2}^*)(t)|\|\nabla\u_0\|_{\L^2(\mathcal{O})}\|\nabla(\v_{1}^{*}-\v_{2}^{*})(t)\|_{\L^2(\mathcal{O})}\nonumber\\&\quad+\|k_{1}*\nabla(\v_{1}^{*}-\v_{2}^{*})(t)\|_{\L^2(\mathcal{O})}\|\nabla(\v_{1}^{*}-\v_{2}^{*})(t)\|_{\L^2(\mathcal{O})}\nonumber\\&\leq C\|\u_{\infty}\|_{\H^2(\mathcal{O})}\|\nabla(\v_{1}^{*}-\v_{2}^{*})(t)\|^2_{\L^2(\mathcal{O})}+\frac{\mu_{0}}{2}
		\|\nabla(\v_{1}^{*}-\v_{2}^{*})(t)\|_{\L^2(\mathcal{O})}^2\\&\quad+\frac{3}{2\mu_{0}}\|(k^*_{1}-k_{2}^*)*\nabla\v_{1}(t)\|_{\L^2(\mathcal{O})}^2+\frac{3}{2\mu_{0}}|(k^*_{1}-k_{2}^*)(t)|^2\|\nabla\u_0\|_{\L^2(\mathcal{O})}^2\\&\quad+\frac{3}{2\mu_{0}}\|k_{1}*\nabla(\v_{1}^{*}-\v_{2}^{*})(t)\|_{\L^2(\mathcal{O})}^2,
		\end{align*}
		for a.e. $t\in[0,\delta]$. Integrating the above inequality from $0$ to $t$, we find 
		\begin{align*}
		&\|(\v_{1}^{*}-\v_{2}^{*})(t)\|_{\L^2(\mathcal{O})}^2+\mu_{1}\|\nabla(\v_{1}^{*}-\v_{2}^{*})(t)\|_{\L^2(\mathcal{O})}^2+\mu_0\int_{0}^{t}\|\nabla(\v_{1}^{*}-\v_{2}^{*})(s)\|_{\L^2(\mathcal{O})}^2 \d s\nonumber \\&\leq \mu_{1}\|\nabla(\v_{1}^{*}-\v_{2}^{*})(0)\|_{\L^2(\mathcal{O})}^2 +C\|\u_{\infty}\|_{\H^2(\mathcal{O})}\int_0^t\|\nabla(\v_{1}^{*}-\v_{2}^{*})(s)\|_{\L^2(\mathcal{O})}^2\d s\\&\quad+\frac{3}{\mu_{0}}t\|k^*_{1}-k_{2}^*\|_{\mathrm{L}^2(0,t)}^2\int_0^t\|\nabla\v_{1}(s)\|_{\L^2(\mathcal{O})}^2\d s+\frac{3}{\mu_{0}}\|k^*_{1}-k_{2}^*\|_{\mathrm{L}^2(0,t)}^2\|\nabla\u_0\|_{\L^2(\mathcal{O})}^2\\&\quad+\frac{3}{\mu_{0}}t\|k_{1}\|_{\mathrm{L}^2(0,t)}^2\int_0^t\|\nabla(\v_{1}^{*}-\v_{2}^{*})(s)\|_{\L^2(\mathcal{O})}^2\d s
		\nonumber \\&\leq C\|\nabla(\v_{1}^{*}-\v_{2}^{*})(0)\|_{\L^2(\mathcal{O})}^2+C(\delta)\|\v^*_{1}-\v_{2}^*\|_{\mathrm{L}^2(0,t;\L^2(\mathcal{O}))}^2\int_0^t\|\nabla\v_{1}(s)\|_{\L^2(\mathcal{O})}^2\d s\\&\quad+C\left(\|\u_{\infty}\|_{\H^2(\mathcal{O})}+t\|k_{1}\|_{\mathrm{L}^2(0,t)}^2\right)\int_0^t\|\nabla(\v_{1}^{*}-\v_{2}^{*})(s)\|_{\L^2(\mathcal{O})}^2\d s\nonumber\\&\quad+C(\delta)\|\v^*_{1}-\v_{2}^*\|_{\mathrm{L}^2(0,t;\L^2(\mathcal{O}))}^2\|\nabla\u_0\|_{\L^2(\mathcal{O})}^2,
		\end{align*}
		for all $t\in[0,\delta]$, where we used equation \eqref{7l}. An application of Gronwall's inequality gives 
		\begin{align*}
		\sup_{t\in[0,\delta]}	\|\nabla (\v_{1}^{*}-\v_{2}^{*})(t)\|_{\mathbb{L}^{2}(\mathcal{O})}\leq C(\|\nabla(\v_{1}^{*}-\v_{2}^{*})(0)\|_{\mathbb{L}^{2}(\mathcal{O})},\delta).
		\end{align*}
		Taking the inner product with $\partial_{t}(\v_{1}^{*}-\v_{2}^{*})(\cdot)$ to the first equation in  \eqref{7j}, we obtain
		\begin{align*}
		&\|\partial_{t}(\v_{1}^{*}-\v_{2}^{*})(t)\|_{\L^2(\mathcal{O})}^2+\mu_{1}\|\nabla\partial_{t}(\v_{1}^{*}-\v_{2}^{*})(t)\|_{\L^2(\mathcal{O})}^2+\frac{\mu_0}{2}\frac{\d}{\d t}\|\nabla(\v_{1}^{*}-\v_{2}^{*})(t)\|_{\L^2(\mathcal{O})}^2\nonumber\\&= -((\u_{\infty}\cdot\nabla)(\v_{1}^{*}-\v_{2}^{*})(t),\partial_{t}(\v_{1}^{*}-\v_{2}^{*})(t))-((k^*_{1}-k_{2}^*)*\nabla\v_{1}(t),\nabla\partial_{t}(\v_{1}^{*}-\v_{2}^{*})(t))\\&\quad+((k_{1}^*-k_{2}^*)\Delta\u_0,\partial_{t}(\v_{1}^{*}-\v_{2}^{*})(t))-(k_{1}*\nabla(\v_{1}^{*}-\v_{2}^{*})(t),\nabla\partial_{t}(\v_{1}^{*}-\v_{2}^{*})(t))\nonumber\\&\leq \|\u_{\infty}\|_{\L^\infty(\mathcal{O})}\|\nabla\partial_{t}(\v_{1}^{*}-\v_{2}^{*})(t)\|_{\L^2(\mathcal{O})}\|(\v_{1}^{*}-\v_{2}^{*})(t)\|_{\L^2(\mathcal{O})}+\|(k^*_{1}-k_{2}^*)*\nabla\v_{1}(t)\|_{\L^2(\mathcal{O})}\\&\quad\times\|\nabla\partial_{t}(\v_{1}^{*}-\v_{2}^{*})(t)\|_{\L^2(\mathcal{O})}+|(k^*_{1}-k_{2}^*)(t)|\|\Delta\u_0\|_{\L^2(\mathcal{O})}\|\partial_{t}(\v_{1}^{*}-\v_{2}^{*})(t)\|_{\L^2(\mathcal{O})}\\&\quad+\|k_{1}*\nabla(\v_{1}^{*}-\v_{2}^{*})(t)\|_{\L^2(\mathcal{O})}\|\nabla\partial_{t}(\v_{1}^{*}-\v_{2}^{*})(t)\|_{\L^2(\mathcal{O})}\nonumber\\&\leq C\|\u_{\infty}\|_{\H^2(\mathcal{O})}^2\|(\v_{1}^{*}-\v_{2}^{*})(t)\|_{\L^2(\mathcal{O})}^2+\frac{\mu_{1}}{2}
		\|\nabla\partial_{t}(\v_{1}^{*}-\v_{2}^{*})(t)\|_{\L^2(\mathcal{O})}^2\\&\quad+\frac{3}{2\mu_{1}}\|(k^*_{1}-k_{2}^*)*\nabla\v_{1}(t)\|_{\L^2(\mathcal{O})}^2+\frac{1}{2}|(k^*_{1}-k_{2}^*)(t)|^2\|\Delta\u_0\|_{\L^2(\mathcal{O})}^2\\&\quad+\frac{1}{2}\|\partial_{t}(\v_{1}^{*}-\v_{2}^{*})(t)\|_{\L^2(\mathcal{O})}^2+\frac{3}{2\mu_{1}}\|k_{1}*\nabla(\v_{1}^{*}-\v_{2}^{*})(t)\|_{\L^2(\mathcal{O})}^2,
		\end{align*}
		for a.e. $t \in[0,\delta]$. Integrating the above inequality from $0$ to $t$, we get
		\begin{align*}
		&\mu_0\|\nabla(\v_{1}^{*}-\v_{2}^{*})(t)\|_{\L^2(\mathcal{O})}^2+\int_{0}^{t}\|\partial_{t}(\v_{1}^{*}-\v_{2}^{*})(s)\|_{\L^2(\mathcal{O})}^2\d s+\mu_{1}\int_{0}^{t}\|\nabla\partial_{t}(\v_{1}^{*}-\v_{2}^{*})(s)\|_{\L^2(\mathcal{O})}^2\d s\\&\leq \mu_0\|\nabla(\v_{1}^{*}-\v_{2}^{*})(0)\|_{\L^2(\mathcal{O})}^2+C\|\u_{\infty}\|_{\H^2(\mathcal{O})}^2\int_0^t\|(\v_{1}^{*}-\v_{2}^{*})(s)\|_{\L^2(\mathcal{O})}^2\d s\\&\quad+\frac{3}{\mu_{1}}t\|k^*_{1}-k_{2}^*\|_{\L^2(0,t)}^2\int_0^t\|\nabla\v_{1}(s)\|_{\L^2(\mathcal{O})}^2\d s+\|k^*_{1}-k_{2}^*\|_{\L^2(0,t)}^2\|\Delta\u_0\|_{\L^2(\mathcal{O})}^2\\&\quad+\frac{3}{\mu_{1}}t\|k_{1}\|_{\L^2(0,t)}^2\int_0^t\|\nabla(\v_{1}^{*}-\v_{2}^{*})(s)\|_{\L^2(\mathcal{O})}^2\d s,
		\end{align*}
		for all $t \in [0,\delta]$. Thus, it is immediate from equation \eqref{7l} that $ \partial_{t}(\v_{1}^{*}-\v_{2}^{*}) \in \mathrm{L}^{2}(0,\delta;\mathbb{H}^{1}(\mathcal{O}))$.
		Taking divergence of the first equation in  \eqref{7j}, we deduce that 
		\begin{align*}
		\partial_{t}(p_1-p_2)=(-\Delta)^{-1}\big[\nabla \cdot[(\u_{\infty}\cdot \nabla)(\v_{1}^{*}-\v_{2}^{*})]\big],
		\end{align*}
		in the weak sense.	Therefore, we easily have 
		\begin{align*}
		\sup_{t\in[0,\delta]}\|\nabla \partial_{t}(p_1-p_2)(t)\|_{\mathbb{L}^2(\mathcal{O})}\nonumber&\leq C\sup_{t\in[0,\delta]}\|(\u_{\infty}\cdot \nabla)(\v_{1}^{*}-\v_{2}^{*})(t)\|_{\mathbb{L}^2(\mathcal{O})}\\&\leq C\sup_{t\in[0,\delta]}\left(\|\u_\infty\|_{\L^{\infty}(\mathcal{O})}\|\nabla(\v_{1}^{*}-\v_{2}^{*})(t)\|_{\L^2(\mathcal{O})}\right),
		\end{align*}
		which is finite and hence $ \nabla\partial_{t}(p_1-p_2)\in\mathrm{L}^{\infty}(0,\delta;\mathbb{L}^2(\mathcal{O}))$. Taking the inner product with $-\Delta(\v_{1}^{*}-\v_{2}^{*})(\cdot)$ to the first equation in  \eqref{7j}, we obtain
		\begin{align*}
		&\frac{1}{2} \frac{\d}{\d t}\left(\|\nabla(\v_{1}^{*}-\v_{2}^{*})(t)\|_{\L^2(\mathcal{O})}^2+\mu_{1}\|\Delta(\v_{1}^{*}-\v_{2}^{*})(t)\|_{\L^2(\mathcal{O})}^2\right)+\mu_0\|\Delta(\v_{1}^{*}-\v_{2}^{*})(t)\|_{\L^2(\mathcal{O})}^2\nonumber\\&\quad= ((\u_{\infty}\cdot\nabla)(\v_{1}^{*}-\v_{2}^{*})(t),\Delta(\v_{1}^{*}-\v_{2}^{*})(t))-((k^*_{1}-k_{2}^*)*\Delta\v_{1}(t),\Delta(\v_{1}^{*}-\v_{2}^{*})(t))\\&\quad-((k_{1}^*-k_{2}^*)\Delta\u_0,\Delta(\v_{1}^{*}-\v_{2}^{*})(t))-(k_{1}*\Delta(\v_{1}^{*}-\v_{2}^{*})(t),\Delta(\v_{1}^{*}-\v_{2}^{*})(t))\\&\quad+(\nabla\partial_{t}(p_1-p_2),\Delta(\v_{1}^{*}-\v_{2}^{*})(t))\nonumber\\&\leq \|\u_{\infty}\|_{\L^\infty(\mathcal{O})}\|\nabla(\v_{1}^{*}-\v_{2}^{*})(t)\|_{\L^2(\mathcal{O})}\|\Delta(\v_{1}^{*}-\v_{2}^{*})(t)\|_{\L^2(\mathcal{O})}+\|(k^*_{1}-k_{2}^*)*\Delta\v_{1}(t)\|_{\L^2(\mathcal{O})}\\&\quad\times\|\Delta(\v_{1}^{*}-\v_{2}^{*})(t)\|_{\L^2(\mathcal{O})}\nonumber+|(k^*_{1}-k_{2}^*)(t)|\|\Delta\u_0\|_{\L^2(\mathcal{O})}\|\Delta(\v_{1}^{*}-\v_{2}^{*})(t)\|_{\L^2(\mathcal{O})}\\&\quad+\|k_{1}*\Delta(\v_{1}^{*}-\v_{2}^{*})(t)\|_{\L^2(\mathcal{O})}\|\Delta(\v_{1}^{*}-\v_{2}^{*})(t)\|_{\L^2(\mathcal{O})}\nonumber\\&\quad+\|\nabla\partial_{t}(p_1-p_2)\|_{\mathbb{L}^2(\mathcal{O})}\|\Delta(\v_{1}^{*}-\v_{2}^{*})(t)\|_{\mathbb{L}^2(\mathcal{O})}\nonumber\\&\leq C\|\u_{\infty}\|_{\H^2(\mathcal{O})}^2\|\nabla(\v_{1}^{*}-\v_{2}^{*})(t)\|_{\L^2(\mathcal{O})}^2+\frac{\mu_{0}}{2}
		\|\Delta(\v_{1}^{*}-\v_{2}^{*})(t)\|_{\L^2(\mathcal{O})}^2\nonumber\\&\quad+\frac{5}{2\mu_{0}}\|(k^*_{1}-k_{2}^*)*\Delta\v_{1}(t)\|_{\L^2(\mathcal{O})}^2+\frac{5}{2\mu_{0}}|(k^*_{1}-k_{2}^*)(t)|^2\|\Delta\u_0\|_{\L^2(\mathcal{O})}^2\nonumber\\&\quad+\frac{5}{2\mu_{0}}\|k_{1}*\Delta(\v_{1}^{*}-\v_{2}^{*})(t)\|_{\L^2(\mathcal{O})}^2+\frac{5}{2\mu_{0}}\|\nabla\partial_{t}(p_1-p_2)\|_{\mathbb{L}^2(\mathcal{O})}^2,
		\end{align*}
		for a.e. $t\in[0,\delta]$. Integrating the above inequality from $0$ to $t$, we find 
		\begin{align*}
		&\|\nabla(\v_{1}^{*}-\v_{2}^{*})(t)\|_{\L^2(\mathcal{O})}^2+\mu_{1}\|\Delta(\v_{1}^{*}-\v_{2}^{*})(t)\|_{\L^2(\mathcal{O})}^2+\mu_0\int_{0}^{t}\|\Delta(\v_{1}^{*}-\v_{2}^{*})(s)\|_{\L^2(\mathcal{O})}^2 \d s\nonumber \\&\leq \|\nabla(\v_{1}^{*}-\v_{2}^{*})(0)\|_{\L^2(\mathcal{O})}^2+\mu_{1}\|\Delta(\v_{1}^{*}-\v_{2}^{*})(0)\|_{\L^2(\mathcal{O})}^2 +Ct\|\u_{\infty}\|_{\H^2(\mathcal{O})}^2\\&\quad+\frac{5}{\mu_{0}}t\|k^*_{1}-k_{2}^*\|_{\mathrm{L}^2(0,t)}^2\int_0^t\|\Delta\v_{1}(s)\|_{\L^2(\mathcal{O})}^2\d s+\frac{5}{\mu_{0}}\|k^*_{1}-k_{2}^*\|_{\mathrm{L}^2(0,t)}^2\|\Delta\u_0\|_{\L^2(\mathcal{O})}^2\nonumber\\&\quad+\frac{5}{\mu_{0}}t\|k_{1}\|_{\mathrm{L}^2(0,t)}^2\int_0^t\|\Delta(\v_{1}^{*}-\v_{2}^{*})(s)\|_{\L^2(\mathcal{O})}^2\d s +Ct	\sup_{t\in[0,\delta]}\|\nabla \partial_{t}(p_1-p_2)(t)\|_{\mathbb{L}^2(\mathcal{O})}
		\nonumber \\&\leq \|\nabla(\v_{1}^{*}-\v_{2}^{*})(0)\|_{\L^2(\mathcal{O})}^2+\mu_{1}\|\Delta(\v_{1}^{*}-\v_{2}^{*})(0)\|_{\L^2(\mathcal{O})}^2+Ct\|\u_{\infty}\|_{\H^2(\mathcal{O})}^2\\&\quad+ C(\delta)t\|\v^*_{1}-\v_{2}^*\|_{\mathrm{L}^2(0,t;\L^2(\mathcal{O}))}^2\int_0^t\|\Delta\v_{1}(s)\|_{\L^2(\mathcal{O})}^2\d s+C(\delta)\|\v^*_{1}-\v_{2}^*\|_{\mathrm{L}^2(0,t;\L^2(\mathcal{O}))}^2\|\Delta\u_0\|_{\L^2(\mathcal{O})}^2\nonumber\\&\quad+\frac{5}{\mu_{0}}t\|k_{1}\|_{\mathrm{L}^2(0,t)}^2\int_0^t\|\Delta(\v_{1}^{*}-\v_{2}^{*})(s)\|_{\L^2(\mathcal{O})}^2\d s+Ct	\sup_{t\in[0,\delta]}\|\nabla \partial_{t}(p_1-p_2)(t)\|_{\mathbb{L}^2(\mathcal{O})},
		\end{align*}
		for all $t\in[0,\delta]$, where we used equation \eqref{7l}. An application of Gronwall's inequality gives 
		\begin{align*}
		\sup_{t\in[0,\delta]}	\|\Delta (\v_{1}^{*}-\v_{2}^{*})(t)\|_{\mathbb{L}^{2}(\mathcal{O})}\leq C(\|(\v_{1}^{*}-\v_{2}^{*})(0)\|_{\mathbb{H}_{0}^{1}(\mathcal{O})},\|\Delta(\v_{1}^{*}-\v_{2}^{*})(0)\|_{\mathbb{L}^{2}(\mathcal{O})},\delta).
		\end{align*}
		Finally, taking the inner product with $-\Delta\partial_{t}(\v_{1}^{*}-\v_{2}^{*})(\cdot)$ to the first equation in  \eqref{7j}, we obtain
		\begin{align*}
		&\|\nabla\partial_{t}(\v_{1}^{*}-\v_{2}^{*})(t)\|_{\L^2(\mathcal{O})}^2+\mu_{1}\|\Delta\partial_{t}(\v_{1}^{*}-\v_{2}^{*})(t)\|_{\L^2(\mathcal{O})}^2+\frac{\mu_0}{2}\frac{\d}{\d t}\|\Delta(\v_{1}^{*}-\v_{2}^{*})(t)\|_{\L^2(\mathcal{O})}^2\nonumber\\&= ((\u_{\infty}\cdot\nabla)(\v_{1}^{*}-\v_{2}^{*})(t),\Delta\partial_{t}(\v_{1}^{*}-\v_{2}^{*})(t))-((k^*_{1}-k_{2}^*)*\Delta\v_{1}(t),\Delta\partial_{t}(\v_{1}^{*}-\v_{2}^{*})(t))\\&\quad-((k_{1}^*-k_{2}^*)\Delta\u_0,\Delta\partial_{t}(\v_{1}^{*}-\v_{2}^{*})(t))-(k_{1}*\Delta(\v_{1}^{*}-\v_{2}^{*})(t),\Delta\partial_{t}(\v_{1}^{*}-\v_{2}^{*})(t))\\&\quad+(\nabla\partial_t(p_1-p_2),\Delta\partial_{t}(\v_{1}^{*}-\v_{2}^{*})(t))\nonumber\\&\leq \|\u_{\infty}\|_{\L^\infty(\mathcal{O})}\|\Delta\partial_{t}(\v_{1}^{*}-\v_{2}^{*})(t)\|_{\L^2(\mathcal{O})}\|\nabla(\v_{1}^{*}-\v_{2}^{*})(t)\|_{\L^2(\mathcal{O})}\\&\quad+\|(k^*_{1}-k_{2}^*)*\Delta\v_{1}(t)\|_{\L^2(\mathcal{O})}\|\Delta\partial_{t}(\v_{1}^{*}-\v_{2}^{*})(t)\|_{\L^2(\mathcal{O})}\\&\quad+|(k^*_{1}-k_{2}^*)(t)|\|\Delta\u_0\|_{\L^2(\mathcal{O})}\|\Delta\partial_{t}(\v_{1}^{*}-\v_{2}^{*})(t)\|_{\L^2(\mathcal{O})}\\&\quad+\|k_{1}*\Delta(\v_{1}^{*}-\v_{2}^{*})(t)\|_{\L^2(\mathcal{O})}\|\Delta\partial_{t}(\v_{1}^{*}-\v_{2}^{*})(t)\|_{\L^2(\mathcal{O})}\nonumber\\&\quad+\|\nabla\partial_{t}(p_1-p_2)\|_{\mathbb{L}^2(\mathcal{O})}\|\Delta\partial_{t}(\v_{1}^{*}-\v_{2}^{*})(t)\|_{\mathbb{L}^2(\mathcal{O})}\nonumber\\&\leq C\|\u_{\infty}\|_{\H^2(\mathcal{O})}^2\|\nabla(\v_{1}^{*}-\v_{2}^{*})(t)\|_{\L^2(\mathcal{O})}^2+\frac{\mu_{1}}{2}
		\|\Delta\partial_{t}(\v_{1}^{*}-\v_{2}^{*})(t)\|_{\L^2(\mathcal{O})}^2\nonumber\\&\quad+\frac{5}{2\mu_{1}}\|(k^*_{1}-k_{2}^*)*\Delta\v_{1}(t)\|_{\L^2(\mathcal{O})}^2+\frac{5}{2\mu_1}|(k^*_{1}-k_{2}^*)(t)|^2\|\Delta\u_0\|_{\L^2(\mathcal{O})}^2\nonumber\\&\quad+\frac{5}{2\mu_{1}}\|k_{1}*\Delta(\v_{1}^{*}-\v_{2}^{*})(t)\|_{\L^2(\mathcal{O})}^2+\frac{5}{2\mu_{1}}\|\nabla\partial_{t}(p_1-p_2)\|_{\mathbb{L}^2(\mathcal{O})}^2,
		\end{align*}
		for a.e. $t\in[0,\delta]$. From the above inequality, we one can easily deduce that 
		\begin{align*}
		&\mu_{1}\|\Delta\partial_{t}(\v_{1}^{*}-\v_{2}^{*})(t)\|_{\L^2(\mathcal{O})}^2+\mu_{0}\frac{\d}{\d t}\|\Delta(\v_{1}^{*}-\v_{2}^{*})(t)\|_{\L^2(\mathcal{O})}^2\\&\leq C\|\u_{\infty}\|_{\H^2(\mathcal{O})}^2\|\nabla(\v_{1}^{*}-\v_{2}^{*})(t)\|_{\L^2(\mathcal{O})}^2+\frac{5}{\mu_{1}}\|(k^*_{1}-k_{2}^*)*\Delta\v_{1}(t)\|_{\L^2(\mathcal{O})}^2\\&\quad+\frac{5}{\mu_1}|(k^*_{1}-k_{2}^*)(t)|^2\|\Delta\u_0\|_{\L^2(\mathcal{O})}^2+\frac{5}{\mu_{1}}\|k_{1}*\Delta(\v_{1}^{*}-\v_{2}^{*})(t)\|_{\L^2(\mathcal{O})}^2\nonumber\\&\quad+\frac{5}{\mu_{1}}\|\nabla\partial_{t}(p_1-p_2)\|_{\mathbb{L}^2(\mathcal{O})}^2,
		\end{align*}
		for a.e. $t\in[0,\delta]$. Integrating the above inequality from $0$ to $t$, we find 
		\begin{align*}
		&\mu_{1}\int_0^t\|\Delta\partial_{t}(\v_{1}^{*}-\v_{2}^{*})(s)\|_{\L^2(\mathcal{O})}^2\d s+\mu_0\|\Delta(\v_{1}^{*}-\v_{2}^{*})(t)\|_{\L^2(\mathcal{O})}^2\\&\leq\mu_0\|\Delta(\v_{1}^{*}-\v_{2}^{*})(0)\|_{\L^2(\mathcal{O})}^2+ C\|\u_{\infty}\|_{\H^2(\mathcal{O})}^2\int_0^t\|\nabla(\v_{1}^{*}-\v_{2}^{*})(s)\|_{\L^2(\mathcal{O})}^2\d s\\&\quad+\frac{5}{\mu_{1}}t\|k^*_{1}-k_{2}^*\|_{\L^2(0,t)}^2\int_0^t\|\Delta\v_{1}(s)\|_{\L^2(\mathcal{O})}^2\d s+\frac{5}{\mu_1}\|k^*_{1}-k_{2}^*\|_{\L^2(0,t)}^2\|\Delta\u_0\|_{\L^2(\mathcal{O})}^2\\&\quad+\frac{5}{\mu_{1}}t\|k_{1}\|_{\L^2(0,t)}^2\int_0^t\|\Delta(\v_{1}^{*}-\v_{2}^{*})(s)\|_{\L^2(\mathcal{O})}^2\d s\nonumber+Ct	\sup_{t\in[0,\delta]}\|\nabla \partial_{t}(p_1-p_2)(t)\|_{\mathbb{L}^2(\mathcal{O})}.
		\end{align*}
		Thus, it is immediate that for $\delta \in (0,\tau_{1} \wedge(\tau - \tau_{1})]$
		\begin{align*}
		\|\v_{1}^{*}-\v_{2}^{*}\|_{\mathrm{H}^{1}(0,\delta;\mathbb{H}_{0}^{1}(\mathcal{O}) \cap \mathbb{H}^{2}(\mathcal{O}))} &\leq C(\delta)\|k_{1}^{*}-k_{2}^{*}\|_{\mathrm{L}^{2}(0,\delta)}.
		\end{align*}
		Using \eqref{7l} in the above estimate, we obtain
		\begin{align*}
		\|\v_{1}^{*}-\v_{2}^{*}\|_{\mathrm{H}^{1}(0,\delta;\mathbb{H}_{0}^{1}(\mathcal{O}) \cap \mathbb{H}^{2}(\mathcal{O}))} \leq C(\delta)\|\v_{1}^{*}-\v_{2}^{*}\|_{\mathrm{H}^{1}(0,\delta;\mathbb{H}_{0}^{1}(\mathcal{O}) \cap \mathbb{H}^{2}(\mathcal{O}))}.
		\end{align*}
		If $\delta$ is sufficiently small such that
		$\|\v_{1}^{*}-\v_{2}^{*}\|_{\mathrm{H}^{1}(0,\delta;\mathbb{H}_{0}^{1}(\mathcal{O}) \cap \mathbb{H}^{2}(\mathcal{O}))}=0$. Then, \eqref{7l} ensures that $k_{1}^{*}-k_{2}^{*}$ also vanish in some neighborhood of $0$, 
		which contradicts the definition of $\tau_{1}$. Hence, we conclude that $ \v_{1}=\v_{2}$ and $k_{1}=k_{2}$, which completes the proof.
	\end{proof}
	\begin{theorem}[Global in time existence and uniqueness]\label{Thm3}
		Let the assumptions $(H1)$-$(H5)$ hold. Let $T>0$. Then the inverse problem \ref{IP2} has a unique solution $$(\u,k) \in \mathrm{H}^{2}(0,T;\mathbb{H}_{0}^{1}(\mathcal{O}) \cap \mathbb{H}^{2}(\mathcal{O})) \times \mathrm{L}^{2}(0,T).$$
	\end{theorem}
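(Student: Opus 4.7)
The plan is to combine the local existence from Theorem \ref{Thm1}, the global uniqueness from Theorem \ref{Thm2}, and a global a priori bound on the maximal solution, to extend a local solution iteratively up to any preassigned $T>0$. By Theorem \ref{Equi-form1}, it suffices to produce a global solution $(\v,k)\in \mathrm{H}^{1}(0,T;\mathbb{H}_{0}^{1}(\mathcal{O})\cap\mathbb{H}^{2}(\mathcal{O}))\times \mathrm{L}^{2}(0,T)$ to the equivalent linearized system \eqref{5.9}--\eqref{5.13}; uniqueness is then automatic from Theorem \ref{Thm2}.

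\textbf{Step 1: Maximal solution.} By Theorem \ref{Thm1} there is a local solution on some $[0,\tau_{0}]$. Define
\[
\tau^{*}:=\sup\bigl\{\tau\in(0,T]:\ \text{\eqref{5.9}--\eqref{5.13} has a solution on }[0,\tau]\bigr\}\in(0,T].
\]
Suppose, aiming at a contradiction, that $\tau^{*}<T$. Let $(\v,k)$ denote the maximal solution on $[0,\tau^{*})$; this is well-defined by Theorem \ref{Thm2}.

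\textbf{Step 2: A priori bounds independent of $\tau^{*}$.} Here the linearity of the Oseen system in the unknowns is crucial. Testing \eqref{5.9} successively with $\v$, $-\Delta\v$, $\partial_{t}\v$ and $-\Delta\partial_{t}\v$ (and eliminating $\nabla\partial_{t}p$ by taking divergence, as done in Theorem \ref{Thm2}) gives energy identities in which the convection term $(\u_{\infty}\cdot\nabla)\v$ either vanishes (since $\nabla\cdot\u_{\infty}=0$) or is controlled by $\|\u_{\infty}\|_{\mathbb{H}^{2}}$ times Sobolev norms of $\v$. Using Young's inequality for the convolution $k\ast\Delta\v$ (Lemma \ref{Lemma1}), one obtains, for every $t\in[0,\tau^{*})$,
\begin{equation*}
\|\v\|_{\mathrm{H}^{1}(0,t;\mathbb{H}_{0}^{1}(\mathcal{O})\cap\mathbb{H}^{2}(\mathcal{O}))}^{2}
\leq C_{1}\Bigl(\|\v_{0}\|_{\mathbb{H}_{0}^{1}\cap\mathbb{H}^{2}}^{2}+\|k\|_{\mathrm{L}^{2}(0,t)}^{2}\bigl(1+\|\v\|_{\mathrm{L}^{2}(0,t;\mathbb{H}^{2})}^{2}\bigr)\Bigr),
\end{equation*}
with $C_{1}=C_{1}(\mathcal{O},\mu_{0},\mu_{1},\u_{0},\u_{\infty},T)$. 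On the other hand, equation \eqref{5.13} together with Lemma \ref{Lemma1} gives the Volterra-type bound
\begin{equation*}
\|k\|_{\mathrm{L}^{2}(0,t)}\leq C_{2}\Bigl(\|r''\|_{\mathrm{L}^{2}(0,T)}+\bigl(1+\|k\|_{\mathrm{L}^{2}(0,t)}\bigr)\|\v\|_{\mathrm{L}^{2}(0,t;\mathbb{L}^{2})}\Bigr).
\end{equation*}
Inserting one estimate into the other and invoking Gronwall's inequality (in its integral form) yields a bound
\[
\|\v\|_{\mathrm{H}^{1}(0,t;\mathbb{H}_{0}^{1}\cap\mathbb{H}^{2})}+\|k\|_{\mathrm{L}^{2}(0,t)}\leq M(T)\qquad\forall\,t\in[0,\tau^{*}),
\]
where $M(T)$ depends only on the data and $T$, not on $\tau^{*}$.

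\textbf{Step 3: Extension across $\tau^{*}$.} The uniform bound of Step 2 implies, by reflexivity, that $(\v,k)$ extends continuously to $[0,\tau^{*}]$ with $\v(\tau^{*})\in\mathbb{H}_{0}^{1}(\mathcal{O})\cap\mathbb{H}^{2}(\mathcal{O})$. We now set up the shifted problem of Lemma \ref{Lemma7a} at time $\tau^{*}$: the unknown $(\v_{\tau^{*}},k_{\tau^{*}})$ solves the system \eqref{7a}--\eqref{7b} on $[0,\tau''-\tau^{*}]$ for any $\tau''\in(\tau^{*},T]$, where the function $h$ defined in (F6) lies in $\mathrm{L}^{2}(0,\tau''-\tau^{*};\mathbb{L}^{2}(\mathcal{O}))$ and the remaining data satisfy the analogues of (H1)--(H5). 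The local-existence argument of Theorem \ref{Thm1} adapts verbatim to this shifted system, because the added terms $\int_{0}^{t}\hat{k}(t-s)\Delta\v_{\tau^{*}}(s)\d s$ and the history integrals are of convolution type and are handled by Lemma \ref{Lemma1} exactly as in \eqref{G2}. Hence there is $\delta>0$ and a solution $(\v_{\tau^{*}},k_{\tau^{*}})$ on $[0,\delta]$. Concatenation (item (VII) of Lemma \ref{Lemma7a}) produces a solution of \eqref{5.9}--\eqref{5.13} on $[0,\tau^{*}+\delta]$, contradicting the definition of $\tau^{*}$. Therefore $\tau^{*}=T$, and returning to the original unknowns via $\u(t)=\u_{0}+\int_{0}^{t}\v(s)\d s$ completes the proof.

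\textbf{Main obstacle.} The crux is Step 2: closing the coupling between the energy estimate for $\v$ and the Volterra identity for $k$ so that the resulting Gronwall bound remains finite up to any $T$. In the fully nonlinear Kelvin--Voigt case the cubic quantity $\|(\u\cdot\nabla)\u\|_{\mathbb{L}^{2}}$ prevents such a closure, but in the present Oseen case the linearity in $\v$ (through $\u_{\infty}$) reduces the coupling to a Gronwall-type inequality with bounded coefficients, which is what makes the global result accessible.
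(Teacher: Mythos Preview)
Your overall architecture (maximal time, a~priori bound, extension, contradiction) matches the paper's, and Steps~1 and~3 are fine. The gap is in Step~2. The displayed estimate
\[
\|\v\|_{\mathrm{H}^{1}(0,t;\mathbb{H}_{0}^{1}\cap\mathbb{H}^{2})}^{2}
\leq C_{1}\Bigl(\|\v_{0}\|^{2}+\|k\|_{\mathrm{L}^{2}(0,t)}^{2}\bigl(1+\|\v\|_{\mathrm{L}^{2}(0,t;\mathbb{H}^{2})}^{2}\bigr)\Bigr)
\]
contains the product $\|k\|_{\mathrm{L}^{2}(0,t)}^{2}\,\|\v\|_{\mathrm{L}^{2}(0,t;\mathbb{H}^{2})}^{2}$, which comes from the convolution $k\ast\Delta\v$. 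Since both $k$ and $\v$ are unknown, this term is genuinely bilinear in the pair $(\v,k)$; substituting your second (Volterra) estimate into it produces a quartic expression, and no Gronwall or bootstrap argument closes. Your concluding remark that ``the linearity in $\v$ (through $\u_{\infty}$) reduces the coupling to a Gronwall-type inequality'' is the misdiagnosis: replacing $(\u\cdot\nabla)\u$ by $(\u_{\infty}\cdot\nabla)\v$ removes \emph{one} nonlinearity, but the convolution remains bilinear in the unknowns regardless of the convection term.

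The paper overcomes exactly this obstruction by the Colombo--Guidetti splitting (Lemma~\ref{Lemma7a} and Lemma~\ref{Lemma 8c}), which you invoke only in Step~3 but which is in fact the engine of the a~priori bound. One fixes an intermediate time (the paper takes $T''=T'/2$) and uses the identity \eqref{7g} to rewrite the convolution on the shifted interval $[\tau,\tau+\delta]$, $\delta\le\tau$, as
\[
k_{\tau}\ast\Delta\hat{\v}\;+\;\hat{k}\ast\Delta\v_{\tau}\;+\;\text{(known history)},
\]
where $(\hat{\v},\hat{k})=(\v,k)|_{[0,\tau]}$ are now \emph{data}. Each piece is linear in the shifted unknowns $(\v_{\tau},k_{\tau})$, so the resulting energy inequalities \eqref{8.13}--\eqref{8.15*} have coefficients depending only on $(\hat{\v},\hat{k},T)$, and Lemma~\ref{Lemma 8b} then yields a bona fide Gronwall closure. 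Because $(\hat{\v},\hat{k})$ is the fixed solution on $[0,T'']$, the bound is uniform for all $\tau'\in[T'',T')$. In short, the splitting that you reserve for local continuation at $\tau^{*}$ is precisely what is needed to obtain the uniform bound that allows you to reach $\tau^{*}$ in the first place.
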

	To prove Theorem \ref{Thm3}, that is, the global solvability of the inverse problem \eqref{5.2}-\eqref{5.6}, we prove the result for its equivalent form, that is, for any given time $T>0$, there exists a solution to the system \eqref{5.9}-\eqref{5.13}. We start with the following Lemmas, which will be useful in the sequel.
	\begin{lemma}\label{Lemma 8a}
		Let the	assumptions $(H1)$-$(H5)$ hold. Let $\tau\in(0,T)$ and let $$(\hat{\v},\hat{k})\in \mathrm{H}^{1}(0,\tau;\mathbb{H}_{0}^{1}(\mathcal{O}) \cap \mathbb{H}^{2}(\mathcal{O})) \times \mathrm{L}^{2}(0,\tau)$$
		solves the system \eqref{5.9}-\eqref{5.13} on $[0,\tau]$. Then there exists $\delta\in (0,T-\tau]$, such that  the solution $(\hat{\v},\hat{k})$ can be extended to a solution $(\v,k)$ on $[0,\tau + \delta]$, such that $$(\v,k) \in \mathrm{H}^{1}(0,\tau +\delta; \mathbb{H}_{0}^{1}(\mathcal{O})\cap \mathbb{H}^{2}(\mathcal{O}))\times \mathrm{L}^{2}(0,\tau +\delta).$$
	\end{lemma}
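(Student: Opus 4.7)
The strategy is to translate the extension problem at time $\tau$ into a fresh local existence problem on $[0,\delta]$ via Lemma \ref{Lemma7a}, and then essentially rerun the contraction mapping argument of Theorem \ref{Thm1} for the shifted system \eqref{7a}--\eqref{7b}. Concretely, set $\tau_* := T-\tau$ and let $\u_\tau$, $h$, $r_\tau$ be as in (F6)--(F8). The goal is to produce, for some $\delta \in (0,\tau \wedge \tau_*]$, a pair $(\v_\tau,k_\tau) \in \mathrm{H}^1(0,\delta;\mathbb{H}_0^1(\mathcal{O})\cap \mathbb{H}^2(\mathcal{O})) \times \mathrm{L}^2(0,\delta)$ solving \eqref{7a}--\eqref{7b}; then part (VII) of Lemma \ref{Lemma7a} glues this together with $(\hat{\v},\hat{k})$ to give an extension on $[0,\tau+\delta]$ with the required regularity.

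To construct $(\v_\tau,k_\tau)$, I introduce the closed ball
\begin{align*}
\mathbf{V}_\tau(\delta,M) := \Big\{(\tilde{\v},\tilde{k}) &\in \mathrm{H}^1(0,\delta;\mathbb{H}_0^1(\mathcal{O})\cap \mathbb{H}^2(\mathcal{O})) \times \mathrm{L}^2(0,\delta) : \nabla\cdot\tilde{\v}=0, \\
&\tilde{\v}|_{\partial \mathcal{O}}=\mathbf{0},\ \tilde{\v}(0)=\u_\tau,\ \|\tilde{\v}\|_{\mathrm{H}^1(0,\delta;\mathbb{H}_0^1\cap \mathbb{H}^2)} + \|\tilde{k}\|_{\mathrm{L}^2(0,\delta)} \leq M \Big\},
\end{align*}
and define a map $\Gamma_\tau : \mathbf{V}_\tau(\delta,M) \to \mathbf{V}_\tau(\delta,M)$ by letting $(\v,k)$ solve the linear problem obtained from \eqref{7a}--\eqref{7b} after replacing the convolution factors and the $\v_\tau$ inside the nonlinear coefficients with $(\tilde{\v},\tilde{k})$, exactly in the spirit of \eqref{6a}--\eqref{6b}. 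The terms peculiar to the shifted problem split into three groups: (i) linear convolutions $\hat{k}\ast \Delta\tilde{\v}$ and $\tilde{k}\ast\Delta\hat{\v}$, which are estimated by Lemma \ref{Lemma1} and contribute factors of $\delta^{1/2}\|\hat{k}\|_{\mathrm{L}^2(0,\tau)}$ or $\delta^{1/2}\|\hat{\v}\|_{\mathrm{L}^2(0,\tau;\mathbb{H}^2)}$; (ii) the forcing $h$ and the lag integral $\int_t^\tau \hat{k}(\tau+t-s)\,\hat{\v}(s)\,\d s$, which are fixed inputs lying in $\mathrm{L}^2(0,\tau_*;\mathbb{L}^2(\mathcal{O}))$ and $\mathrm{L}^2(0,\tau_*)$ respectively (by Lemma \ref{Lemma1} applied on $[0,\tau]$); (iii) the Oseen convective term $(\u_\infty\cdot\nabla)\tilde{\v}$, already handled as in Theorem \ref{Thm1}.

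Estimating each group in $\mathrm{L}^2(0,\delta;\mathbb{L}^2(\mathcal{O}))$ or $\mathrm{L}^2(0,\delta)$ and invoking the linear energy estimate for the Kelvin--Voigt operator on the left-hand side of \eqref{7a}, one obtains bounds of the form $\|\v\|_{\mathrm{H}^1}+\|k\|_{\mathrm{L}^2} \leq \Phi(\delta,M)$ with $\Phi$ non-decreasing in $\delta$ and with its $M$-dependent part carrying a factor $\delta^{1/2}$. Choosing first $M$ large (in terms of $\|\u_\tau\|_{\mathbb{H}_0^1\cap \mathbb{H}^2}$, $\|h\|_{\mathrm{L}^2(0,\tau_*;\mathbb{L}^2)}$, $\|r_\tau''\|_{\mathrm{L}^2}$, and the fixed data $\hat{\v},\hat{k}$) and then $\delta$ small enough shows $\Gamma_\tau$ maps the ball into itself. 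The contraction estimate is derived by subtracting two instances of $\Gamma_\tau$ and estimating each difference term by the same methods as in Step 2 of Theorem \ref{Thm1}: every new contribution is linear in $(\tilde{\v}_1-\tilde{\v}_2,\tilde{k}_1-\tilde{k}_2)$ and carries a $\delta^{1/2}$ factor, so shrinking $\delta$ further yields a genuine contraction. Applying the Banach fixed point theorem produces the required $(\v_\tau,k_\tau)$, and gluing via Lemma \ref{Lemma7a}(VII) completes the proof.

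The main obstacle I anticipate is verifying that the lag-forcing $h$ from (F6) and the tail integral in \eqref{7b} genuinely sit in $\mathrm{L}^2$ with norms bounded by the data on $[0,\tau]$ and do not interfere with the $\delta^{1/2}$-smallness needed for the contraction; this is a direct application of Young's convolution inequality together with the regularity $\hat{\v}\in \mathrm{H}^1(0,\tau;\mathbb{H}^2)$ and $\hat{k}\in \mathrm{L}^2(0,\tau)$, but must be tracked carefully because these norms are fixed (not small) and only enter the contraction constant multiplicatively against $\delta^{1/2}$.
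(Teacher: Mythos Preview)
Your proposal is correct and follows essentially the same route as the paper: reduce the extension problem to solving the shifted system \eqref{7a}--\eqref{7b} on $[0,\delta]$ via Lemma \ref{Lemma7a}, set up the ball $\mathbf{V}_\tau(\delta,M)$ and the fixed-point map exactly as you describe (the paper writes this out as \eqref{8.1}--\eqref{8.2}), and then rerun the contraction argument of Theorem \ref{thm1}/\ref{Thm1}. Your care in taking $\delta\le\tau\wedge(T-\tau)$ so that the splitting \eqref{7g} is valid, and in noting that $h$ and the tail integral enter only as fixed $\mathrm{L}^2$ data affecting the choice of $M$ but not the $\delta^{1/2}$-smallness of the contraction constant, is precisely what is needed.
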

	\begin{proof}
		From the conditions (I)-(VIII), it suffices to show that the system \eqref{7a}-\eqref{7b}, with $\ h,\  \u_{\tau}$ be as defined in Lemma \ref{Lemma7a}, has a solution $$(\v_{\tau},k_{\tau}) \in \mathrm{H}^{1}(0,\delta;\mathbb{H}_{0}^{1}(\mathcal{O}) \cap \mathbb{H}^{2}(\mathcal{O})) \times \mathrm{L}^{2}(0,\delta),$$ for some $\delta\in (0,T-\tau]$. We define a space
		\begin{align*}
		\mathbf{V}(\delta,M):=&\bigg\{(\tilde{\v},\tilde{k}) \in  \mathrm{H}^{1}(0,\delta;\mathbb{H}_{0}^{1}(\mathcal{O}) \cap \mathbb{H}^{2}(\mathcal{O})) \times \mathrm{L}^{2}(0,\delta):
		\nabla \cdot \tilde{\v}=0, \ \  \mbox{ in } \ \mathcal{O} \times (0,\delta),\\
		&\quad\tilde {\v}=\mathbf{0}, \  \mbox{ on } \ \partial \mathcal{O} \times [0,\delta),\  
		\tilde{\v}=\u_{\tau}, \    \mbox{ in } \ \mathcal{O} \times \{0\} \ \ 
		\mbox{and} \nonumber\\&\quad  \|\tilde{\v}\|_{\mathrm{H}^{1}(0,\delta;\mathbb{H}_{0}^{1}(\mathcal{O}) \cap \mathbb{H}^{2}(\mathcal{O}))} +\|\tilde{k}\|_{\mathrm{L}^{2}(0,\delta)} \leq M  \bigg\},
		\end{align*}
		where $M$ is a positive constant, which will be determined later. We also define the mapping $ \Gamma \colon \mathbf{V}(\delta,M) \to \mathbf{V}(\delta,M)$ such that
		$(\tilde{\v},\tilde{k}) \mapsto (\v,k)$ 
		through \\
		\begin{equation}\label{8.1}
		\left\{
		\begin{aligned}
		\partial_{t}{\v_{\tau}}-\mu_{1} \partial_{t} \Delta \v_{\tau}-\mu_{0} \Delta \v_{\tau}- k_{\tau}\Delta \u_{0}& -\int_{0}^{t}k_{\tau}(t-s)\Delta \hat{\v}(s)\d s-\int_{0}^{t}\hat{k}(t-s)\Delta \tilde{\v}(s)\d s\\+\big(\u_{\infty}\cdot \nabla\big)\tilde{\v}+h(t)&=\mathbf{0}, \ \ \mbox{ in } \ \mathcal{O} \times [0,\delta], \\
		\nabla \cdot \v_{\tau}&=0, \ \  \mbox{ in } \ \mathcal{O} \times [0,\delta], \\
		\v_{\tau}&=\mathbf{0}, \ \  \mbox{ on } \ \partial \mathcal{O} \times [0,\delta], \\
		\v_{\tau}&=\u_{\tau}, \  \mbox{ in } \ \mathcal{O} \times \{0\},
		\end{aligned}
		\right.
		\end{equation}
		and
		\begin{equation}\label{8.2}
		\left\{
		\begin{aligned}
		k_{\tau}(t)&=\alpha\bigg[r_{\tau}''(t)-\bigg\{\mu_{0}\int_{\mathcal{O}} \tilde{\v}\cdot\Delta\varphi\d x +\int_{0}^{t}\int_{\mathcal{O}}\tilde{k}(t-s)\hat{\v}\cdot\Delta\varphi\d x\d s\\&\quad+\int_{0}^{t}\int_{\mathcal{O}}\hat{k}(t-s) \tilde{\v}\cdot\Delta\varphi\d x\d s+\int_{\mathcal{O}}\big((\u_{\infty}\cdot \nabla)\varphi\big)\cdot\tilde{\v}\d x\\&\quad+\int_{t}^{\tau}\int_{\mathcal{O}} \hat{k}(\tau+t-s)\hat{\v} \cdot \Delta\varphi\d x \d s\bigg\}\bigg], \ \ \ t\in [0,\delta].
		\end{aligned}
		\right.
		\end{equation}
		We just need to show that the map  $ \Gamma \colon \mathbf{V}(\delta,M) \to \mathbf{V}(\delta,M)$ is a contraction map.
		The proof constitutes the similar arguments as given in the proof of Theorem \ref{thm1}.
	\end{proof}
	\begin{lemma}\label{Lemma 8b}
		Let $T \in \mathbb{R}^{+}, \ \tau \in (0,T].$ Let  $\beta_{1},\beta_{2} \in \mathbb{R}^{+}$ and $\v \in \mathrm{H}^{1}(0,\tau;\mathbb{H}_{0}^{1}(\mathcal{O}) \cap \mathbb{H}^{2}(\mathcal{O}))$ be such that, $\mbox{for all} \  t \in [0,\tau],$
		\begin{align}\label{8.3}
		\|\v\|_{\mathrm{H}^{1}(0,t;\mathbb{H}_{0}^{1}(\mathcal{O}) \cap \mathbb{H}^{2}(\mathcal{O}))}^{2} \leq \beta_{1}+ \beta_{2}\|\v\|_{\mathrm{L}^{2}(0,t;\mathbb{H}_{0}^{1}(\mathcal{O}))}^{2}.
		\end{align}
		Then, 
		\begin{align}\label{8.4}
		\|\v\|_{\mathrm{H}^{1}(0,t;\mathbb{H}_{0}^{1}(\mathcal{O}) \cap \mathbb{H}^{2}(\mathcal{O}))} \leq C\big( \beta_{1}, \beta_{2}, \|\v(0)\|_{\mathbb{L}^{2}(\mathcal{O})}, T\big),
		\end{align}
		where $ C\big( \beta_{1}, \beta_{2}, \|\v(0)\|_{\mathbb{L}^{2}(\mathcal{O})}, T\big) \in \mathbb{R}^{+}. $
	\end{lemma}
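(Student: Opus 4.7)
The plan is to turn the hypothesis \eqref{8.3} into an integral inequality of Gronwall type for the quantity
\[
F(t):=\|\v\|_{\mathrm{H}^{1}(0,t;\mathbb{H}_{0}^{1}(\mathcal{O}) \cap \mathbb{H}^{2}(\mathcal{O}))}^{2},
\]
and then close the argument. The subtlety is that the right-hand side of \eqref{8.3} involves the $\mathbb{H}^{1}_{0}$-norm of $\v$, which sits between $\mathbb{L}^{2}$ and $\mathbb{H}^{2}$, so a naive application of the hypothesis produces a circular inequality. I will break that circularity by interpolating $\mathbb{H}^{1}_{0}$ between $\mathbb{L}^{2}$ and $\mathbb{H}^{2}$, which lets the $\mathbb{H}^{2}$ part be absorbed into $F(t)$ while leaving only an $\mathbb{L}^{2}$-contribution that is controlled by $\|\v(0)\|_{\mathbb{L}^{2}(\mathcal{O})}$ and $\partial_{t}\v$.

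\textbf{Step 1 (interpolation).} By Lemma \ref{G-N2} (or direct interpolation), for every $\varepsilon>0$ there is a constant $C_{\varepsilon}>0$ with
\[
\|\v(s)\|_{\mathbb{H}_{0}^{1}(\mathcal{O})}^{2}\le \varepsilon\,\|\v(s)\|_{\mathbb{H}^{2}(\mathcal{O})}^{2}+C_{\varepsilon}\,\|\v(s)\|_{\mathbb{L}^{2}(\mathcal{O})}^{2},
\]
so that, after integrating in $s$ over $[0,t]$,
\[
\|\v\|_{\mathrm{L}^{2}(0,t;\mathbb{H}^{1}_{0}(\mathcal{O}))}^{2}\le \varepsilon\,F(t)+C_{\varepsilon}\int_{0}^{t}\|\v(s)\|_{\mathbb{L}^{2}(\mathcal{O})}^{2}\,\d s.
\]

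\textbf{Step 2 (fundamental theorem of calculus in $\mathbb{L}^{2}$).} Writing $\v(s)=\v(0)+\int_{0}^{s}\partial_{r}\v(r)\,\d r$ and applying Cauchy--Schwarz in time gives
\[
\|\v(s)\|_{\mathbb{L}^{2}(\mathcal{O})}^{2}\le 2\|\v(0)\|_{\mathbb{L}^{2}(\mathcal{O})}^{2}+2s\int_{0}^{s}\|\partial_{r}\v(r)\|_{\mathbb{L}^{2}(\mathcal{O})}^{2}\,\d r\le 2\|\v(0)\|_{\mathbb{L}^{2}(\mathcal{O})}^{2}+2sF(s),
\]
since $\|\partial_{r}\v\|_{\mathrm{L}^{2}(0,s;\mathbb{L}^{2})}^{2}\le F(s)$. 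Integrating once more in $s$,
\[
\int_{0}^{t}\|\v(s)\|_{\mathbb{L}^{2}(\mathcal{O})}^{2}\,\d s\le 2T\|\v(0)\|_{\mathbb{L}^{2}(\mathcal{O})}^{2}+2\int_{0}^{t}sF(s)\,\d s.
\]

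\textbf{Step 3 (closing the loop and Gronwall).} Feeding the two estimates into the hypothesis \eqref{8.3} yields
\[
F(t)\le \beta_{1}+\beta_{2}\varepsilon\,F(t)+2\beta_{2}C_{\varepsilon}T\|\v(0)\|_{\mathbb{L}^{2}(\mathcal{O})}^{2}+2\beta_{2}C_{\varepsilon}\int_{0}^{t}sF(s)\,\d s.
\]
Choosing $\varepsilon=1/(2\beta_{2})$ so that the term $\beta_{2}\varepsilon F(t)$ may be absorbed into the left-hand side, I obtain
\[
F(t)\le 2\beta_{1}+4\beta_{2}C_{\varepsilon}T\|\v(0)\|_{\mathbb{L}^{2}(\mathcal{O})}^{2}+4\beta_{2}C_{\varepsilon}\int_{0}^{t}sF(s)\,\d s,
\]
and the standard integral form of Gronwall's inequality produces
\[
F(t)\le \bigl(2\beta_{1}+4\beta_{2}C_{\varepsilon}T\|\v(0)\|_{\mathbb{L}^{2}(\mathcal{O})}^{2}\bigr)\exp\bigl(2\beta_{2}C_{\varepsilon}T^{2}\bigr),
\]
which is \eqref{8.4} with the stated dependence of the constant.

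The main (mild) obstacle is the circularity alluded to above: without interpolation, the hypothesis \eqref{8.3} only yields a bound when $\beta_{2}$ is small. The key trick is therefore Step 1, which converts the $\mathbb{H}^{1}_{0}$-norm into an $\mathbb{H}^{2}$-piece (absorbable into $F$) plus an $\mathbb{L}^{2}$-piece that can be controlled using only $\|\v(0)\|_{\mathbb{L}^{2}}$ via the fundamental theorem of calculus. Once this is set up, the Gronwall step is routine.
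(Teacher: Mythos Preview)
Your proof is correct and follows essentially the same approach as the paper: interpolate the $\mathbb{H}^{1}_{0}$-norm between $\mathbb{L}^{2}$ and $\mathbb{H}^{2}$ via Gagliardo--Nirenberg/Young, absorb the $\mathbb{H}^{2}$-piece, control the $\mathbb{L}^{2}$-piece through the fundamental theorem of calculus, and finish with Gronwall. The only cosmetic difference is that the paper applies Gronwall to $z(t)=\|\partial_{t}\v\|_{\mathrm{L}^{2}(0,t;\mathbb{L}^{2}(\mathcal{O}))}^{2}$ and then substitutes back into the analogue of your absorbed inequality, whereas you Gronwall directly on $F(t)$; both are equivalent.
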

	\begin{proof}
		Applying Gagliardo-Nirenberg's and Young's inequalities, we obtain, $\mbox{for all} \  \varepsilon \in \mathbb{R}^{+}$,
		\begin{align}\label{8.5}
		\|\v\|_{\mathrm{L}^{2}(0,t;\mathbb{H}_{0}^{1}(\mathcal{O}))}^{2} \nonumber&\leq C \|\v\|_{\mathrm{L}^{2}(0,t;\mathbb{L}^{2}(\mathcal{O}))}\|\v\|_{\mathrm{L}^{2}(0,t;\mathbb{H}_{0}^{1} (\mathcal{O})\cap \mathbb{H}^{2}(\mathcal{O}))} \\
		&\leq C\big(\varepsilon\|\v\|_{\mathrm{L}^{2}(0,t;\mathbb{H}_{0}^{1} (\mathcal{O})\cap \mathbb{H}^{2}(\mathcal{O}))}^{2}+C(\varepsilon)\|\v\|_{\mathrm{L}^{2}(0,t;\mathbb{L}^{2}(\mathcal{O}))}^{2}\big).
		\end{align}
		Substituting \eqref{8.5} in \eqref{8.3}, we have
		\begin{align}\label{8.6}
		\|\v\|_{\mathrm{H}^{1}(0,t;\mathbb{H}_{0}^{1}(\mathcal{O}) \cap \mathbb{H}^{2}(\mathcal{O}))}^{2} \leq \beta_{1}+\beta_{2}C \varepsilon \|\v\|_{\mathrm{L}^{2}(0,t;\mathbb{H}_{0}^{1} (\mathcal{O})\cap \mathbb{H}^{2}(\mathcal{O}))}^{2}+\beta_{2}C(\varepsilon)\|\v\|_{\mathrm{L}^{2}(0,t;\mathbb{L}^{2}(\mathcal{O}))}^{2},
		\end{align}
		for every $\varepsilon \in \mathbb{R}^{+}$. Choosing $\varepsilon$ in such a way that $\beta_{2}C\varepsilon \leq 1/2$, we get
		\begin{align}\label{8.7}
		\|\v\|_{\mathrm{H}^{1}(0,t;\mathbb{H}_{0}^{1}(\mathcal{O}) \cap \mathbb{H}^{2}(\mathcal{O}))}^{2} \leq 2\beta_{1}+C(\beta_{2})\|\v\|_{\mathrm{L}^{2}(0,t;\mathbb{L}^{2}(\mathcal{O}))}^{2}.
		\end{align}
		Since 	$\v \in \mathrm{H}^{1}(0,\tau;\mathbb{H}_{0}^{1}(\mathcal{O}) \cap \mathbb{H}^{2}(\mathcal{O}))$, $\v$ is absolutely continuous (see Sec. 5.9, Theorem 2, \cite{LCE}). Hence, $\mbox{for all} \ s \in (0,\tau]$,
		\begin{align*}
		\v(s)&=\v(0)+\int_{0}^{s}\partial_{t}\v(s_{1})\d s_{1},\\
		\|\v(s)\|_{\mathbb{L}^{2}(\mathcal{O})}^{2}&\leq  2\|\v(0)\|_{\mathbb{L}^{2}(\mathcal{O})}^{2}+2\left\|\int_{0}^{s}\partial_{t}\v(s_{1})\d s_{1}\right\|_{\mathbb{L}^{2}(\mathcal{O})}^{2}.
		\end{align*}
		Integrating the above estimate from $0$ to $t$, we obtain
		\begin{align}\label{8.8}
		\|\v\|_{\mathrm{L}^{2}(0,t;\mathbb{L}^{2}(\mathcal{O}))}^{2} \nonumber&\leq 2 t\|\v(0)\|^{2}_{\mathbb{L}^{2}(\mathcal{O})}+2\int_{0}^{t}\left\|\int_{0}^{s}\partial_{t}\v(s_{1})\d s_{1}\right\|_{\mathbb{L}^{2}(\mathcal{O})}^{2} \d s \\
		&\leq 2 t\|\v(0)\|^{2}_{\mathbb{L}^{2}(\mathcal{O})}+2\int_{0}^{t}s\|\partial_{t}\v\|_{\mathrm{L}^{2}(0,s;\mathbb{L}^{2}(\mathcal{O}))}^{2} \d s. 
		\end{align}
		If we set $z(t):=	\|\partial_{t}\v\|_{\mathrm{L}^{2}(0,t;\mathbb{L}^{2}(\mathcal{O}))}^{2}$, from \eqref{8.7}, we get, $ \mbox{for all} \  t \in [0,\tau]$,
		\begin{align}\label{8.9}
		z(t) \leq 2\beta_{1}+
		C(\beta_{2})t\|\v(0)\|^{2}_{\mathbb{L}^{2}(\mathcal{O})}+C(\beta_{2})\int_{0}^{t} sz(s)\d s .
		\end{align}
		An application of Gronwall's inequality in \eqref{8.9} yields
		\begin{align}\label{8.10}
		z(t) \leq C\big(\beta_{1}, \beta_{2}, \|\v(0)\|_{\mathbb{L}^{2}(\mathcal{O})}, T\big). 
		\end{align}
		Using \eqref{8.10} in \eqref{8.8}, $ \mbox{for all} \ t \in (0,\tau]$, we have
		\begin{align}\label{8.11}
		\|\v\|_{\mathrm{L}^{2}(0,t;\mathbb{L}^{2}(\mathcal{O}))}^{2}
		&\leq 2t\|\v(0)\|_{\mathbb{L}^{2}(\mathcal{O})}^{2}\nonumber+2\int_{0}^{t} s C\big(\beta_{1}, \beta_{2}, \|\v(0)\|_{\mathbb{L}^{2}(\mathcal{O})}, T\big)\d s\\
		&\leq 2t\|\v(0)\|_{\mathbb{L}^{2}(\mathcal{O})}^{2}+t^{2} C\big(\beta_{1}, \beta_{2}, \|\v(0)\|_{\mathbb{L}^{2}(\mathcal{O})}, T\big). 
		\end{align}
		From \eqref{8.7} and \eqref{8.11}, we conclude that
		\begin{align*}
		\|\v\|_{\mathrm{H}^{1}(0,t;\mathbb{H}_{0}^{1}(\mathcal{O}) \cap \mathbb{H}^{2}(\mathcal{O}))} \leq C\big(\beta_{1}, \beta_{2}, \|\v(0)\|_{\mathbb{L}^{2}(\mathcal{O})}, T\big),
		\end{align*}
		which completes the proof.
	\end{proof}
	Let us now provide an a-priori estimate on $(\v,k)$ defined by \eqref{7c}. Such an estimate is of crucial importance in the proof of Theorem \ref{Thm3}.
	\begin{lemma}\label{Lemma 8c}
		Let the	assumptions $(H1)$-$(H5)$ hold. Let $$(\hat{\v},\hat{k})\in \mathrm{H}^{1}(0,\tau;\mathbb{H}_{0}^{1}(\mathcal{O}) \cap \mathbb{H}^{2}(\mathcal{O})) \times \mathrm{L}^{2}(0,\tau)$$
		be a solution of the system \eqref{5.9}-\eqref{5.13} in $[0,\tau]$ with $0<\tau<T$. If $$(\v,k)\in \mathrm{H}^{1}(0,\tau +\delta; \mathbb{H}_{0}^{1}(\mathcal{O})\cap \mathbb{H}^{2}(\mathcal{O}))\times \mathrm{L}^{2}(0,\tau +\delta)$$ is a solution of the system \eqref{5.9}-\eqref{5.13} in $[0,\tau+\delta]$. Then, we can bound  $(\v,k)$ and there exists $C>0$, such that, $\mbox{ for all }  \delta \in (0,\tau\land (T-\tau)]$,
		$$\|\v\|_{\mathrm{H}^{1}(0,\tau+\delta;\mathbb{H}_{0}^{1}(\mathcal{O})\cap \mathbb{H}^{2}(\mathcal{O}))}+\|k\|_{\mathrm{L}^{2}(0,\tau+\delta)}\leq C.$$
	\end{lemma}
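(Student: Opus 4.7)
The plan is to combine an a-priori bound on the memory kernel $k$ obtained from \eqref{5.13} with energy estimates for $\v$ arising from \eqref{5.9}, and then invoke Lemma \ref{Lemma 8b} to close the bound. The critical feature making this possible is that the Oseen problem is \emph{linear} in $\v$ (no convective nonlinearity $(\v\cdot\nabla)\v$), so the resulting inequalities are at worst quadratic in $\v$ and no blow-up is forced.

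\textbf{Step 1 (bound on $k$ in terms of $\v$).} Starting from \eqref{5.13}, apply absolute values, square, and integrate in $t$. The linear terms give
\begin{align*}
\|k\|_{\mathrm{L}^{2}(0,t)}^{2} \le C\bigl(\|r''\|_{\mathrm{H}^{2}(0,T)}^{2} + \|\v\|_{\mathrm{L}^{2}(0,t;\mathbb{L}^{2}(\mathcal{O}))}^{2}\bigr),
\end{align*}
plus one convolution term $\int_{0}^{t}k(t-s)\int_{\mathcal{O}}\v\cdot\Delta\varphi\,\d x\,\d s$, which by Cauchy--Schwarz is bounded pointwise by $\|\Delta\varphi\|_{\mathbb{L}^{2}}\,\|k\|_{\mathrm{L}^{2}(0,t)}\,\|\v\|_{\mathrm{L}^{2}(0,t;\mathbb{L}^{2})}$. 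This yields a Volterra-type inequality for $K(t):=\|k\|_{\mathrm{L}^{2}(0,t)}^{2}$ of the form $K(t)\le A(t)+C\int_{0}^{t}V(s)K(s)\,\d s$ with $V(s)=\|\v\|_{\mathrm{L}^{2}(0,s;\mathbb{L}^{2})}^{2}$. Applying Gr\"onwall's lemma (on a fixed interval $[0,T]$) produces
\begin{align*}
\|k\|_{\mathrm{L}^{2}(0,t)}^{2} \le C_{1}\bigl(1 + \|\v\|_{\mathrm{L}^{2}(0,t;\mathbb{H}_{0}^{1}(\mathcal{O}))}^{2}\bigr),
\end{align*}
where $C_{1}$ depends on the data and on $T$ only.

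\textbf{Step 2 (energy estimates for $\v$).} Following the strategy already used in the proof of Theorem \ref{Thm2}, I test equation \eqref{5.9} successively against $\v$, $\partial_{t}\v$, $-\Delta\v$, and $-\Delta\partial_{t}\v$. The pressure term $\nabla\partial_{t}p$ either drops by the divergence-free condition or is controlled through the elliptic identity $\partial_{t}p=(-\Delta)^{-1}\nabla\cdot[(\u_{\infty}\cdot\nabla)\v+\dots]$. The Oseen term $((\u_{\infty}\cdot\nabla)\v,\v)$ vanishes, while the remaining Oseen contributions are dominated using $\|\u_{\infty}\|_{\mathbb{L}^{\infty}}\le C\|\u_{\infty}\|_{\mathbb{H}^{2}}$. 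The memory terms $k\Delta\u_{0}$ and $k\ast\Delta\v$ are handled by Cauchy--Schwarz together with Lemma \ref{Lemma1}. Summing the four resulting identities and absorbing the viscous dissipation yields
\begin{align*}
\|\v\|_{\mathrm{H}^{1}(0,t;\mathbb{H}_{0}^{1}(\mathcal{O})\cap\mathbb{H}^{2}(\mathcal{O}))}^{2} \le C_{2}\bigl(1+\|k\|_{\mathrm{L}^{2}(0,t)}^{2}\bigr)+C_{3}\|\v\|_{\mathrm{L}^{2}(0,t;\mathbb{H}_{0}^{1}(\mathcal{O}))}^{2},
\end{align*}
for all $t\in[0,\tau+\delta]$, with constants independent of $\delta$.

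\textbf{Step 3 (closure via Lemma \ref{Lemma 8b}).} Substituting the bound on $\|k\|_{\mathrm{L}^{2}(0,t)}^{2}$ from Step 1 into the estimate of Step 2 gives an inequality of the exact shape required by Lemma \ref{Lemma 8b}:
\begin{align*}
\|\v\|_{\mathrm{H}^{1}(0,t;\mathbb{H}_{0}^{1}(\mathcal{O})\cap\mathbb{H}^{2}(\mathcal{O}))}^{2} \le \beta_{1}+\beta_{2}\|\v\|_{\mathrm{L}^{2}(0,t;\mathbb{H}_{0}^{1}(\mathcal{O}))}^{2},
\end{align*}
with $\beta_{1},\beta_{2}$ depending only on $\|\u_{0}\|_{\mathbb{H}^{2}},\|\v_{0}\|_{\mathbb{H}_{0}^{1}\cap\mathbb{H}^{2}},\|\varphi\|_{\mathbb{H}^{2}},\|\u_{\infty}\|_{\mathbb{H}^{2}},\|r\|_{\mathrm{H}^{2}(0,T)}$ and $T$, but not on $\delta$. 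Invoking Lemma \ref{Lemma 8b} with $\v(0)=\v_{0}$ produces a bound $\|\v\|_{\mathrm{H}^{1}(0,\tau+\delta;\mathbb{H}_{0}^{1}\cap\mathbb{H}^{2})}\le C$, and feeding this back into Step 1 delivers the companion bound on $\|k\|_{\mathrm{L}^{2}(0,\tau+\delta)}$.

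The main obstacle is the bookkeeping with the two nested convolutions: the kernel $k$ appears once as a multiplicative factor in \eqref{5.13} against $\v$, and once as a Volterra kernel against $\Delta\v$ in \eqref{5.9}. Making the Gr\"onwall step close without creating a $\delta$-dependent constant requires that, at every estimate of a convolution, one uses Young's/H\"older's inequality in the form $\|k\ast f\|_{\mathrm{L}^{2}(0,t;\mathbb{X})}\le t^{1/2}\|k\|_{\mathrm{L}^{2}(0,t)}\|f\|_{\mathrm{L}^{2}(0,t;\mathbb{X})}$ and then bound $t^{1/2}\le T^{1/2}$, rather than attempting to absorb by choosing $t$ small. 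This is precisely the step that would fail for the full Kelvin-Voigt system, where the term $(\v\cdot\nabla)\v$ would introduce a genuinely super-quadratic contribution and break the closure.
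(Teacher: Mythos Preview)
Your argument has a genuine gap: in both Step~1 and Step~2 you treat the convolution $k\ast(\cdot)$ as if it were linear in one unknown, but on the full interval $[0,\tau+\delta]$ it is \emph{bilinear} in the pair $(k,\v)$. Concretely, in Step~1 your Volterra inequality $K(t)\le A(t)+C\int_{0}^{t}V(s)K(s)\,\d s$ has $V(s)=\|\v\|_{\mathrm{L}^{2}(0,s;\mathbb{L}^{2})}^{2}$, so Gr\"onwall yields $K(t)\le A(t)\exp\bigl(C\int_{0}^{t}V(s)\,\d s\bigr)$, and the exponential factor depends on $\|\v\|_{\mathrm{L}^{2}(0,\tau+\delta;\mathbb{L}^{2})}$, the very quantity you are trying to control. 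Your constant $C_{1}$ therefore cannot depend ``on the data and $T$ only''. The same circularity hits Step~2: testing \eqref{5.9} against $-\Delta\v$ (or $-\Delta\partial_{t}\v$) and estimating $k\ast\Delta\v$ via Lemma~\ref{Lemma1} produces $t\,\|k\|_{\mathrm{L}^{2}(0,t)}^{2}\,\|\v\|_{\mathrm{L}^{2}(0,t;\mathbb{H}^{2})}^{2}$ on the right-hand side, not the additive form $C_{2}(1+\|k\|_{\mathrm{L}^{2}}^{2})+C_{3}\|\v\|_{\mathrm{L}^{2}(\mathbb{H}_{0}^{1})}^{2}$ you claim. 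Substituting Step~1 into Step~2 then gives a quadratic (Riccati-type) inequality in the combined norm, which does not match the hypothesis of Lemma~\ref{Lemma 8b} and in principle permits finite-time blow-up.

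The paper's proof avoids this by first invoking Lemma~\ref{Lemma7a} to pass to the \emph{shifted} unknowns $\v_{\tau}(t)=\v(\tau+t)$, $k_{\tau}(t)=k(\tau+t)$ on $[0,\delta]$, where the convolution splits as
\[
k\ast\Delta\v\ \longrightarrow\ k_{\tau}\ast\Delta\hat{\v}\;+\;\hat{k}\ast\Delta\v_{\tau}\;+\;(\text{known tail in }h),
\]
with $(\hat{\v},\hat{k})$ the \emph{fixed} solution on $[0,\tau]$. Each surviving convolution is now linear in exactly one of the unknowns $(\v_{\tau},k_{\tau})$; the Gr\"onwall rates involve only $\|\hat{\v}\|$ and $\|\hat{k}\|$, which are data for this step. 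This is what makes the estimates \eqref{8.13}--\eqref{8.14} genuinely linear in $(\v_{\tau},k_{\tau})$ and allows Lemma~\ref{Lemma 8b} to close with a constant $C=C(\u_{0},\hat{\v},\hat{k},T)$ independent of $\delta$. The time-shift is not bookkeeping; it is precisely the linearization device that your direct approach lacks.
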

	\begin{proof}
		Owing to Lemma \ref{Lemma7a} and Theorem \ref{Thm2}, if we set
		\begin{align*}
		\v_{\tau}(t):=\v(\tau+t), \ \ k_{\tau}(t):=k(\tau+t), \ t \in [0,\delta],
		\end{align*}
		we get $(\v_{\tau},k_{\tau})$ solves the system \eqref{7a}-\eqref{7b}. Let $k=\hat{k}$.
		Applying the same arguments to estimate $\v_{\tau}$ from the system \eqref{7a} as in Theorem \ref{Thm2}, we obtain the following estimate
		\begin{align}\label{8.13}
		\|\v_{\tau}\|^2_{\mathrm{H}^{1}(0,t;\mathbb{H}_{0}^{1}(\mathcal{O}) \cap \mathbb{H}^{2}(\mathcal{O}))} \leq C\big(\u_{0},\hat{k},\hat{\v},T\big)\big[\|\u_{\tau}\|^2_{\mathbb{H}_{0}^{1}(\mathcal{O}) \cap \mathbb{H}^{2}(\mathcal{O})}+\|k_{\tau}\|^2_{ \mathrm{L}^{2}(0,t)}\big],
		\end{align}
		$\mbox{ for all} \ t \in [0,\delta]$. Similarly, we estimate $k_{\tau}$ from the equation \eqref{7b}, we find
		\begin{align}\label{8.14}
		\|k_{\tau}\|_{ \mathrm{L}^{2}(0,t)}^2 &\leq C(\hat{\v},\hat{k},T)\big[\|r_{\tau}''\|_{ \mathrm{L}^{2}(0,t)}^2 +	\|\v_{\tau}\|_{\mathrm{L}^{2}(0,t;\mathbb{H}_{0}^{1}(\mathcal{O}))}^2\big]\nonumber\\&\leq C(\hat{\v},\hat{k},T)\big[\|r_{\tau}''\|_{ \mathrm{L}^{2}(0,t)}^2 +\varepsilon\|\v_{\tau}\|_{\mathrm{L}^{2}(0,t;\mathbb{H}_{0}^{1}(\mathcal{O})\cap\mathbb{H}^2(\mathcal{O}))}^2+	C(\varepsilon)\|\v_{\tau}\|_{\mathrm{L}^{2}(0,t;\mathbb{H}_{0}^{1}(\mathcal{O}))}^2\big].
		\end{align}
		Substituting \eqref{8.14} in \eqref{8.13}, $\mbox{ for all} \ t \in [0,\delta]$, we obtain
		\begin{align}\label{8.15}
		\|\v_{\tau}\|^2_{\mathrm{H}^{1}(0,t;\mathbb{H}_{0}^{1}(\mathcal{O}) \cap \mathbb{H}^{2}(\mathcal{O}))} \nonumber&\leq C\big(\u_{0},\hat{k},\hat{\v},T\big)\big[\|\u_{\tau}\|^2_{\mathbb{H}_{0}^{1}(\mathcal{O}) \cap \mathbb{H}^{2}(\mathcal{O})}+\|r_{\tau}''\|^2_{ \mathrm{L}^{2}(0,t)}\\&\quad+\varepsilon\|\v_{\tau}\|_{\mathrm{L}^{2}(0,t;\mathbb{H}_{0}^{1}(\mathcal{O})\cap\mathbb{H}^2(\mathcal{O}))}^2+	C(\varepsilon)\|\v_{\tau}\|_{\mathrm{L}^{2}(0,t;\mathbb{H}_{0}^{1}(\mathcal{O}))}^2\big].
		\end{align}
		Choosing $\varepsilon$ to be sufficiently small, so that $\varepsilon C\big(\u_{0},\hat{k},\hat{\v},T\big)\leq 1/2,$ from \eqref{8.15}, we obtain
		\begin{align}\label{8.15*}
		\|\v_{\tau}\|^2_{\mathrm{H}^{1}(0,t;\mathbb{H}_{0}^{1}(\mathcal{O}) \cap \mathbb{H}^{2}(\mathcal{O}))} &\leq C\big(\u_{0},\hat{k},\hat{\v},T\big)\big[\|\u_{\tau}\|^2_{\mathbb{H}_{0}^{1}(\mathcal{O}) \cap \mathbb{H}^{2}(\mathcal{O})}+\|r_{\tau}''\|^2_{ \mathrm{L}^{2}(0,t)}+\|\v_{\tau}\|_{\mathrm{L}^{2}(0,t;\mathbb{H}_{0}^{1}(\mathcal{O}))}^{2}\big].
		\end{align}
		Thus, we conclude from Lemma \ref{Lemma 8b}, \eqref{8.15*} and \eqref{8.14} that
		$$\|\v\|_{\mathrm{H}^{1}(0,\tau+\delta;\mathbb{H}_{0}^{1}(\mathcal{O})\cap \mathbb{H}^{2}(\mathcal{O}))}+\|k\|_{\mathrm{L}^{2}(0,\tau+\delta)}\leq C,$$ which completes the proof.
	\end{proof}
	Let us now prove Theorem \ref{Thm3}.
	\begin{proof}[Proof of Theorem \ref{Thm3}]
		Global in time uniqueness follows from Theorem \ref{Thm2}. It remains to prove existence. For this, having Theorem \ref{Equi-form1}, it suffices to show the existence of solution to the system \eqref{5.9}-\eqref{5.13}, say $(\v,k) \in \mathrm{H}^{1}(0,
		T;\mathbb{H}_{0}^{1}(\mathcal{O}) \cap \mathbb{H}^{2}(\mathcal{O})) \times \mathrm{L}^{2}(0,T)$. To obtain the same, let us set
		$$\mathfrak{T}:=\bigg\{\tau\in(0,T] : (\v,k)\in \mathrm{H}^{1}(0,\tau;\mathbb{H}_{0}^{1}(\mathcal{O}) \cap \mathbb{H}^{2}(\mathcal{O})) \times \mathrm{L}^{2}(0,\tau) \ \mbox{solves the system} \mbox{\eqref{5.9}-\eqref{5.13}} \bigg\}.$$
		Obviously $\mathfrak{T} \neq \emptyset$ from Theorem \ref{Thm1}.
		Owing to Theorem \ref{Thm2}, for all $\tau\in \mathfrak{T}$, the solution is unique and will be denoted by $(\v_{\tau},k_{\tau})$.
		Let  $\tau_{1}$ and $\tau_{2}$ be any two elements of $\mathfrak{T}$ such that $\tau_{1} \leq\tau_{2}$. Then, we have $\v_{{\tau}_{2}}$ and  $k_{{\tau}_{2}}$ is an extension of $\v_{{\tau}_{1}}$ and  $k_{{\tau}_{1}}$, respectively.
		Set $T':= \sup(\mathfrak{T}).$ Of course, $0<T'\leq T$. We need to prove $T'=T$ for the global existence. Let us define
		\begin{equation*}
		\begin{aligned}
		&\v \colon [0,T') \to \mathbb{H}_{0}^{1}(\mathcal{O})\cap \mathbb{H}^{2}(\mathcal{O}), \
		\v(t)=\v_{\tau}(t), \  \mbox{ if } \ t \leq \tau,
		\end{aligned}
		\end{equation*}
		and
		\begin{equation*}
		\begin{aligned}
		&k \colon [0,T') \to \mathbb{R},\ k(t)=k_\tau (t), \ \text{ a.e. } \mbox{ if } \ t \leq \tau.
		\end{aligned}
		\end{equation*}
		Then, $(\v,k)$ is the unique solution of the system \eqref{5.9}-\eqref{5.13} in $[0,\tau]$, $\text{for all} \ \tau \in (0,T')$. Let $T'' := T'/2$.
		Then, by Lemma \ref{Lemma 8c}, there exists $C>0$, such that, $  \text{for all} \ \tau \in [T'',T')$,
		\begin{align}\label{8.18}
		&\|\v\|_{\mathrm{H}^{1}(0,\tau;\mathbb{H}_{0}^{1}(\mathcal{O}) \cap \mathbb{H}^{2}(\mathcal{O}))}\nonumber+\|k\|_{\mathrm{L}^{2}(0,\tau)}\\
		&=\|\v\|_{\mathrm{L}^{2}(0,\tau;\mathbb{H}_{0}^{1}(\mathcal{O}) \cap \mathbb{H}^{2}(\mathcal{O}))}+\|\partial_{t}\v\|_{\mathrm{L}^{2}(0,\tau;\mathbb{H}_{0}^{1}(\mathcal{O}) \cap \mathbb{H}^{2}(\mathcal{O}))}+\|k\|_{\mathrm{L}^{2}(0,\tau)}
		\leq C.
		\end{align}
		As $\tau \to T'$ in \eqref{8.18}, we obtain
		\begin{align*}
		\|\v\|_{\mathrm{L}^{2}(0,T';\mathbb{H}_{0}^{1}(\mathcal{O}) \cap \mathbb{H}^{2}(\mathcal{O}))}+\|\partial_{t}\v\|_{\mathrm{L}^{2}(0,T';\mathbb{H}_{0}^{1}(\mathcal{O}) \cap \mathbb{H}^{2}(\mathcal{O}))}+\|k\|_{\mathrm{L}^{2}(0,T')}
		\leq C.
		\end{align*}
		Therefore, we conclude that $(\v,k)\in \mathrm{H}^{1}(0,T';\mathbb{H}_{0}^{1}(\mathcal{O}) \cap \mathbb{H}^{2}(\mathcal{O})) \times \mathrm{L}^{2}(0,T')$ and solves the system \eqref{5.9}-\eqref{5.13} in $[0,T']$. But this implies that $T'=T$.	If not, by Lemma \ref{Lemma 8a}, we could extend $(\v,k)$ to a solution with domain $[0,T'+\delta]$, but this is not compatible  with the definition of $T'$,	which completes the proof.
	\end{proof}
	
	\medskip\noindent
	{\bf Acknowledgments:} P. Kumar and M. T. Mohan would  like to thank the Department of Science and Technology (DST), India for Innovation in Science Pursuit for Inspired Research (INSPIRE) Faculty Award (IFA17-MA110).

\end{document}